\newcommand{\R}{\ensuremath{\mathbb{R}}}
\newcommand{\Z}{\ensuremath{\mathbb{Z}}}
\newcommand{\N}{\ensuremath{\mathbb{N}}}
\newcommand{\e}{\epsilon}
\newcommand{\vp}{\varphi}
\newcommand{\la}{\lambda}
\def \tn {\textnormal}
\newcommand{\df}[1]{{\color{black} \bf #1}}
\newcommand{\norma}[1]{\|\!|#1\|\!|}
\newtheorem{theorem}{Theorem}[section]
\newtheorem{lemma}[theorem]{Lemma}
\newtheorem{corollary}[theorem]{Corollary}
\newtheorem{proposition}[theorem]{Proposition}
\newtheorem{definition}[theorem]{Definition}
\newtheorem{remark}[theorem]{Remark}
\numberwithin{equation}{section}
\title[Lipschitiz perturbations of MS semigroups]{Lipschitz perturbations of Morse-Smale semigroups}
\author[M.C. Bortolan]{M.C. Bortolan}
\author[C.A.E.N. Cardoso]{C.A.E.N Cardoso}
\author[A.N. Carvlho]{A.N. Carvalho}
\author[L. Pires]{L. Pires}
\begin{document}

\begin{abstract}
In this paper we will deal with Lipschitz continuous perturbations of Morse-Smale semigroups with only equilibrium points as critical elements. We study the behavior of the structure of equilibrium points and their connections when subjected to non-differentiable perturbations. To this end we define more general notions of \emph{hyperbolicity} and \emph{transversality}, which do not require differentiability.
\end{abstract}

\maketitle

\tableofcontents

\section{Introduction}

The study of the asymptotic behavior of autonomous dynamical systems is a rich research area that is being developed since more than seven decades with deep contributions from many different authors with several papers dedicated to this theory, such as \cite{BV83,CC2011,ChepVishik94,hale,Haraux91,Lady,MST,Rob,SY,Temam,Vi92}. Within this area there is the study of {\it continuity of the structure under perturbations}; that is, the field responsible to answer the following questions: can we transport properties from a dynamical systems to another which is close - in some sense - to the first? Also, if we have properties of a family of dynamical systems which are close to a given dynamical system, can we transport the properties of the family to the ``limiting system'' ?

These questions have real value when dealing with systems describing real world phenomena, since due to approximations and the use of empirical laws, such systems are always approximations of the real problem. Hence, to be able to study the mathematical model and conclusively give informations about the real system, one must be certain that we can transport the obtained properties to the real system; that is, we need to be sure that we have some kind of ``continuity'' among the dynamical systems, if we want to give informations about the asymptotic behavior of the real system. Here, more recently, we have had several papers that treat this issue, such as \cite{aragao,aragao2,CbLR,CL1,CL09,CLR09,JG,Oliva,Raugel,Ryba}.

Before continuing, we just present simple definitions that will allow us present some results on this field. Let $(X,d_X)$ be a metric space, $C(X)$ the set of continuos maps from $X$ into itself, $\mathbb{T}=\mathbb{Z}$ or $\mathbb{R}$ and $\mathbb{T}^+=\{t\in \mathbb{T}\colon t\geqslant 0\}$.

\begin{definition} \label{def:Semigroup}
A one-parameter family $\{T(t)\colon t\in \mathbb{T}^+\}\subset C(X)$ is called an \df{autonomous dynamical system}, or simply a \df{semigroup}, in $X$ if:
\begin{itemize}
\item[\bf (i)] $T(0)x=x$ for each $x\in X$;
\item[\bf (ii)] $T(t+s)=T(t)T(s)$ for all $t,s\in \mathbb{T}^+$;
\item[\bf (iii)] the map $\mathbb{T}^+\times X\ni (t,x)\mapsto T(t)x\in X$ is continuous.
\end{itemize}

When $\mathbb{T}=\mathbb{Z}$ we say that $\{T(t)\colon t\in \mathbb{T}^+\}$ is a \df{discrete semigroup}.
\end{definition}

\begin{remark} \label{rem:DisSemigroup}
Clearly for $ \ \mathbb{T}=\mathbb{Z}$ the condition {\rm (iii)} in Definition \ref{def:Semigroup} is automatically satisfied. Also in the case of discrete semigroups, defining $T=T(1)$, we have $T(n)=(T(1))^n=T^n$ for each $n\in \mathbb{Z}^+$ by condition {\rm (ii)} and the discrete semigroup is the family of maps $\{T^n\colon n\in \mathbb{Z}^+\}$.
\end{remark}

We say that $A \subset X$ is \df{invariant} for $\{T(t)\colon t\in \mathbb{T}\}$ if $T(t)A=A$ for each $t\in \mathbb{T}^+$. Also, for $A,B\subset X$ we say that $A$ \df{attracts} $B$ under the action of $\{T(t)\colon t\in \mathbb{T}^+\}$ if
$$
{\rm dist}_H(T(t)B,A)\to 0, \hbox{ as } t\to  \infty \hbox{ in } \mathbb{T}^+,
$$
where ${\rm dist}_H(\cdot,\cdot)$ denotes the Hausdorff semidistance between two sets; that is, if $C,D\in X$ are non-empty subsets we have
$$
{\rm dist}_H(C,D) = \sup_{c\in C}\sup_{d\in D}d_X(c,d).
$$

\begin{definition}
A compact subset $\mathcal{A}$ of $X$ is a \df{global attractor} of $\{T(t)\colon t\in \mathbb{T}^+\}$ if it is invariant for $\{T(t)\colon t\in \mathbb{T}\}$ and attracts all bounded subsets of $X$ under the action of $\{T(t)\colon t\in \mathbb{T}\}$.
\end{definition}

Hence with this definition we can make more clear what we mean by {\it asymptotic behavior} for an autonomous dynamical system. It is easy to see that each semigroup $\{T(t)\colon t\in \mathbb{T}^+\}$ has at most one global attractor $\mathcal{A}$, and this global attractor attracts all the {\it orbits} $\{T(t)x\colon t\in \mathbb{T}^+\}$ for $x\in X$, hence the global attractor is the `limiting object' of all possible trajectories of our semigroup and thus the behavior of $\{T(t)\colon t\in \mathbb{T}^+\}$ for $t\to \infty$ in $\mathbb{T}^+$  is described precisely by the global attractor $\mathcal{A}$.

We will see that in fact the global attractor is more than that; that is, the global attractor also {\it contains} all possible bounded trajectories that can be defined for all $t\in \mathbb{T}^+$. To see this property we define a \df{global solution} of $\{T(t)\colon t\in \mathbb{T}^+\}$ as a continuous function $\xi\colon \mathbb{T}\to X$ such that
$$
T(t)\xi(s) = \xi(t+s), \hbox{ for all } s\in \mathbb{T} \hbox{ and } t\in \mathbb{T}^+.
$$

If $\xi(0)=x$, we say that $\xi$ is a \df{global solution through} $x$. And with these definitions, when the semigroup $\{T(t)\colon t\in \mathbb{T}\}$ has a global attractor $\mathcal{A}$, we have
$$
\mathcal{A} = \{x\in X\colon \hbox{ there exists a bounded global solution through }x\}.
$$

Hence the global attractor $\mathcal{A}$ of a semigroup is {\it the} object to study if one wants to understand the behavior of $\{T(t)\colon t\in \mathbb{T}^+\}$ as $t\to \infty$ in $\mathbb{T}^+$.

For the issue of ``continuity'' described before, if we have a family of semigroups $\{T_\eta(t)\colon t\in \mathbb{T}^+\}$ with a global attractor $\mathcal{A}_\eta$ for $\eta\in [0,1]$, there is a question quite simple to present: for a suitable convergence of $T_\eta$ to $T_0$, what kind of convergence can we expect for $\mathcal{A}_\eta$ to $\mathcal{A}_0$ as $\eta\to 0^+$?

This question has been answered in several papers throughout the years, and following their main results, we can outline a rough sketch of what kind of continuity we can obtain to the global attractors $\mathcal{A}_\eta$ as upper semicontinuity, lower semicontinuity, topological stability and geometrical stability.

In this paper, our focus will be on the {\it geometrical stability}, which is the stability of the energy levels of the invariants inside the attractor and it is obtained with great effort for Morse-Smale systems, see \cite{hmo,Palis,Oliva} for instance. More precisely, we will deal with the following problem: to achieve geometrical stability, the hypothesis of differentiability for the family of semigroups $\{T_\eta(t)\colon t\in \mathbb{T}^+\}$ always appear in the literature presented so far in this topic, but can it be obtained {\it without it}?

This question is vey important to consider, since when we are dealing with evolution equations in some $L^p$-space, there are no differentiable functions from $L^p$ into itself, other then linear functions. Hence, there is no hope to achieve differentiability of the generated semigroup. Hence, the theory of geometrical stability so far, only allows us to consider differentiable semigroups with differentiable perturbations, which is far from being a reality in $L^p$. Our main goal in this paper is to introduce definitions and results about geometrical stability for {\it Lipschitz continuous perturbations}.  We will deal with the case of {\it discrete semigroups}, and by Remark \ref{rem:DisSemigroup} it is enough to consider a map $T\in C(X)$.

In Section \ref{sec:InvManifolds} we describe preliminary results concerning Lipschitz global and local invariant manifolds near a fixed point, more specifically Theorems \ref{teoUS} and \ref{teo:LIM}, and here we stress that these results can be found in the manuscripts\footnote{Handwritten notes: ``Invariant manifolds near a fixed point'', of D. Henry} of D. Henry, and we do not claim their authorship. However, since the notation introduced in the proof is important for our work, for the sake of completeness we decided to include the proofs in our work. 

Section \ref{sec:LHP} is where we introduce the main concept of our study, the {\it $\mathfrak{L}$-hyperbolic points} (see Definition \ref{hyperbolic_fixed_point}), and to this end, we must introduce first the concept of {\it weakly hyperbolic point} (see Definition \ref{w_hyperbolic_fixed_point}). Also in the section, we state a result concerning invariant manifolds near a weakly hyperbolic point (Theorem \ref{cor:US}), which is a direct corollary of Theorem \ref{teo:LIM}, as well as study the {\it isolation} of $\mathfrak{L}$-hyperbolic points, as seen in Theorem \ref{hu1}. With this property of isolation, we can describe the behavior of solutions near a $\mathfrak{L}$-hyperbolic point, which is described in Theorem \ref{hu1*}.

To continue our study, we devote Section \ref{sec:APHP} to the autonomous perturbations of $\mathfrak{L}$-hyperbolic points, that is, we analyze the permanence of $\mathfrak{L}$-hyperbolic points when submitted to small (in the sense of Definition \ref{def:norms}) autonomous perturbations (see Theorem \ref{teo:CHP}). Section \ref{sec:APIM} has the same direction as Section \ref{sec:APHP}, but we study the permanence of invariant manifolds under small autonomous perturbations and we obtain Theorem \ref{convvarfinal*}.

In Section \ref{sec:TS} we describe the {\it topological stability} of dynamically gradient maps, which is a recent topic of research (see \cite{aragao,aragao2,CL09} for instance), and deals with the permanence of inner structures of the attractors for maps, under small autonomous perturbations.  Mainly, we use the result in these papers adapted to our case, and obtain Proposition \ref{prop:TS}.

Section \ref{sec:Trans} is devoted to the study of a new concept, the {\it $\mathfrak{L}$-transversality}, which is a replacement for the usual transversality property, and is key to obtain permanence of intersections. The main result of this section is Proposition \ref{lem6.3}.

In Section \ref{sec:LMSS} we are able to finally present the main concept of our work, see Definition \ref{WLSM},  the concept of {\it $\mathfrak{L}$-Morse-Smale maps}, and also the main result of our paper, Theorem \ref{st3}, which gives the permanence of $\mathfrak{L}$-Morse-Smale maps under small autonomous perturbations. Section \ref{sec:Ex} is devoted to present one example to illustrate all the theory developed throughout the paper. 

Finally, our work has also an appendix, divided in three sections. Section \ref{app:A} is devoted to the proofs of Proposition \ref{hu3} and Lemma \ref{convvarlipfinal_lem}, which are fairly technical and would just fog the view of the general outline of the paper. In Section \ref{app:LipschitzMani} we present the proof of Proposition \ref{prop6.9}, as well as the necessary definitions and previous results required. Lastly, Section \ref{app:Diff} is dedicated to the study of differentiability of Nemytskii operators, which is necessary to validate the importance of the theory developed in the work, once we realize that there are almost none differentiable function from $L^p(\Omega)$ into $L^p(\Omega)$, when we consider functions arising from forcing terms of differential equations, and we use the results of this section in the example of Section \ref{sec:Ex}.

\section{Invariant manifolds near a fixed point} \label{sec:InvManifolds}

Let $(X,\|\cdot\|)$ be a Banach space, $\mathcal{L}(X)$ be the set of bounded linear operators of $X$ into itself and $L\in \mathcal{L}(X)$ be a bounded linear operator such that $\sigma(L) \cap \{\xi \in \mathbb{C}\colon |\xi|=\rho\}=\varnothing$, for some $\rho>0$. If $\Gamma(t)= \rho e^{it}$, $t\in [0,2\pi]$, we have the spectral projections
$$
\pi_s = \frac{1}{2\pi i} \int_{\Gamma} (\lambda-L)^{-1}d\lambda\quad \hbox{ and } \quad \pi_u=I-\pi_s,
$$
that give us a decomposition of $X$ and $L$ as $X=X_u\oplus X_s$, $L=L_u\oplus L_s$, respectively, where $X_j=\pi_j X$ and $L_j=L|_{X_j}\colon X_j\rightarrow X_j$, for $j=u,s$. Then
\begin{equation} \label{eq:AB}
\sigma(L_s)\subset \{\lambda\in \mathbb{C}\colon |\lambda|<b\} \hbox{ and }\sigma(L_u)\subset \{\lambda\in \mathbb{C}\colon |\lambda|>a\}, \hbox{ for some } 0<b<\rho<a,
\end{equation}
and we can choose norms in $X_u$ and $X_{s}$ (which we denote the same $\|\cdot\|$) such that $\|L_s\|\leqslant b$ and $\|L_u^{-1}\|\leqslant a^{-1}$, with these norms equivalent to the norm induced by the norm of $X$. Therefore, in $X$ we use the equivalent norm (again, we denote it by $\|\cdot\|$) given by
$$
\|x_u+x_{s}\|=\max \{\|x_u\|,\|x_{s}\|\}.
$$

Also, in the Banach space $X$, for $A\subset X$ non-empty we define
$$
d(x,A)=\inf_{a\in A}\|x-a\|,
$$
the usual distance between points and sets and if $A\subset X$ and $r>0$, we denote the $r$-neighborhood of $A$ in $X$ by
$$
\mathcal{O}_r(A)=\{x\in X\colon d(x,A)<r\}.
$$
and for each $y\in X$ we denote $B_r^X(y)=\mathcal{O}_r(\{y\})$.

%In fact, if $A\in \mathcal{L}(X)$ and there exist $n\in \N$ such that $\|A^{n+1}\|\leqslant b^{n+1}$, then $|x|_X:=b^n\|x\|+b^{n-1}\|Ax\|+\ldots+\|A^nx\|$ define a norm on $X$, which is equivalent to $\|\cdot\|_X$, such that $|A|_{\mathcal{L}(X)}\leqslant b$. In particular, if $\tn{r}_{\sigma}(L_s):=\sup_{\la\in \sigma(L_s)}|\la|<b$, then $|L_s|_{\mathcal{L}(X)}\leqslant b$. Analogously to $L_u^{-1}$.

\begin{definition}
\label{def:SmallLConstant}
%Let $L\in \mathcal{L}(X)$ be a bounded operator in $X$ with $\sigma(L)\cap \{\lambda\in \mathbb{C}\colon |\lambda|=1\}=\varnothing$, and consider $0<b<1<a$ as in the description above.
Let ${L}\in \mathcal{L}(X)$ and $\rho,b,a>0$ be described as above and $U\subset X$ an open set. We say a map $N\colon U\to X$ has
%is \df{Lipschitz close to $L$}
\df{small Lipschitz constant with respect to $L$}
if there exists $\gamma>0$ such that $b+2\gamma<\rho<a-2\gamma$ and
$$
\|N(x)-N(y)\|\leqslant \gamma \|x-y\|, \hbox{ for all }x,y \in U.
$$

For any Lipschitz map $N$, ${\rm Lip}(N)$ denotes \emph{any} Lipschitz constant of $N$.
\end{definition}

\begin{definition} \label{def:EqPoint}
We say that a function $\xi\colon \mathbb{Z}\to X$ is a \df{global solution} for a map $T\in C(X)$ if $T(\xi(m))=\xi(m+1)$ for each $m\in \mathbb{Z}$.
We say that a point $x\in X$ is an \df{equilibrium point} for a map $T\in C(X)$ if $T(x)=x$.
\end{definition}

If $\xi$ is a global solution for $T$, then it is a global solution for the {\it discrete semigroup} $\{T^n\colon n\in \mathbb{N}\}$, since
$$
T^n(\xi(m))= \xi(m+n), \hbox{ for all }m\in \mathbb{Z} \hbox{ and } n\in \mathbb{N}.
$$

Clearly, if $x$ is an equilibrium point for $T$, the function $\xi(m)=x$ for all $m\in \mathbb{Z}$ is a global solution for $T$. In general, in this case, we use that words {\it equilibrium point} and {\it stationary solution} indistinctly.

%Thus the property of being $\mathfrak{L}$-hyperbolic depends only on the relationship between the parameters $a,b,\gamma$.

In the description presented in the introduction, we can see that to study continuity of attractors and to go further than upper semicontinuity, we must be able to describe and obtain properties on the invariant structures inside the global attractors. We present here a result that has this exact purpose; that is, a result that characterizes the invariant manifolds of a map $S=L+N$ with $L,N$ satisfying the Definition \ref{def:SmallLConstant}. This result can be found in the manuscripts of D. Henry and for the sake of completeness, both from the result itself and notations therein, we present the proof.

%\begin{center}
%\scalebox{1}{
%\begin{pspicture}(-6,-4.4)(6,5)
%\psset{xunit=1cm,yunit=1cm}
%%\psgrid(-6,-4.4)(6,5)
%
%\rput(0,0){\includegraphics[scale=0.7]{graph.eps}}
%
%\end{pspicture}
%}
%\end{center}

%\begin{figure}[h]
%\begin{center}
%\includegraphics[scale=0.8]{Graph}
%\caption{Invariant Manifolds}
%\end{center}
%\end{figure}

\begin{theorem}[Global invariant manifolds]
\label{teoUS}
Let $X$ be a Banach space, $L:X\rightarrow X$ a bounded linear operator, $\rho>0$ such that $\sigma(L) \cap \{\lambda \in \mathbb{C}\colon |\lambda|=\rho\}=\varnothing$. Then there exist $\gamma >0$, a decomposition $X=X_u\oplus X_{s}$ and an equivalent norm in $X$ such that if $N\colon X\rightarrow X$ is Lipschitz continuous and satisfies $N(0)=0$ and ${\rm Lip}(N)\leqslant \gamma$, there exist sets $W^{\rm s}_{\rho}$, $W^{\rm u}_{\rho}$ in $X$ such that for $S=L+N$, we have the following:
\begin{enumerate}
\item[{\bf (a)}] $W^{\rm u}_{\rho}$ is a graph of a Lipschitz map over $X_u$, $W^{\rm s}_{\rho}$ is a graph of a Lipschitz map over $X_{s}$.
\item[{\bf (b)}] $SW^{\rm u}_{\rho}=W^{\rm u}_{\rho}$, $SW^{\rm s}_{\rho}=W^{\rm s}_{\rho}\cap S(X)$ and the restriction $S|_{W^{\rm u}_{\rho}}$ is a homeomorphism.
\item[{\bf (c)}] $W^{\rm s}_{\rho}\cap W^{\rm u}_{\rho}=\{0\}$.
\item[{\bf (d)}] ${\rm Lip}\left(S|_{W^{\rm s}_{\rho}}\right)<\rho$ and ${\rm Lip}\left(S|_{W^{\rm u}_{\rho}}^{-1}\right)<\rho^{-1}$.
\item[{\bf (e)}] We have
\begin{equation*}
\begin{split}
&\quad {\bf 1.} \ W^{\rm s}_{\rho}=\{x\in X\colon \rho^{-n} \|S^nx\|\to 0 \hbox{ as }n\to \infty\},\\
&\quad {\bf 2.} \ W^{\rm u}_\rho=\{x\in X\colon \hbox{ there exists } \{x_j\}_{j\leqslant 0} \hbox{ with }x_0=x, x_{j+1}=S(x_j) \hbox{ and } \sup_{j\leqslant 0}\rho^{-j}\|x_j\|<\infty\},
\end{split}
\end{equation*}
and for $\rho^{\ast}$ sufficiently close to $\rho$, $W^{\rm s}_{\rho^{\ast}}=W^{\rm s}_{\rho}$ and $W^{\rm u}_{\rho^{\ast}}=W^{\rm u}_{\rho}$;
%\item[{\bf (f)}] There exists a continuous map $P\colon X\rightarrow W^{\rm u}_{\rho}$ with
%$$
%\|Px-x\|\leqslant Cd(x,W^{\rm u}_{\rho}),
%$$
%for some constant $C=C(\gamma,a,b)>0$ and $S\circ P= P\circ S$. If $\xi\in W^{\rm u}_{\rho}$ then $Px=\xi$ if and only if $\|S^nx-S^n\xi\|=o(\rho^n)$ as $n\to \infty$. Also $d(S^nx,W^{\rm u}_{\rho})\leqslant d\beta^n d(x,W^{\rm u}_{\rho})$ for all $n\geqslant 0$, $x\in X$ and constants $\beta=\beta(\gamma,a,b)<\rho$ and $d=d(\gamma,a,b)>0$.
\end{enumerate}

Moreover, if $L$ is an isomorphism, then for $\gamma$ sufficiently small, $S$ is a homeomorphism and $SW^{\rm s}_{\rho}=W^{\rm s}_{\rho}$.
\end{theorem}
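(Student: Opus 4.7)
The plan is to prove Theorem~\ref{teoUS} via the Lyapunov--Perron method: represent $W^{\rm s}_\rho$ and $W^{\rm u}_\rho$ as graphs of Lipschitz maps obtained as fixed points of contractions on weighted sequence spaces whose norm encodes the exponential rate $\rho$. With the decomposition $X=X_u\oplus X_s$ and the equivalent max-norm already in place, I would fix $\gamma>0$ small enough that $b+2\gamma<\rho<a-2\gamma$ \emph{and} that all the contraction and Lipschitz-in-parameter estimates below hold; then every Lipschitz $N$ with $N(0)=0$ and ${\rm Lip}(N)\le\gamma$ fits Definition~\ref{def:SmallLConstant}.

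For $W^{\rm s}_\rho$, I would work on the Banach space $\mathcal{E}^+_\rho=\{(x_n)_{n\ge 0}\subset X:\lim_n\rho^{-n}\|x_n\|=0\}$ with weighted sup-norm $\|(x_n)\|_+=\sup_n\rho^{-n}\|x_n\|$, and for each $\eta\in X_s$ seek a forward orbit of $S$ as a fixed point of the variation-of-parameters map
$$
[\Phi^+_\eta(x)]_n = L_s^n\eta + \sum_{k=0}^{n-1}L_s^{n-1-k}\pi_s N(x_k) - \sum_{k=n}^{\infty}L_u^{n-1-k}\pi_u N(x_k).
$$
Using $\|L_s^j\|\le b^j$ and $\|L_u^{-j}\|\le a^{-j}$ for $j\ge 0$, the two geometric series converge in $\mathcal{E}^+_\rho$ and yield a contraction constant $\gamma/(\rho-b)+\gamma/(a-\rho)<1$. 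The parameter-Lipschitz form of the Banach principle then produces a Lipschitz $\sigma^{\rm s}\colon X_s\to X_u$ with ${\rm Lip}(\sigma^{\rm s})\le 1$ (after possibly shrinking $\gamma$), and $W^{\rm s}_\rho$ is its graph. The construction of $W^{\rm u}_\rho$ is symmetric: on $\mathcal{E}^-_\rho=\{(x_j)_{j\le 0}:\sup_j\rho^{-j}\|x_j\|<\infty\}$, for $\eta\in X_u$, solve
$$
[\Phi^-_\eta(x)]_j = L_u^{j}\eta - \sum_{k=j}^{-1}L_u^{j-1-k}\pi_u N(x_k) + \sum_{k=-\infty}^{j-1}L_s^{j-1-k}\pi_s N(x_k),
$$
obtaining $W^{\rm u}_\rho$ as the graph of a Lipschitz $\sigma^{\rm u}\colon X_u\to X_s$ with ${\rm Lip}(\sigma^{\rm u})\le 1$. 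Claims (a), (e.1) and (e.2) are then direct consequences of these constructions and the choice of the weighted norms.

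The remaining properties follow from the graph representations together with orbit shifts. Invariance in (b) uses that time-shifting a forward orbit in $\mathcal{E}^+_\rho$ stays in $\mathcal{E}^+_\rho$, and that any $x$ with $S(x)\in W^{\rm s}_\rho$ inherits a forward orbit in $\mathcal{E}^+_\rho$, so $SW^{\rm s}_\rho=W^{\rm s}_\rho\cap S(X)$; the analogous argument, plus invertibility of $L_u$, gives $SW^{\rm u}_\rho=W^{\rm u}_\rho$, and fixed-point uniqueness ensures injectivity of $S|_{W^{\rm u}_\rho}$. For (d), ${\rm Lip}(\sigma^{\rm s}),{\rm Lip}(\sigma^{\rm u})\le 1$ makes $\eta\mapsto\eta+\sigma^{\rm s}(\eta)$ and $\eta\mapsto\eta+\sigma^{\rm u}(\eta)$ isometric for the max-norm, reducing the computation to the induced dynamics on $X_s$ and $X_u$: the map on $X_s$ is $\eta\mapsto L_s\eta+\pi_s N(\eta+\sigma^{\rm s}(\eta))$, of Lipschitz constant at most $b+2\gamma<\rho$, while on $X_u$ the expansion factor is at least $a-2\gamma>\rho$. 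This yields simultaneously ${\rm Lip}(S|_{W^{\rm s}_\rho})<\rho$, ${\rm Lip}(S|_{W^{\rm u}_\rho}^{-1})<\rho^{-1}$, and the continuity of $(S|_{W^{\rm u}_\rho})^{-1}$ needed for (b). Claim (c) is then immediate: a point in $W^{\rm s}_\rho\cap W^{\rm u}_\rho$ produces a bounded bi-infinite orbit which, because $N(0)=0$ and by uniqueness of the fixed points of $\Phi^\pm_0$, must be $0$. The stability $W^{\rm s}_{\rho^\ast}=W^{\rm s}_\rho$ for $\rho^\ast$ close to $\rho$ follows from the nesting of the weighted spaces and the uniqueness of the fixed point shared by both problems.

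For the \emph{moreover} statement, if $L$ is an isomorphism then $S(x)=y$ rewrites as $x=L^{-1}(y-N(x))$, which is a contraction in $x$ once $\gamma\|L^{-1}\|<1$; hence $S$ is a self-homeomorphism of $X$, so $S(X)=X$ and $SW^{\rm s}_\rho=W^{\rm s}_\rho\cap S(X)$ collapses to $SW^{\rm s}_\rho=W^{\rm s}_\rho$. The main technical obstacle I anticipate is the careful bookkeeping of the two Duhamel-type sums under the smallness of $\gamma$: in $\Phi^+_\eta$, for instance, the tail $\sum_{k\ge n}L_u^{n-1-k}\pi_u N(x_k)$ pairs the \emph{expanding} $L_u$ (rate $a>\rho$) against the weighted bound $\|N(x_k)\|\le\gamma\|x_k\|\le\gamma\rho^k\|x\|_+$, so both absolute convergence of the tail and the sharp estimate $\gamma/(a-\rho)<1/2$ must be extracted from precisely this interplay; the symmetric issue arises for the stable tail in $\Phi^-_\eta$ and, most subtly, in the parameter-Lipschitz estimates that give ${\rm Lip}(\sigma^{\rm s}),{\rm Lip}(\sigma^{\rm u})\le 1$.
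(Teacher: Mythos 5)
Your proposal is correct, but it proves Theorem~\ref{teoUS} by the Lyapunov--Perron method, whereas the paper uses the Hadamard graph transform: there the map $\theta\colon X_u\to X_s$ is obtained as the fixed point of $\theta\mapsto\theta_\ast$ defined implicitly by the invariance relation \eqref{ptofixoU}, a contraction on the space $\mathcal{F}$ of Lipschitz graphs with the weighted distance ${\rm d}(\theta,\tau)=\sup_{\xi\neq 0}\|\theta(\xi)-\tau(\xi)\|/\|\xi\|$. The trade-off between the two routes is the classical one. The graph transform yields (a), (d) and the explicit bound ${\rm Lip}(\theta)\leqslant \gamma/(a-b-3\gamma)$ almost immediately, but the dynamical characterizations in (e) then need a separate argument (in the paper, the monotonicity of $j\mapsto (a-2\gamma)^{-j}\|x_j\|$ and the estimate $\|\eta_{j+1}\|\leqslant (b+2\gamma)\|\eta_j\|$, which is in effect a hidden Lyapunov--Perron computation). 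Your construction reverses this: (e.1), (e.2) and the insensitivity to replacing $\rho$ by a nearby $\rho^\ast$ are built into the definition of the weighted sequence spaces, while the graph structure and item (d) come out of the parameter-Lipschitz fixed-point estimate, which gives ${\rm Lip}(\sigma^{\rm s})\leqslant \frac{\gamma/(a-\rho)}{1-\kappa}$ with $\kappa=\gamma/(\rho-b)+\gamma/(a-\rho)<1$; this is indeed $\leqslant 1$ (in fact $o(1)$ as $\gamma\to 0$) once $\gamma$ is fixed in terms of $a,b,\rho$ alone, consistent with the order of quantifiers in the statement. Your index bookkeeping in $\Phi^{\pm}_\eta$ and the constants $\gamma/(\rho-b)$, $\gamma/(a-\rho)$ check out. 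Two points to tighten in a full write-up: in (c), the bi-infinite orbit through a point of $W^{\rm s}_\rho\cap W^{\rm u}_\rho$ is bounded in the weighted norm $\sup_{n\in\mathbb{Z}}\rho^{-n}\|x_n\|$ rather than in $X$ (for $\rho>1$ the forward half need not be bounded), and it is the homogeneous bi-infinite Lyapunov--Perron operator on that space whose unique fixed point is the zero orbit --- the paper's one-line alternative $\|\xi\|\leqslant {\rm Lip}(\sigma)\,{\rm Lip}(\theta)\,\|\xi\|$ with product strictly less than one is shorter; and in the \emph{moreover} clause your contraction argument needs $\gamma\|L^{-1}\|<1$, i.e.\ a further shrinking of $\gamma$ depending on $L$, which the statement explicitly permits.
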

\begin{proof}
We have a decomposition $X=X_u\oplus X_{s}$, $L=L_u\oplus L_s$ and $a,b,\gamma>0$ satisfying \eqref{eq:AB}, and we can define the norm $\|x_u+x_{s}\|=\max\{\|x_u\|,\|x_{s}\|\}$, which is equivalent to the initial norm in $X$. In this decomposition, $S$ takes the form
\begin{equation*}
\begin{split}
S\colon &X_u\times X_{s}\rightarrow X_u\times X_{s}\\
     &(x_u,x_{s})\mapsto(L_ux_u+N_u(x_u+x_{s}),L_sx_{s}+N_s(x_u+x_{s})),
\end{split}
\end{equation*}
with ${\rm Lip}N_u, {\rm Lip}N_s \leqslant \gamma$, where $N_j=\pi_j\circ N$ and $\pi_j:X\rightarrow X_j$ is the projection of $X$ in $X_j$, $j=u,s$.

Our goal is to find $W^{\rm u}_{\rho}$ in the form $\{\xi+\theta(\xi)\colon \xi\in X_u\}$, for some Lipschitz map $\theta:X_u\rightarrow X_{s}$ with $\theta(0)=0$ and ${\rm Lip} \ \theta \leqslant 1$. The condition $SW^{\rm u}_{\rho}\subset W^{\rm u}_{\rho}$ implies that, for all $\xi \in X_u$, there exists $\hat{\xi}\in X_u$ with $S(\xi+\theta(\xi))=\hat{\xi}+\theta(\hat{\xi})$; that is,
\begin{equation}\label{ptofixoU}
\left\{
\begin{split}
&\hat{\xi}=L_u\xi+N_u(\xi+\theta(\xi))\\
&\theta_{\ast}(\hat{\xi})=L_s\theta(\xi)+N_s(\xi+\theta(\xi))
\end{split}\right.
\end{equation}
with $\theta_{\ast}=\theta$, where $\theta$ is the fixed point of the map $\theta\mapsto \theta_{\ast}$ defined by \eqref{ptofixoU}. We will show that this map has indeed a fixed point $\theta$ and then that $W^{\rm u}_{\rho}\doteq \{\xi+\theta(\xi)\colon \xi\in X_u\}$ satisfies the conditions given in this theorem. Let $\theta\colon X_u\rightarrow X_{s}$ be a Lipschitz map with $\theta(0)=0$ and ${\rm Lip} \ \theta\leqslant 1$. Then the map $\xi\mapsto -L_u^{-1}N_u(\xi+\theta(\xi))$ is a contraction in $X_u$, since
\begin{equation*}
\begin{split}
\|-L_u^{-1}N_u(\xi_1+\theta(\xi_1))+L_u^{-1}N_u(\xi_2+\theta(\xi_2))\|&\leqslant a^{-1}\gamma\left[\|\xi_1-\xi_2\|+\|\theta(\xi_1)-\theta(\xi_2)\|\right]\\
&\leqslant 2a^{-1}\gamma\|\xi_1-\xi_2\|,
\end{split}
\end{equation*}
and $2a^{-1}\gamma<1$. Therefore, for each $\hat{\xi}\in X_u$, there exists a unique $\xi \in X_u$ satisfying the first equation in \eqref{ptofixoU}, since for each $\hat{\xi}\in X_u$, the map
\begin{equation}\label{ptofixoXi}
X_u\ni \xi \mapsto L_u^{-1}\hat{\xi}-L_u^{-1}N_u(\xi+\theta(\xi))\in X_u
\end{equation}
is a contraction, hence is has a unique fixed point $\xi \in X_u$. Thus \eqref{ptofixoU} defines a map $\theta_{\ast}:X_u\rightarrow X_{s}$, and clearly $\theta_{\ast}(0)=0$. Given $\hat{\xi_1},\hat{\xi_2}\in X_u$, with $\xi_1, \xi_2\in X_u$ the correspondent fixed points of the map given in \eqref{ptofixoXi}, we have that
\begin{equation*}
\begin{split}
\|\hat{\xi_1}-\hat{\xi_2}\|&=\|L_u\xi+N_u(\xi_1+\theta(\xi_1))-L_u\xi_2-N_u(\xi_2+\theta(\xi_2))\|\\
&\geqslant \|L_u(\xi_1-\xi_2)\|-\gamma\big[\|\xi_1-\xi_2\|+\|\theta(\xi_1)-\theta(\xi_2)\|\big]\\
&\geqslant (a-2\gamma)\|\xi_1-\xi_2\|.
\end{split}
\end{equation*}

Also
\begin{equation}
\label{eq:LipTheta}
\begin{split}
\|\theta_{\ast}(\hat{\xi_1})-\theta_{\ast}(\hat{\xi_2})\|&=\|L_s\theta(\xi_1)+N_s(\xi_1+\theta(\xi_1))-L_s\theta(\xi_2)-N_s(\xi_2+\theta(\xi_2))\| \\ &\leqslant b\|\theta(\xi_1)-\theta(\xi_2)\|+\gamma\big[\|\xi_1-\xi_2\|+\|\theta(\xi_1)-\theta(\xi_2)\|\big] \\
& \leqslant (b+2\gamma)\|\xi_1-\xi_2\|,
\end{split}
\end{equation}
 therefore $\|\theta_{\ast}(\hat{\xi_1})-\theta_{\ast}(\hat{\xi_2})\|\leqslant  \frac{b+2\gamma}{a-2\gamma}\|\hat{\xi_1}-\hat{\xi_2}\|$; that is, ${\rm Lip} \ \theta_{\ast}\leqslant \frac{b+2\gamma}{a-2\gamma}<1$. The set $\mathcal{F}$ constituted of all Lipschitz maps $\theta\colon X_u\rightarrow X_{s}$ with $\theta(0)=0$, ${\rm Lip} \ \theta\leqslant 1$ with distance given by
 $$
 {\rm d}(\theta, \tau)=\sup_{\stackrel{\xi\in X_u}{\xi\neq 0}}\frac{\|\theta(\xi)-\tau(\xi)\|}{\|\xi\|},
 $$
is a complete metric space, and the map $\theta \mapsto \theta_{\ast}$ takes $\mathcal{F}$ into itself. We aim to prove now that this map is also a contraction in $\mathcal{F}$; and to this end, let $\theta, \tau$ be two maps in $\mathcal{F}$ and $\hat{\xi}\in X_u$. Define $\xi_{\theta},\xi_{\tau} \in X_u$ by $\hat{\xi}=L_u\xi_{\theta}+N_u(\xi_{\theta}+\theta(\xi_{\theta}))=L_u\xi_{\tau}+N_u(\xi_{\tau}+\tau(\xi_{\tau}))$. We have that $\|\hat{\xi}\|\geqslant (a-2\gamma)\|\xi_{\theta}\|$ and
\begin{equation*}
\begin{split}
a\|\xi_{\theta}-\xi_{\tau}\|&\leqslant \|N_u(\xi_{\theta}+\theta(\xi_{\theta}))-N_u(\xi_{\tau}+\tau(\xi_{\tau}))\| \leqslant \gamma\big[\|\xi_{\theta}-\xi_{\tau}\|+\|\theta(\xi_{\theta})-\tau(\xi_{\tau})\|\big]\\
& \leqslant \gamma\big[\|\xi_{\theta}-\xi_{\tau}\|+\|\theta(\xi_{\theta})-\tau(\xi_{\theta})\|+\|\tau(\xi_{\theta})-\tau(\xi_{\tau})\|\big]\\
& \leqslant \gamma\big[\|\xi_{\theta}-\xi_{\tau}\|+{\rm d}(\theta,\tau)\|\xi_{\theta}\|+\|\xi_{\theta}-\xi_{\tau}\|\big]\leqslant \gamma\big[2\|\xi_{\theta}-\xi_{\tau}\|+{\rm d}(\theta,\tau)\|\xi_{\theta}\|\big],
\end{split}
\end{equation*}
 thus
$$
\|\xi_{\theta}-\xi_{\tau}\|\leqslant \frac{\gamma {\rm d}(\theta,\tau)}{a-2\gamma}\|\xi_{\theta}\|\leqslant \frac{\gamma {\rm d}(\theta,\tau)}{(a-2\gamma)^2}\|\hat{\xi}\|.
$$

Moreover
\begin{equation*}
\begin{split}
\|\theta_{\ast}(\hat{\xi})-\tau_{\ast}(\hat{\xi})\|&=\|L_s\theta(\xi_{\theta})+N_s(\xi_{\theta}+\theta(\xi_{\theta}))-L_s\tau(\xi_{\tau})-N_s(\xi_{\tau}+\tau(\xi_{\tau}))\|\\
&\leqslant b\|\theta(\xi_{\theta})-\tau(\xi_{\tau})\|+\gamma\left[\|\xi_{\theta}-\xi_{\tau}\|+\|\theta(\xi_{\theta})-\tau(\xi_{\tau})\|\right]\\
&\leqslant (b+\gamma)\|\theta(\xi_{\theta})-\tau(\xi_{\tau})\| +\gamma\|\xi_{\theta}-\xi_{\tau}\|\\
& \leqslant (b+\gamma)\left[\|\theta(\xi_{\theta})-\tau(\xi_{\theta})\|+\|\tau(\xi_{\theta})-\tau(\xi_{\tau})\|\right]+\gamma\|\xi_{\theta}-\xi_{\tau}\|\\
& \leqslant (b+\gamma){\rm d}(\theta,\tau)\|\xi_{\theta}\|+(b+2\gamma)\|\xi_{\theta}-\xi_{\tau}\|\\
& \leqslant {\rm d}(\theta,\tau)\|\hat{\xi}\|\left[\frac{b+\gamma}{a-2\gamma}+\frac{\gamma}{a-2\gamma}\cdot\frac{b+2\gamma}{a-2\gamma}\right] \leqslant
 {\rm d}(\theta,\tau)\|\hat{\xi}\|\frac{b+2\gamma}{a-2\gamma},
\end{split}
\end{equation*}
which implies that ${\rm d}(\theta_{\ast},\tau_{\ast})\leqslant \frac{b+2\gamma}{a-2\gamma}{\rm d}(\theta,\tau)$  with  $\frac{b+2\gamma}{a-2\gamma}<1$, hence the map $\theta\mapsto \theta_{\ast}$ is a contraction in $\mathcal{F}$. Let $\theta$ be the fixed point of this map. Using equation \eqref{eq:LipTheta} for this fixed point, we obtain
$$
\|\theta(\hat{\xi}_1)-\theta(\hat{\xi}_2)\|\leqslant \frac{(b+\gamma){\rm Lip} \ \theta+\gamma}{a-2\gamma}\|\hat{\xi}_1-\hat{\xi}_2\|,
$$
therefor ${\rm Lip}(\theta) \leqslant \frac{(b+\gamma){\rm Lip} \ \theta+\gamma}{a-2\gamma}$, which implies that
\begin{equation}
\label{eq:LipTheta2}
{\rm Lip}( \theta) \leqslant \frac{\gamma}{a-b-3\gamma}<1.
\end{equation}

Now we define $W^{\rm u}_{\rho}=\{\xi+\theta(\xi)\colon \xi\in X_u\}$. We firstly claim that $SW^{\rm u}_{\rho}=W^{\rm u}_{\rho}$, and to show this, let $x=\xi+\theta(\xi)\in W^{\rm u}_{\rho}$. We have $Sx=\hat{\xi}+\hat{\eta}$, with $\hat{\xi}\in X_u$, $\hat{\eta} \in X_{s}$ , $\hat{\xi}=L_u\xi+N_u(\xi+\theta(\xi))$ and $\hat{\eta}=L_s\theta(\xi)+N_s(\xi+\theta(\xi))=\theta(\hat{\xi})$, by definition of $\theta$. Thus $SW^{\rm u}_{\rho}\subset W^{\rm u}_{\rho}$. On the other hand, $\hat{\xi}=L_u\xi+N_u(\xi+\theta(\xi))$ and so $S(\xi+\theta(\xi))=\hat{\xi}+\theta(\hat{\xi})$, which proves that $W^{\rm u}_{\rho}\subset SW^{\rm u}_{\rho}$ and concludes the claim.

Now, if $x=\xi+\theta(\xi)$ and $z=\zeta+\theta(\zeta)$ are points of $W^{\rm u}_{\rho}$, $Sx=\hat{\xi}+\theta(\hat{\xi})$ and $Sz=\hat{\zeta}+\theta(\hat{\zeta})$ then $\|x-z\|=\max\{\|\xi-\zeta\|,\|\theta(\xi)-\theta(\zeta)\|\}=\|\xi-\zeta\|$, since $\|\theta(\xi)-\theta(\zeta)\|\leqslant\|\xi-\zeta\|$. Also
$$
\|Sx-Sz\|=\|\hat{\xi}-\hat{\zeta}\|\geqslant (a-2\gamma)\|\xi-\zeta\|=(a-2\gamma)\|x-z\|,
$$
and $a-2\gamma>\rho$, which proves (iii) for $W^{\rm u}_\rho$; that is, ${\rm Lip}(S|_{W^{\rm u}_{\rho}}^{-1})\leqslant (a-2\gamma)^{-1}<\rho^{-1}$.

Given $x\in W^{\rm u}_{\rho}$, there exist $\{x_j\}_{j<0}$ in $W^{\rm u}_{\rho}$ such that $S^{|j|}x_j=x$ and $S(x_{j+1})=x_j$, for all $j<0$ with $x_0\doteq x$ (since $SW^{\rm u}_{\rho}=W^{\rm u}_{\rho}$). Also $
\|x_{j+1}\|=\|S(x_j)-S(0)\|\geqslant (a-2\gamma)\|x_j\|$,  and thus the map $j\mapsto \frac{\|x_j\|}{(a-2\gamma)^j}$ is increasing, since
$$
\frac{\|x_j\|}{(a-2\gamma)^j}\leqslant \frac{\|x_{j+1}\|}{(a-2\gamma)^{j+1}}, \hbox{ for all }j<0,
$$
which implies that $\frac{\|x_j\|}{(a-2\gamma)^j}\leqslant \|x\|$, and so $\|x_j\|\leqslant \|x\|(a-2\gamma)^j=o(\rho^j)$ as $j\to -\infty$. Therefore
\begin{equation*}
\begin{split}
W^{\rm u}_\rho\subset\{x\in X\colon \hbox{ there exists } &\{x_j\}_{j\leqslant 0} \hbox{ with }x_0=x, \\
				&x_{j+1}=S(x_j), \hbox{ and } \sup_{j\leqslant 0}\rho^{-j}\|x_j\|<\infty\}
\end{split}
\end{equation*}

\underline{Note:} when $\sup_{j\leqslant 0}\rho^{-j}\|x_j\|<\infty$, we say $\|x_j\|=O(\rho^j)$ as $j\to -\infty$.

Now let $x\in X$, $\{x_j\}_{j\leqslant 0}$, with $x_0=x$, such that $T(x_j)=x_{j+1}$, for all $j<0$ and $\|x_j\|=O(\rho^j)$ as $j\to -\infty$. We write $x_j=\xi_j+\theta(\xi_j)+\eta_j$, where $\xi_j\in X_u, \eta_j\in X_{s}$ and since $T(x_j)=x_{j+1}$, we have that $L_u\xi_j+N_u(\xi_j+\theta(\xi_j)+\eta_j)+L_s[\theta(\xi_j)+\eta_j]+N_s(\xi_j+\theta(\xi_j)+\eta_j) = \xi_{j+1}+\theta(\xi_{j+1})+\eta_{j+1}$; that is,
\begin{equation*}
\left\{
\begin{split}
&\xi_{j+1}=L_u\xi_j+N_u(\xi_j+\theta(\xi_j)+\eta_j)\\
&\theta(\xi_{j+1})+\eta_{j+1}=L_s(\theta(\xi_j)+\eta_j)+N_s(\xi_j+\theta(\xi_j)+\eta_j)
\end{split}\right.
\end{equation*}
and if $\hat{\xi_j}\doteq L_u\xi_j+N_u(\xi_j+\theta(\xi_j))$ then $\theta(\hat{\xi_j})=L_s\theta(\xi_j)+N_s(\xi_j+\theta(\xi_j))$. Also $\|\hat{\xi_j}-\xi_{j+1}\|\leqslant \gamma\|\eta_j\|$ and
\begin{equation*}
\begin{split}
\|\eta_{j+1}\|-\|\xi_{j+1}-\hat{\xi_j}\|&\leqslant \|\theta(\xi_{j+1})+\eta_{j+1}-\theta(\hat{\xi_j})\|\leqslant\\
&\leqslant b\|\eta_j\|+\gamma\|\eta_j\|=(b+\gamma)\|\eta_j\|,
\end{split}
\end{equation*}
which implies that $\|\eta_{j+1}\|\leqslant (b+2\gamma)\|\eta_j\|$.

Now $\|\xi_j\|\leqslant \|x_j\|=\max\{\|\xi_j\|,\|\theta(\xi_j)+\eta_j\|\}=O(\rho^j)$, thus $\|\xi_j\|=O(\rho^j)$, and therefore $\|\eta_j\|=\|x_j-\xi_j-\theta(\xi_j)\|\leqslant \|x_j\|+2\|\xi_j\|=O(\rho^j)$ as $j\to -\infty$. But $\|\eta_j\|\geqslant (b+2\gamma)^j\|\eta_0\|$, for all $j\leqslant 0$, so there exist $c>0$ s.t. $\|\eta_0\|\leqslant \frac{\|\eta_j\|}{(b+2\gamma)^j}\leqslant c(\frac{\rho}{b+2\gamma})^j\rightarrow 0$, as $j\to -\infty$, which shows that $\eta_0=0$ and $x=x_0=\xi_0+\theta(\xi_0)\in W^{\rm u}_{\rho}$, which concludes the proof of (v) for $W^{\rm u}_\rho$.

For the case $W^{\rm s}_{\rho}=\{\sigma(\eta)+\eta\colon \eta\in X_{s}\}$, $\sigma\colon X_{s}\rightarrow X_u$, the condition $SW^{\rm s}_{\rho}\subset W^{\rm s}_{\rho}$ implies that for all $\eta \in X_{s}$ there exists $\hat{\eta}\in X_{s}$ with
\begin{equation}\label{ptofixoS}
\left\{
\begin{split}
&\sigma(\hat{\eta})=L_u\sigma_{\ast}(\eta)+N_u(\sigma(\eta)+\eta)\\
&\hat{\eta}=L_s\eta+N_s(\sigma(\eta)+\eta)
\end{split}\right.
\end{equation}
with $\sigma_{\ast}=\sigma$ and the remainder of the proof is completely analogous.

Finally, we will show that $W^{\rm s}_{\rho}\cap W^{\rm u}_{\rho}=\{0\}$. Let $x=\xi+\eta\in W^{\rm s}_{\rho}\cap W^{\rm u}_{\rho}$, then $\xi=\sigma(\eta)$ and $\eta=\theta(\xi)$. Thus $\xi=\sigma(\theta(\xi))$ and $\|\xi\|\leqslant {\rm Lip} \ \sigma\cdot {\rm Lip} \ \theta\|\xi\|$, therefore $\xi=0$ and implies that $\eta=0$, consequently $x=0$.
\end{proof}

\begin{remark} It is clear from equation \eqref{eq:LipTheta2} that ${\rm Lip}(\theta)\to 0$ as $\gamma\to 0$.
\end{remark}

We see that this result is stated with the hypothesis that $N$ is a Lipschitz map with a small Lipschitz constant $\gamma$ in the {\it whole} space $X$, but it is important to consider the case when $N$ is only defined in a small neighborhood of $0$, and to this end, note that if $N\colon \overline{B_r^X(0)}\to X$ is a Lipschitz map with $N(0)=0$ and ${\rm Lip}(N)\leqslant \gamma$, then the function $\tilde{N}\colon X\to X$ defined by
\begin{equation}\label{eq:ExtensionN}
\tilde{N}(x)=\left\{
\begin{split}
&N(x),\ \hbox{ if }\|x\|\leqslant r\\
&N\left(\frac{r}{\|x\|}x\right), \ \hbox{ if } \|x\|>r
\end{split}
\right.
\end{equation}
is an extension of $N$ to $X$, Lipschitz in the whole space $X$, with
$$
{\rm Lip}(\tilde{N})\leqslant 2{\rm Lip}(N) \hbox{ and } \sup_{x\in X}\|\tilde{N}(x)\|\leqslant \gamma r.
$$

Hence, to obtain invariant manifolds in a neighborhood of $0$ for $N$, we apply Theorem \ref{teoUS} for $\tilde{N}$ and obtain the following result.

\begin{theorem}[Local invariant manifolds] \label{teo:LIM}
Let $X$ be a Banach space, $L:X\rightarrow X$ a bounded linear operator, $\rho>0$ such that $\sigma(L) \cap \{\lambda \in \mathbb{C}\colon |\lambda|=\rho\}=\varnothing$. Then, there exist $\gamma >0$, a neighborhood $U$ of $0$ in $X$, a decomposition $X=X_u\oplus X_{s}$ and an equivalent norm in $X$ such that if $N\colon U\rightarrow X$ is Lipschitz continuous and satisfies $N(0)=0$ and ${\rm Lip}(N)\leqslant \gamma$, there exist sets $W^{\rm s}_{loc,\rho}$, $W^{\rm u}_{loc,\rho}$ in $X$ and neighborhoods $V_u,V_s$ of $0$ in $X_u,X_s$, respectively, such that for $S=L+N$, we have the following:
\begin{enumerate}
\item[{\bf (a)}] $W^{\rm u}_{loc,\rho}$ is a graph of a Lipschitz map over $V_u$, $W^{\rm s}_{loc,\rho}$ is a graph of a Lipschitz map over $V_s$.
\item[{\bf (b)}] $S(W^{\rm u}_{loc,\rho})\supset W^{\rm u}_{loc,\rho}$, $S(W^{\rm s}_{loc,\rho})\subset W^{\rm s}_{loc,\rho}\cap S(X)$ and the restriction $S|_{W^{\rm u}_{loc,\rho}}\colon W^{\rm u}_{loc,\rho} \to S(W^{\rm u}_{loc,\rho})$ is a homeomorphism.
\item[{\bf (c)}] $W^{\rm s}_{loc,\rho}\cap W^{\rm u}_{loc,\rho}=\{0\}$.
\item[{\bf (d)}] ${\rm Lip}\left(S|_{W^{\rm s}_{loc,\rho}}\right)<\rho$ and ${\rm Lip}\left(S|_{W^{\rm u}_{loc,\rho}}^{-1}\right)<\rho^{-1}$.
\item[{\bf (e)}] We have
\begin{equation*}
\begin{split}
&\quad {\bf 1.} \ W^{\rm s}_{loc,\rho}=\{x\in U\colon S^n(x)\in U \hbox{ for all }n\geqslant 0 \hbox{ and } \|S^nx\|=o(\rho^n) \hbox{ as }n\to \infty\},\\
&\quad {\bf 2.} \ W^{\rm u}_{loc,\rho}=\{x\in U\colon \hbox{ there exists } \{x_j\}_{j\leqslant 0}\subset U \hbox{ with }x_0=x, x_{j+1}=S(x_j) \hbox{ and } \sup_{j\leqslant 0}\rho^{-j}\|x_j\|<\infty\},
\end{split}
\end{equation*}
and for $\rho^{\ast}$ sufficiently close to $\rho$, $W^{\rm s}_{loc,\rho^{\ast}}=W^{\rm s}_{\rho}$ and $W^{\rm u}_{loc,\rho^{\ast}}=W^{\rm u}_{\rho}$;
%\item[{\bf (f)}] There exists a continuous map $P\colon U\rightarrow W^{\rm u}_{\rho}$ with
%$$
%\|Px-x\|\leqslant Cd(x,W^{\rm u}_{\rho}),
%$$
%for some constant $C=C(\gamma,a,b)>0$ and $S\circ P= P\circ S$. If $\xi\in W^{\rm u}_{loc,\rho}$ then $Px=\xi$ if and only if $\|S^nx-S^n\xi\|=o(\rho^n)$ as $n\to \infty$. Also $d(S^nx,W^{\rm u}_{\rho})\leqslant d\beta^n d(x,W^{\rm u}_{\rho})$ for all $n\geqslant 0$, $x\in U$ and constants $\beta=\beta(\gamma,a,b)<\rho$ and $d=d(\gamma,a,b)>0$.
\end{enumerate}

%Moreover, if $L$ is an isomorphism, then for $\gamma$ sufficiently small, $S$ is a homeomorphism and $SW^{\rm s}_{\rho}=W^{\rm s}_{\rho}$.
\end{theorem}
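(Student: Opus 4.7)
The plan is to deduce Theorem \ref{teo:LIM} from Theorem \ref{teoUS} by means of the extension $\tilde{N}$ defined in \eqref{eq:ExtensionN}. After possibly shrinking $U$, assume $\overline{B_r^X(0)}\subset U$ for some $r>0$, and take $\tilde{N}\colon X\to X$ as in \eqref{eq:ExtensionN}, so that $\tilde{N}(0)=0$, ${\rm Lip}(\tilde{N})\leqslant 2\gamma$, and $\tilde{N}=N$ on $\overline{B_r^X(0)}$. I would require $\gamma$ small enough that $2\gamma$ qualifies as a small Lipschitz constant for $L$ in the sense of Definition \ref{def:SmallLConstant} (that is, $b+4\gamma<\rho<a-4\gamma$), and then apply Theorem \ref{teoUS} to $\tilde{S}:=L+\tilde{N}$ to obtain global invariant manifolds $W^{\rm u}_{\rho}=\{\xi+\theta(\xi):\xi\in X_u\}$ and $W^{\rm s}_{\rho}=\{\sigma(\eta)+\eta:\eta\in X_s\}$ together with the decomposition $X=X_u\oplus X_s$ and equivalent norm.

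Next, since $\theta(0)=\sigma(0)=0$ and ${\rm Lip}(\theta),{\rm Lip}(\sigma)<1$ by \eqref{eq:LipTheta2}, I would pick small open balls $V_u\subset X_u$ and $V_s\subset X_s$ around $0$ so that their graphs under $\theta$ and $\sigma$ are contained in $\overline{B_r^X(0)}$, and define
\[
W^{\rm u}_{loc,\rho}:=\{\xi+\theta(\xi):\xi\in V_u\},\qquad W^{\rm s}_{loc,\rho}:=\{\sigma(\eta)+\eta:\eta\in V_s\}.
\]
Property (a) is then immediate, (c) follows from the global intersection $W^{\rm s}_{\rho}\cap W^{\rm u}_{\rho}=\{0\}$, and (d) is inherited because $S\equiv\tilde{S}$ on $U$ and both local manifolds lie in $U$, so the restrictions of $S$ to the local manifolds coincide with those of $\tilde{S}$, whose Lipschitz bounds come from Theorem \ref{teoUS}(d).

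For (b), let $x=\xi+\theta(\xi)\in W^{\rm u}_{loc,\rho}$; by global invariance there is a unique $y=\zeta+\theta(\zeta)\in W^{\rm u}_\rho$ with $\tilde{S}(y)=x$, and the bound ${\rm Lip}(\tilde{S}|_{W^{\rm u}_\rho}^{-1})\leqslant(a-4\gamma)^{-1}$ controls $\|\zeta\|$ in terms of $\|\xi\|$, so for $V_u$ chosen sufficiently small, $\zeta\in V_u$ and $y\in W^{\rm u}_{loc,\rho}$, giving $S(W^{\rm u}_{loc,\rho})\supset W^{\rm u}_{loc,\rho}$; the homeomorphism statement follows because $\tilde{S}|_{W^{\rm u}_\rho}$ is a homeomorphism and $S\equiv\tilde{S}$ on $U$. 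The analogous forward argument for $W^{\rm s}_{loc,\rho}$ uses ${\rm Lip}(\tilde{S}|_{W^{\rm s}_\rho})<\rho$ and the choice of $V_s$, yielding $S(W^{\rm s}_{loc,\rho})\subset W^{\rm s}_{loc,\rho}\cap S(X)$. For (e), if $x\in W^{\rm s}_{loc,\rho}$ then the forward orbit stays in the local manifold (by the forward invariance just shown) and hence in $U$, and the decay $\|S^nx\|=o(\rho^n)$ comes from Theorem \ref{teoUS}(e) applied to $\tilde{S}$; conversely, if $x\in U$ has $S^n(x)\in U$ for all $n\geqslant 0$ and $\|S^nx\|=o(\rho^n)$, then $\tilde{S}^n(x)=S^n(x)$ for all $n\geqslant 0$, so $x\in W^{\rm s}_\rho$ by the global characterization, and smallness places it in $W^{\rm s}_{loc,\rho}$. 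The $W^{\rm u}$ case is handled symmetrically via backward orbits $\{x_j\}\subset U$.

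The main technical obstacle I anticipate is calibrating the radii $r$, $V_u$, and $V_s$ simultaneously so that (i) the graph portions lie in $\overline{B_r^X(0)}$, where $\tilde{N}=N$, and (ii) the preimages in $W^{\rm u}_\rho$ for the $\supset$-invariance and the forward images in $W^{\rm s}_\rho$ for the $\subset$-invariance of points in the local manifolds remain inside $V_u$ and $V_s$ respectively. Because $\tilde{N}$ differs from $N$ outside $\overline{B_r^X(0)}$, the characterization in (e) must be phrased, as in the statement, entirely in terms of orbits that stay in $U$, so that the extension is invisible to the dynamics and the global version of (e) transfers verbatim.
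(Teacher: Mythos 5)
Your proposal is correct and follows exactly the route the paper takes: the paper's entire ``proof'' of Theorem \ref{teo:LIM} is the remark preceding it, which introduces the extension $\tilde{N}$ of \eqref{eq:ExtensionN} (with ${\rm Lip}(\tilde N)\leqslant 2\,{\rm Lip}(N)$) and then applies Theorem \ref{teoUS} to $L+\tilde N$, restricting the resulting global graphs to small neighborhoods. Your write-up merely supplies the radius-calibration details that the paper leaves implicit.
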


\section{$\mathfrak{L}$-hyperbolicity and isolated global solutions} \label{sec:LHP}

In this section we deal with {\it invariant manifolds} near some special fixed points, that we call {\it $\mathfrak{L}$-hyperbolic points} and also with the {\it isolation} of such points, in the sense of global solutions, that we will specify in details below.

\subsection{Weakly hyperbolic points and invariant manifolds}

\begin{definition}\label{w_hyperbolic_fixed_point}
Let $x^\ast$ be an equilibrium point for a map $T\in C(X)$. We say that $x^*$ is a \df{weakly hyperbolic point} if the map $S\colon X\to X$ defined by
\begin{equation}
\label{eq:S}
S(x)=T(x+x^\ast)-T(x^\ast)=T(x+x^\ast)-x^\ast, \hbox{ for }x\in X,
\end{equation}
has a decomposition $S=L+N$ satisfying the following conditions:
\begin{itemize}
\item[\bf (i)] $L\in \mathcal{L}(X)$ is such that $\sigma(L)\cap S^1=\varnothing$ and $0<b<1<a$ are as is \eqref{eq:AB} and
\item[\bf (ii)] there exists a neighborhood $U$ of $0$ in $X$ such that $N\colon U\to X$ has small Lipschitz constant with respect to $L$,
\item[\bf (iii)] the constant $\gamma={\rm Lip}(N)$ given by $(ii)$ is such that $0<\gamma \cdot \|(I-L)^{-1}\|\leqslant 1$.
\end{itemize}

Choosing $\delta>0$ such that $B_\delta^{X}(0)\subset U$, we say also that $x^*$ is a \df{weakly hyperbolic point with parameters} $\gamma,a,b,\delta$.
\end{definition}

We clearly have $N(0)=S(0)-L0 = T(x^\ast)-x^\ast=0$. Also, note that $(I-L)^{-1}x=(I-L_u)^{-1}x_u+(I-L_s)^{-1}x_{s}$ and then $\|(I-L)^{-1}\|\leqslant \frac{a}{a-1}+\frac{1}{1-b}$. Thus property (iii) holds true if $\gamma$ can be chosen such that $0<\gamma \leqslant \frac{(a-1)(1-b)}{a(2-b)-1}$.

Using Theorem \ref{teo:LIM} we obtain as an straightforward application the main result concerning Lipschitz invariant manifolds near a weakly hyperbolic point.

\begin{definition} 
Let $x^\ast\in X$ a hyperbolic fixed point for a map $T\in C(X)$. We define the \df{ local unstable manifold} of $x^\ast$ as  
$$W^{u}_{\rm loc}(x^\ast)=\{x^\ast+\xi+\theta_u(\xi)\colon \|\xi\|<r, \ \xi \in X_u\},$$ 
were $\theta_u$ and $X_u$ as given by Theorem \ref{teoUS}. The same way we define the local stable manifold.
\end{definition}

\begin{theorem} \label{cor:US}
Let $X$ be a Banach space, $T\in C(X)$ with a weakly hyperbolic point $x^\ast$.  Then there exist a decomposition $X=X_u\oplus X_s$, an $\epsilon_0>0$, neighborhoods $U$ of $x^\ast$ in $X$ and $V_u,V_s$ of $0$ in $X_u,X_s$, respectively, and sets $W^{\rm u}_{loc}(x^\ast)$, $W^{\rm s}_{loc}(x^\ast)$ such that
\begin{enumerate}
\item[{\bf (a)}] $W^{\rm u}_{loc}(x^\ast)$ is a graph of a Lipschitz map over $x^\ast+V_u$, $W^{\rm s}_{loc}(x^\ast)$ is a graph of a Lipschitz map over $x^\ast+V_s$.
\item[{\bf (b)}] $T(W^{\rm u}_{loc}(x^\ast))\supset W^{\rm u}_{loc,\rho}(x^\ast)$, $T(W^{\rm s}_{loc}(x^\ast)\subset W^{\rm s}_{loc,\rho}(x^\ast)\cap T(X)$ and the following restriction $T|_{W^{\rm u}_{loc,\rho}(x^\ast)}\colon W^{\rm u}_{loc,\rho}(x^\ast) \to T(W^{\rm u}_{loc,\rho}(x^\ast))$ is a homeomorphism.
\item[{\bf (c)}] $W^{\rm s}_{loc}(x^\ast)\cap W^{\rm u}_{loc}(x^\ast)=\{x^\ast\}$.
\item[{\bf (d)}] ${\rm Lip}\left(T|_{W^{\rm s}_{loc}(x^\ast)}\right)<1$ and ${\rm Lip}\left(T|_{W^{\rm u}_{loc}(x^\ast)}^{-1}\right)<1$.
\item[{\bf (e)}] We have
\begin{equation*}
\begin{split}
&\quad {\bf 1.} \ W^{\rm s}_{loc}(x^\ast)=\{x\in U\colon T^n(x)\in U \hbox{ for all }n\geqslant 0 \hbox{ and } \|T^nx-x^\ast\|=o((1-\epsilon_0)^n) \hbox{ as }n\to \infty\},\\
&\quad {\bf 2.} \ W^{\rm u}_{loc}(x^\ast)=\{x\in U\colon \hbox{ there exists } \{x_j\}_{j\leqslant 0}\subset U \hbox{ with }x_0=x, x_{j+1}=T(x_j) \hbox{ and }\\
& \hspace{10cm} \sup_{j\leqslant 0}(1+\epsilon_0)^{-j}\|x_j-x^\ast\|<\infty\}.
\end{split}
\end{equation*}
\end{enumerate}
\end{theorem}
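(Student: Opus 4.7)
The plan is to reduce everything to Theorem \ref{teo:LIM} applied to a translated map, with $\rho=1$ playing the distinguished role.

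First I would perform the standard translation. Set $S(x)=T(x+x^\ast)-x^\ast$ as in \eqref{eq:S}. By the definition of weakly hyperbolic point, $S$ admits a decomposition $S=L+N$ where $L\in\mathcal{L}(X)$ satisfies $\sigma(L)\cap\{\lambda\in\mathbb{C}\colon|\lambda|=1\}=\varnothing$ and $N\colon U_0\to X$ (for some neighborhood $U_0$ of $0$) is Lipschitz with small Lipschitz constant $\gamma$ with respect to $L$, and $N(0)=S(0)-L\cdot0=T(x^\ast)-x^\ast=0$. This is exactly the hypothesis of Theorem \ref{teo:LIM} with $\rho=1$.

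Next I would apply Theorem \ref{teo:LIM} with $\rho=1$ to obtain a decomposition $X=X_u\oplus X_s$ (with the equivalent norm from \eqref{eq:AB}), a neighborhood $U_0$ of $0$, neighborhoods $V_u\subset X_u$, $V_s\subset X_s$ of $0$, and local invariant manifolds $W^{\rm u}_{loc,1}$, $W^{\rm s}_{loc,1}$ of $S$ at $0$ satisfying properties (a)--(e) of that theorem. Then define
\[
W^{\rm u}_{loc}(x^\ast)\doteq x^\ast+W^{\rm u}_{loc,1}, \qquad W^{\rm s}_{loc}(x^\ast)\doteq x^\ast+W^{\rm s}_{loc,1},
\]
and set $U\doteq x^\ast+U_0$. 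The verification of (a)--(d) is then mechanical: translation preserves the graph structure over $x^\ast+V_u$ and $x^\ast+V_s$; the identity $T(y)=x^\ast+S(y-x^\ast)$ transfers the invariance statements $S(W^{\rm u}_{loc,1})\supset W^{\rm u}_{loc,1}$ and $S(W^{\rm s}_{loc,1})\subset W^{\rm s}_{loc,1}\cap S(X)$ directly to the corresponding statements for $T$; the homeomorphism property transfers in the same way; translation preserves Lipschitz constants, so (d) is immediate; and $W^{\rm s}_{loc,1}\cap W^{\rm u}_{loc,1}=\{0\}$ becomes $W^{\rm s}_{loc}(x^\ast)\cap W^{\rm u}_{loc}(x^\ast)=\{x^\ast\}$.

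The only point deserving care is (e), and this is the step where I expect a small technical twist. The characterizations in Theorem \ref{teo:LIM}(e) are stated at the critical radius $\rho=1$, which would only give rate $o(1)$ and $\sup|x_j|<\infty$; what we want is the strict exponential rates $o((1-\epsilon_0)^n)$ and $\sup(1+\epsilon_0)^{-j}\|x_j-x^\ast\|<\infty$. Here I would invoke the final clause of Theorem \ref{teo:LIM}(e): there exists $\epsilon_0>0$ small enough so that $\rho^\ast=1-\epsilon_0$ and $\rho^\ast=1+\epsilon_0$ both lie in the range of validity given by the hypothesis $\sigma(L)\cap\{|\lambda|=\rho^\ast\}=\varnothing$ (possible since $\sigma(L)$ is closed and disjoint from the unit circle), and such that $W^{\rm s}_{loc,1-\epsilon_0}=W^{\rm s}_{loc,1}$ and $W^{\rm u}_{loc,1+\epsilon_0}=W^{\rm u}_{loc,1}$. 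Applying the characterization in Theorem \ref{teo:LIM}(e) at these shifted radii, and translating by $x^\ast$, yields precisely the two descriptions required in (e). The main (mild) obstacle is therefore just to pin down this $\epsilon_0$ from the spectral gap and check that the constants $\gamma,a,b$ furnished by weak hyperbolicity are compatible with the hypotheses of Theorem \ref{teo:LIM} at both radii $1\pm\epsilon_0$; condition (iii) of Definition \ref{w_hyperbolic_fixed_point} is designed precisely to ensure this, so no new smallness restriction needs to be imposed.
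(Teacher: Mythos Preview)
Your proposal is correct and follows exactly the route the paper intends: the theorem is stated as a direct corollary of Theorem~\ref{teo:LIM} via the translation $S(x)=T(x+x^\ast)-x^\ast$ with $\rho=1$, and the paper gives no further argument. One small correction: the compatibility of $\gamma$ with the shifted radii $1\pm\epsilon_0$ comes from condition~(ii) of Definition~\ref{w_hyperbolic_fixed_point} (i.e., $b+2\gamma<1<a-2\gamma$, which persists for nearby $\rho^\ast$), not from condition~(iii), which is used elsewhere for the contraction in Proposition~\ref{teoconv}.
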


We say that $W^{\rm u}_{loc}(x^{\ast})$ is the \df{local unstable manifold} of $x^{\ast}$ and $W^{\rm s}_{loc}(x^{\ast})$ is the \df{local stable manifold} of $x^\ast$. Also, the \df{dimension} ${\rm dim} \ W^{\rm j}_{loc}(x^{\ast})$ is defined as the dimension of $X_j$, for $j=u,s$.

\subsection{$\mathfrak{L}$-hyperbolic points} We define in this subsection the concept of {\it $\mathfrak{L}$-hyperbolic points}, which is crucial for our study. 

\begin{definition}
Let $X,Y$ be Banach spaces, $U\subset X$ and $V\subset Y$. We say that a map $g\colon U\to V$ is \df{bi-Lipschitz} if $g\colon U\to V$ is invertible with inverse $g^{-1}\colon V\to U$ and both $g$ and $g^{-1}$ are Lipschitz continuous maps.
\end{definition}

\begin{proposition}\label{hu3}
Let $S\in C(X)$ with $0$ as a weakly hyperbolic point and decomposition $S=L+N$. Let $X=X_u\oplus X_s$ be the decomposition given in Definition \ref{w_hyperbolic_fixed_point}. Then there exist $\gamma_1=\gamma_1(L)>0$ such that if $\tn{Lip}(N)<\gamma_1$, then there exists neighborhoods $U,V_u,V_s$ of $0$ in $X,X_u,X_s$, respectively, and a bi-Lipschitz map $g\colon X\to X$ which satisfies
\begin{itemize}
\item[\bf (a)] $g(0)=0$;

\item[\bf (b)] if $S_1=g^{-1}\circ T \circ g$, then $0$ is a weakly hyperbolic point of $S_1$ with decomposition $S=L+N_1$;

\item[\bf (c)] if $W^{\rm u,1}_{loc}(0)$ and $W^{\rm s,1}_{loc}(0)$ are the invariant manifolds given in Corollary \ref{cor:US} for $S_1$, then
$$
W^{\rm u,1}_{loc}(0)\cap U = V_u \hbox{ and }W^{\rm s,1}_{loc}(0) \cap U = V_s;
$$
in particular, $N_{1,u}(x_s)=N_{1,s}(x_u)=0$ for all $x_u\in V_u$ and $x_s\in V_s$.
\end{itemize}
%$\gamma_1=\gamma_1(a,b),\delta_1=\delta_1(a,b,\delta)>0$ and there exist a change of variables $g:X\to X$ bi-lipschitz such that $S_1:=g^{-1}\circ T_1\circ g$ satisfies: $W^{u,\delta_1}(S_1,0)=B_{\delta_1}^{X_u}(0)$ and $W^{s,\delta_1}(S_1,0)=B_{\delta_1}^{X_{s}}(S_1,0)$. Moreover, we can choose $\delta_1$ such that $\delta_1\to \delta$ as $\gamma\to 0$. If $\delta =\infty$, então $\delta_1=\infty$.
\end{proposition}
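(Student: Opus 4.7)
The plan is to construct a bi-Lipschitz change of coordinates $g$ that ``straightens'' the two Lipschitz invariant manifolds of $S$ at $0$ so that, after conjugation, they coincide locally with the spectral subspaces $X_u$ and $X_s$. After first extending $N$ to a map on all of $X$ via \eqref{eq:ExtensionN} (at most doubling the Lipschitz constant), I apply Theorem \ref{teoUS} with $\rho = 1$ to obtain Lipschitz maps $\theta \colon X_u \to X_s$ and $\sigma \colon X_s \to X_u$, with $\theta(0) = \sigma(0) = 0$, whose graphs are the global invariant manifolds $W^{\rm u}$ and $W^{\rm s}$ of $S$. By \eqref{eq:LipTheta2} (and its analogue for $\sigma$), both $\tn{Lip}(\theta)$ and $\tn{Lip}(\sigma)$ tend to $0$ as $\tn{Lip}(N) \to 0$.

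\emph{Definition of $g$.} I define
\[
g(x_u + x_s) := \bigl(x_u + \sigma(x_s)\bigr) + \bigl(x_s + \theta(x_u)\bigr),
\]
so that $g = I + h$ with $h(x) = \theta(\pi_u x) + \sigma(\pi_s x)$; with the max norm on $X = X_u \oplus X_s$, one checks $\alpha := \tn{Lip}(h) \leqslant \max\{\tn{Lip}(\theta), \tn{Lip}(\sigma)\}$. For $\gamma_1$ small enough that $\alpha < 1$, the equation $g(x) = y$ rewrites as the contraction $x = y - h(x)$; hence $g$ is a Lipschitz bijection with $\tn{Lip}(g^{-1}) \leqslant (1 - \alpha)^{-1}$, and $g(0) = 0$ (giving (a)). By construction $g(X_u) = W^{\rm u}$ and $g(X_s) = W^{\rm s}$.

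\emph{Verification of (b).} Set $S_1 := g^{-1} \circ S \circ g$ and $N_1 := S_1 - L$. Using the identity $g^{-1}(y) = y - h(g^{-1}(y))$, a direct calculation yields
\[
N_1(x) - N_1(y) = L\bigl(h(x) - h(y)\bigr) + \bigl(N(g(x)) - N(g(y))\bigr) - \bigl(h(S_1(x)) - h(S_1(y))\bigr).
\]
Combined with the trivial bound $\|S_1(x) - S_1(y)\| \leqslant \|L\|\|x-y\| + \|N_1(x) - N_1(y)\|$, solving for the Lipschitz seminorm gives
\[
\tn{Lip}(N_1) \leqslant \frac{2\|L\|\alpha + \tn{Lip}(N)(1 + \alpha)}{1 - \alpha},
\]
which vanishes as $\gamma_1 \to 0$. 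I then shrink $\gamma_1$ further so that this bound satisfies both the spectral gap condition of Definition \ref{def:SmallLConstant} and the bound $\tn{Lip}(N_1) \|(I - L)^{-1}\| \leqslant 1$ of Definition \ref{w_hyperbolic_fixed_point}, yielding (b).

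\emph{Verification of (c) and main obstacle.} For $\xi \in X_u$, $g(\xi) = \xi + \theta(\xi) \in W^{\rm u}$, and invariance $S(W^{\rm u}) = W^{\rm u}$ produces $\hat{\xi} \in X_u$ with $S(g(\xi)) = g(\hat{\xi})$, whence $S_1(\xi) = \hat{\xi} \in X_u$; thus $S_1(X_u) \subset X_u$, i.e.\ $X_u$ is an invariant Lipschitz graph over $X_u$ for $S_1$, namely the graph of the zero map. Applying the uniqueness of such invariant graphs established in the proof of Theorem \ref{teoUS} to $S_1 = L + N_1$, we conclude $W^{{\rm u},1} = X_u$ globally, and passing to neighborhoods $W^{{\rm u},1}_{loc}(0) \cap U = V_u$ for suitable $U$ and $V_u$. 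The symmetric argument using $\sigma$ and $W^{\rm s}$ gives $W^{{\rm s},1}_{loc}(0) \cap U = V_s$, and the asserted vanishings $N_{1,s}|_{V_u} = 0$ and $N_{1,u}|_{V_s} = 0$ follow immediately from $S_1$ preserving $X_u$ and $X_s$. The main obstacle is the implicit estimate for $\tn{Lip}(N_1)$ above: $\gamma_1 = \gamma_1(L)$ must be chosen so that the smallness thresholds coming from Theorem \ref{teoUS} (controlling $\theta$ and $\sigma$) and from Definition \ref{w_hyperbolic_fixed_point} (controlling $N_1$) are met simultaneously.
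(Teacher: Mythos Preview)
Your proof is correct, but it takes a genuinely different route from the paper's argument in Section~\ref{app:ProofHu3}.

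The paper proceeds in \emph{two sequential conjugations}. First it defines $h(\xi,\eta)=\xi+\sigma(\eta)+\eta$ using only the stable graph $\sigma$, so that $\tilde T=h^{-1}\circ S\circ h$ has $\tilde N_u(\eta)=0$ on $X_s$ (the stable manifold is straightened). It then computes the unstable graph $\tilde\theta$ of the intermediate map $\tilde T$, defines $k(\xi+\eta)=(\xi,\tilde\theta(\xi)+\eta)$, and sets $g=h\circ k$. The nonlinear remainders $\tilde N$ and $\hat N$ are written out explicitly at each stage and their Lipschitz constants are bounded by hand, which is where most of the technical work lies.

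Your approach is a \emph{single} conjugation $g(x_u+x_s)=(x_u+\sigma(x_s))+(x_s+\theta(x_u))$, using the graphs $\theta,\sigma$ of the original map $S$ simultaneously. The key observation you exploit is that $g|_{X_u}$ parametrises $W^{\rm u}$ and $g|_{X_s}$ parametrises $W^{\rm s}$, so invariance of $W^{\rm u},W^{\rm s}$ under $S$ translates directly into $S_1(X_u)\subset X_u$ and $S_1(X_s)\subset X_s$; the identification $W^{{\rm u},1}=X_u$, $W^{{\rm s},1}=X_s$ then follows from the uniqueness of the fixed point in the contraction argument of Theorem~\ref{teoUS}. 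Your Lipschitz bound for $N_1$ is obtained abstractly from $N_1(x)=Lh(x)+N(g(x))-h(S_1(x))$ rather than by expanding components.

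What each buys: your route is shorter and conceptually cleaner, and avoids computing the intermediate graph $\tilde\theta$. The paper's two-step route, while longer, yields explicit componentwise formulas for $\tilde N_u,\tilde N_s,\hat N_u,\hat N_s$ and explicit constants such as $f(\gamma)=\tfrac{2a\gamma}{a-b-3\gamma}$; these explicit expressions are reused later in the paper (e.g.\ in the $\delta_1$ formula at the end of Case~2 and implicitly in later perturbation estimates), so the extra bookkeeping is not wasted.
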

\begin{proof} This proof is fairly technical, and so in order to give a clear outline of the theory, it is present in Section \ref{app:ProofHu3} of Appendix \ref{app:A}.
\end{proof}

\begin{definition}\label{hyperbolic_fixed_point}
Let $x^\ast$ be a weakly hyperbolic point for a map $T\in C(X)$ with decomposition $S=L+N$ as in Definition \ref{w_hyperbolic_fixed_point}. We say that $x^*$ is a \df{$\mathfrak{L}$-hyperbolic point} if $\tn{Lip}(N)<\gamma_1$ with $\gamma_1$ given by Proposition \ref{hu3}.
\end{definition}

\subsection{Isolation of $\mathfrak{L}$-hyperbolic points}  We begin this subsection with the definition of {\it isolated solution}.

\begin{definition}
Let $\xi$ be a global solution of a map $T\in C(X)$. We say that $\xi$ is {\bf isolated} if there exists a neighborhood $U$ of $\xi(\mathbb{Z})$ in $X$ such that $\xi$ is the only global solution of $T$ with $\xi(\mathbb{Z})\subset U$.
\end{definition}

\begin{proposition}\label{hu2}
Let $S\in C(X)$ with $0$ as a $\mathfrak{L}$-hyperbolic point. Then there exists $\delta>0$ such that $0$ is the unique global solution of $S$ in $B_\delta^X(0)$; that is, $0$ is isolated.

% and the decomposition $S=L+N$ with parameters $\gamma,a,b,\delta$.
%Let $X=X_u\oplus X_{s}$ as the introduction. Suppose that $W^{u,\delta}(0)=B_\delta^{X_u}(0)$ and $W^{s,\delta}(0)=B_\delta^{X_{s}}(0)$.
%So the only global solution contained in $B_{\delta}^X(0)$ is the constant solution $\xi(m)=0$ para todo $m\in \Z$.
\end{proposition}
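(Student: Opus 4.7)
The plan is to use a variation-of-constants representation of bounded global orbits together with the hyperbolic dichotomy of $L$ to force a contractive estimate. Write $S=L+N$ as in Definition \ref{w_hyperbolic_fixed_point}, with $X=X_u\oplus X_s$, $\|L_s\|\leqslant b<1$ and $\|L_u^{-1}\|\leqslant a^{-1}<1$, and ${\rm Lip}(N)\leqslant\gamma$ on a neighborhood $U$ of $0$, where the small-Lipschitz-constant condition imposes $b+2\gamma<1<a-2\gamma$. Choose $\delta>0$ with $B_\delta^X(0)\subset U$ and let $\xi\colon\mathbb{Z}\to X$ be any global solution of $S$ with $\xi(\mathbb{Z})\subset B_\delta^X(0)$; the goal is to show $\xi\equiv 0$.

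Decompose $\xi(j)=u_j+s_j$ with $u_j\in X_u$, $s_j\in X_s$, and project the recursion $\xi(j+1)=L\xi(j)+N(\xi(j))$ onto the two subspaces to obtain
\[
u_{j+1}=L_u u_j+N_u(\xi(j)),\qquad s_{j+1}=L_s s_j+N_s(\xi(j)),
\]
where $N_\sigma=\pi_\sigma\circ N$ is $\gamma$-Lipschitz with $N_\sigma(0)=0$. Iterating the stable equation backward and using $\|L_s^n\|\leqslant b^n\to 0$ together with the uniform bound $\|s_{j-n}\|\leqslant\delta$, the boundary term vanishes and one obtains
\[
s_j=\sum_{k=0}^{\infty}L_s^k\,N_s(\xi(j-1-k)).
\]
Iterating the unstable equation forward, using $L_u$ invertible with $\|L_u^{-n}\|\leqslant a^{-n}\to 0$, yields
\[
u_j=-\sum_{k=0}^{\infty}L_u^{-(k+1)}\,N_u(\xi(j+k)).
\]
Both series converge absolutely because $\|N_\sigma(\xi(k))\|\leqslant\gamma\delta$ and the geometric tails are summable.

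Now apply the bounds $\|L_s^k\|\leqslant b^k$, $\|L_u^{-(k+1)}\|\leqslant a^{-(k+1)}$ and ${\rm Lip}(N_\sigma)\leqslant\gamma$ to get
\[
\|s_j\|\leqslant \frac{\gamma}{1-b}\|\xi\|_\infty,\qquad \|u_j\|\leqslant \frac{\gamma}{a-1}\|\xi\|_\infty.
\]
In the equivalent max-norm $\|\xi(j)\|=\max\{\|u_j\|,\|s_j\|\}$ this yields
\[
\|\xi\|_\infty \leqslant \gamma\max\!\Big\{\tfrac{1}{1-b},\tfrac{1}{a-1}\Big\}\|\xi\|_\infty.
\]
The condition $b+2\gamma<1<a-2\gamma$ forces $\gamma/(1-b)<1/2$ and $\gamma/(a-1)<1/2$, so the prefactor is strictly less than $1$. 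Hence $\|\xi\|_\infty=0$ and $\xi\equiv 0$, giving the isolation with any $\delta>0$ for which $B_\delta^X(0)\subset U$.

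The main obstacle is the rigorous justification of the passage to the limit $n\to\infty$ in the two one-sided iterations, i.e., the vanishing of the boundary terms $L_s^n s_{j-n}$ and $L_u^{-n}u_{j+n}$; this is precisely where the uniform $\delta$-boundedness of $\xi$ combines with the spectral gap to close the argument. The rest is an elementary Neumann-type estimate, and in fact it only invokes the weakly hyperbolic structure encoded in $\mathfrak{L}$-hyperbolicity; the full strength of Proposition \ref{hu3} is not needed for the isolation itself.
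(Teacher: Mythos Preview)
Your argument is correct and in fact establishes isolation under the weaker hypothesis of weak hyperbolicity alone: the only ingredients you use are the dichotomy $\|L_s\|\leqslant b$, $\|L_u^{-1}\|\leqslant a^{-1}$ and the small-Lipschitz bound $b+2\gamma<1<a-2\gamma$, and your two one-sided Green's-function series together with the contraction estimate are fully rigorous as written.

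The paper proceeds quite differently. It invokes Proposition~\ref{hu3} to conjugate $S$ by a bi-Lipschitz map $g$ to a new map $S_1=L+N_1$ whose local invariant manifolds are flattened onto $X_u$ and $X_s$; this yields the pointwise property $N_{1,u}(x_s)=N_{1,s}(x_u)=0$, from which one gets directly $\|\pi_uS_1(x)\|\geqslant(a-\gamma)\|x_u\|$ and $\|\pi_sS_1(x)\|\leqslant(b+\gamma)\|x_s\|$. Isolation for $S_1$ then follows by a forward/backward blow-up/decay argument, and is transferred to $S$ via $g$. Your approach bypasses the (rather technical) coordinate change entirely and is more elementary; the paper's route, on the other hand, exploits the same conjugation machinery that is reused later (e.g.\ in Remark~\ref{obs} and the analysis of perturbed manifolds), so the detour through Proposition~\ref{hu3} is not wasted in the larger context. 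Your final remark that the full strength of Proposition~\ref{hu3} is unnecessary for isolation is accurate.
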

\begin{proof} By Proposition \ref{hu3}, let $g\colon X\to X$ be a bi-Lipschitz map and $S_1=g^{-1}\circ T\circ g = L +N_1$ with $0$ as an weakly hyperbolic point. Also, we have $N_{1,u}(x_s)=N_{1,s}(x_u)=0$ for all $x_u\in X_u$ and $x_s\in X_s$. Hence, for $x=x_u+x_s \in V_u\oplus V_s$ and $b+\gamma<1<a-\gamma$ as in Definition \ref{w_hyperbolic_fixed_point}, we have
\begin{align*}
&\|\pi_u S_1(x)\|=\|L_ux_u+N_{1,u}(x_u+x_{s})-N_{1,u}(x_{s})\|\geqslant (a-\gamma)\|x_u\|, \hbox{ and }\\
&\|\pi_sS_1(x)\|= \|L_sx_{s}+N_{1,s}(x_u+x_{s})-N_{1,s}(x_u)\|\leqslant (b+\gamma)\|x_{s}\|,
\end{align*}

Since $g^{-1}$ is Lipschitz continuous in $X$ and $g^{-1}(0)=0$, there exists $M\geqslant 1$ such that $\|g^{-1}(x)\|\leqslant M\|x\|$ for all $x\in X$. Let $\delta >0$ such that $B_{M\delta}^X(0) \subset V_u\oplus V_s$. If $\xi:\Z\to X$ is a global solution of $S_1$ in $B_{M\delta}^X(0)$ and there exists $m\in \Z$ such that $\pi_u\xi(m)\neq 0$, the fact that $\delta \geqslant \|\pi_uS_1^n\xi(m))_u\|\geqslant (a-\gamma)^n\|\pi_u\xi(m)\|$ gives us a contradiction, making $n\to \infty$. Thus, $\pi_u\xi(m)=0$ for all $m\in \Z$. On the other hand, since
\[
\|\pi_s\xi(m)\|=\|\pi_sS_1^n\xi(m-n)\|\leqslant (b+\gamma)^n\|\pi_s\xi(m-n)\|< M \delta(b+\gamma)^n,
\]
making $n\to \infty$ we obtain $\pi_s\xi(m)=0$ for each $m\in \mathbb{Z}$. Hence $0$ is the unique global solution of $S_1$ in $B_\delta^X(0)$.

Now, if $\phi\colon \mathbb{Z}\to X$ is a global solution of $S$ in $B_{\delta}^X(0)$ then $\xi = g^{-1}\circ \phi \colon \mathbb{Z}\to X$ is a global solution of $S_1$ in $B_{M\delta}^X(0)$. Hence $\xi \equiv 0$ which in turn implies that $\phi\equiv 0$.
\end{proof}

%\begin{remark}
%It follows from Lemma \ref{hu2} that in a neighborhood of the $\mathfrak{L}$-hiperbolic equilibrium we can expect that the phase diagram is homeomorphically equal to the saddle point, such as in $C^1$ case.
%\end{remark}

%\begin{corollary}\label{hiperbilicidadeuniforme}
%Nas hipóteses do Teorema \ref{hu3}, existe $\delta_2=\delta_2(\gamma,a,b)$ tal que se
%$\phi:\Z\to B_\rho^X(0)$ solução global de $T_1$ e $\rho\delta_2<\delta_1$, então $\la:=g^{-1}\circ \phi:\Z\to B_{\delta_1}^X(0)$ é solução global de $S_1$. Aqui, $\delta_1$ e $g:X\to X$ são dadas pelo Teorema \ref{hu3}. Consequentemente existe $\rho=\rho(\gamma,a,b)$ tal que $\{0\}$ é invariante maximal de $T_1$ em $B_{\rho}^X(0)$ e segue que esta propriedade é estável por perturbações Lipschitz. Em particular, isto prova o Teorema \ref{hu1}.
%\end{corollary}
%\begin{proof}
%
%Seja $0<\rho<\dfrac{\delta_1}{\delta_2}$ com $\delta_1,\delta_2$ dados na prova do Teorema \ref{hu3}. Então $\|g^{-1}(x)\|\leqslant \delta_2\|x\|$ para todo $x\in X$ e portanto $\phi:\Z\to B_\rho^X(0)$ solução global de $T_1$ implica que $\la:=g^{-1}\circ \phi:\Z\to B_{\delta_1}^X(0)$ é solução global de $S_1$. Em particular, dado $c\in (0,1)$, podemos tomar $\rho:=c\dfrac{\delta_1}{\delta_2}=\rho(a,b,\gamma)$.
%
%Assim, segue do Lema \ref{hu2} que $\phi(m)=0, \forall m\in \Z$ é a única solução global de $T_1$ em $B_{\rho}^X(0)$
%\end{proof}

Now, the main theorem of this subsection follows easily.

\begin{theorem}\label{hu1}
Let $T\in C(X)$ with an $\mathfrak{L}$-hyperbolic point $x^\ast$. Then $x^*$ is isolated.
\end{theorem}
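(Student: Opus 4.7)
The strategy is to translate the $\mathfrak{L}$-hyperbolic point $x^\ast$ to the origin and invoke Proposition \ref{hu2}, which already handles the isolation problem at $0$. All the work has essentially been done there; what remains is a bookkeeping argument to move the statement to a general point.

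More precisely, I would start by setting $S(x) = T(x+x^\ast) - x^\ast$ for $x \in X$, exactly as in Definition \ref{w_hyperbolic_fixed_point}. By the very definition of $\mathfrak{L}$-hyperbolicity of $x^\ast$ for $T$ (Definition \ref{hyperbolic_fixed_point}), the map $S$ admits a decomposition $S = L + N$ satisfying all three conditions of Definition \ref{w_hyperbolic_fixed_point} together with $\tn{Lip}(N) < \gamma_1$; hence $0$ is an $\mathfrak{L}$-hyperbolic point of $S$. Proposition \ref{hu2} then furnishes a $\delta > 0$ such that the only global solution of $S$ whose image lies in $B_\delta^X(0)$ is the constant solution $0$.

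The remaining point is to transfer this isolation back to $T$ through the bijection between global solutions. If $\xi\colon \mathbb{Z}\to X$ is a global solution of $T$ with $\xi(\mathbb{Z}) \subset B_\delta^X(x^\ast)$, then $\eta(m) := \xi(m) - x^\ast$ satisfies
\[
S(\eta(m)) = T(\eta(m) + x^\ast) - x^\ast = T(\xi(m)) - x^\ast = \xi(m+1) - x^\ast = \eta(m+1),
\]
so $\eta$ is a global solution of $S$ with $\eta(\mathbb{Z}) \subset B_\delta^X(0)$. By the previous step, $\eta \equiv 0$, hence $\xi \equiv x^\ast$. This says precisely that $x^\ast$ is the unique global solution of $T$ inside the neighborhood $B_\delta^X(x^\ast)$ of $\{x^\ast\} = \xi(\mathbb{Z})$, which is the definition of being isolated.

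There is no real obstacle here beyond verifying the compatibility between Definitions \ref{w_hyperbolic_fixed_point} and \ref{hyperbolic_fixed_point} at $x^\ast$ and at $0$: all of the heavy lifting (the conjugation by $g$, the unstable–stable splitting at the origin, and the contractive/expansive estimates on each factor) has already been absorbed into Proposition \ref{hu2}. The argument therefore reduces to the translation $x \mapsto x - x^\ast$, which preserves both the underlying metric and the notion of global solution.
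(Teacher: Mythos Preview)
Your proof is correct and follows exactly the intended route: translate $x^\ast$ to the origin via $S(x)=T(x+x^\ast)-x^\ast$, observe that $0$ is $\mathfrak{L}$-hyperbolic for $S$, apply Proposition \ref{hu2}, and pull the isolation back through the bijection $\xi\mapsto\xi-x^\ast$ between global solutions. The paper's one-line proof cites Proposition \ref{hu3}, which is almost certainly a typo for \ref{hu2}; your write-up makes explicit the translation argument that the paper leaves implicit.
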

\begin{proof}
It follows directly from Proposition \ref{hu3}.
\end{proof}

With this result, we are able to understand what happens near a $\mathfrak{L}$-hyperbolic point $x^*$, as follows.

\begin{definition}
We say that $T\in C(X)$ is \df{asymptotically compact} if the discrete semigroup $\{T^n\colon n\in \mathbb{N}\}$ is asymptotically compact; that is, if given sequences $n_k\to \infty$ in $\mathbb{N}$ and $\{x_k\}$ bounded in $X$ such that $\{T^{n_k}(x_k)\}$ is bounded in $X$, then $\{T^{n_k}(x_k)\}$ has a convergent subsequence.
\end{definition}

\begin{theorem}\label{hu1*}
Let $T\in C(X)$ an asymptotically compact map with $x^\ast$ as an $\mathfrak{L}$-hyperbolic equilibrium. Then there exists $\delta>0$ such that if $\xi:\Z\to X$ is a global solution of $\ T$ with $\xi(m)\in B_{\delta}^X(x^*)$ for all $m\geqslant 0$ $(m\leqslant 0)$, then $\xi(m)\to x^*$ as $m\to \infty$ $(m\to -\infty)$.
\end{theorem}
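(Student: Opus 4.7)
My plan is to combine the isolation of $\mathfrak{L}$-hyperbolic points (Theorem \ref{hu1}/Proposition \ref{hu2}) with the coordinate normalization of Proposition \ref{hu3}, handling the forward direction via an $\omega$-limit argument based on asymptotic compactness and the backward direction directly in the adapted coordinates. After translating so that $x^\ast=0$, I apply Proposition \ref{hu3} to obtain a bi-Lipschitz $g\colon X\to X$ with $g(0)=0$ such that $S_1=g^{-1}\circ S\circ g=L+N_1$ satisfies the axis-preserving property $N_{1,u}(x_s)=0=N_{1,s}(x_u)$ on $V_u\times V_s$. Let $\delta_{\rm iso}$ be the isolation radius from Theorem \ref{hu1} and let $M\geqslant 1$ satisfy $\|g^{-1}(x)\|\leqslant M\|x\|$; then I fix $\delta\in(0,\delta_{\rm iso})$ so small that $g^{-1}(B_\delta^X(0))\subset V_u\oplus V_s$.

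For the forward case, assume $\xi(m)\in B_\delta^X(x^\ast)$ for all $m\geqslant 0$. The positive orbit is bounded, so asymptotic compactness of $T$ gives that
\[
\omega(\xi)=\bigcap_{n\geqslant 0}\overline{\{\xi(m)\colon m\geqslant n\}}
\]
is a nonempty compact $T$-invariant set contained in $\overline{B_\delta^X(x^\ast)}$, with ${\rm dist}(\xi(m),\omega(\xi))\to 0$. By $T$-invariance of $\omega(\xi)$, each $y\in\omega(\xi)$ lies on a global solution $\zeta$ of $T$ with $\zeta(\mathbb{Z})\subset\omega(\xi)\subset B_{\delta_{\rm iso}}^X(x^\ast)$; Theorem \ref{hu1} then forces $\zeta\equiv x^\ast$, so $\omega(\xi)=\{x^\ast\}$ and $\xi(m)\to x^\ast$.

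For the backward case, asymptotic compactness does not provide a corresponding $\alpha$-limit, so I argue directly in the conjugate coordinates. Setting $\eta(m)=g^{-1}(\xi(m)-x^\ast)$ gives a global solution of $S_1$ with $\eta(m)\in V_u\oplus V_s$ for all $m\leqslant 0$, and the axis-preserving property together with $\tn{Lip}(N_1)\leqslant\gamma$ yield
\[
\|\pi_s\eta(m+1)\|\leqslant(b+\gamma)\|\pi_s\eta(m)\|,\qquad \|\pi_u\eta(m+1)\|\geqslant(a-\gamma)\|\pi_u\eta(m)\|.
\]
Iterating the first inequality backward from any $m\leqslant 0$, together with $b+\gamma<1$ and the bound $\|\eta(m)\|\leqslant M\delta$, forces $\pi_s\eta(m)=0$ for all $m\leqslant 0$. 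With $\eta(m)\in V_u$, iterating the second inequality backward and using $a-\gamma>1$ yields $\|\eta(-n)\|\leqslant(a-\gamma)^{-n}\|\eta(0)\|\to 0$, and continuity of $g$ at $0$ finally gives $\xi(m)\to x^\ast$ as $m\to-\infty$.

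The main obstacle is the inherent asymmetry: asymptotic compactness is a forward concept, making the $\omega$-limit approach natural for $m\geqslant 0$, but offering no analogous $\alpha$-limit for $m\leqslant 0$, so the backward direction must rely on the explicit exponential estimates coming from Proposition \ref{hu3}. A minor but necessary technical point is to pick a single $\delta$ that is simultaneously below $\delta_{\rm iso}$ and whose image under $g^{-1}$ sits inside $V_u\oplus V_s$, which is arranged using the Lipschitz bound on $g^{-1}$.
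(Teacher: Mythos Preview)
Your argument is correct. The forward case is essentially the paper's proof, rephrased through the $\omega$-limit set rather than an explicit subsequence construction: both hinge on producing a global solution of $T$ inside the isolation ball and invoking Theorem \ref{hu1}.

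For the backward case you take a genuinely different route. The paper simply says the case $m\leqslant 0$ is ``analogous'', and it is: since $\xi$ is a \emph{global} solution, every backward iterate can be written as a forward iterate of an earlier (bounded) point, e.g.\ $\xi(-n_k)=T^{k}\bigl(\xi(-n_k-k)\bigr)$, so asymptotic compactness does yield convergent subsequences of $\{\xi(-n_k)\}$ and hence a nonempty compact invariant $\alpha$-limit set to which Theorem \ref{hu1} applies exactly as before. Your remark that ``asymptotic compactness does not provide a corresponding $\alpha$-limit'' is thus a little misleading in this specific setting. That said, your alternative---passing to the conjugate coordinates of Proposition \ref{hu3} and using the explicit inequalities $\|\pi_s S_1(x)\|\leqslant(b+\gamma)\|x_s\|$ and $\|\pi_u S_1(x)\|\geqslant(a-\gamma)\|x_u\|$ to force first $\pi_s\eta(m)=0$ and then $\eta(-n)\to 0$---is perfectly valid and has the advantage of not using asymptotic compactness at all for $m\leqslant 0$. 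The paper's approach is more symmetric and economical; yours extracts more quantitative information (an explicit rate $(a-\gamma)^{-n}$) at the cost of invoking the coordinate change.
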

\begin{proof}
Let $\delta>0$ be as in Theorem \ref{hu1} and take $\delta_1=\frac{\delta}{2}>0$. Let $\xi$ be a global solution of $T$ such that $\xi(m)\in B_{\delta_1}^X(x^\ast)$ for each $m\geqslant 0$ and assume that $\xi(m)$ does not converge to $x^\ast$ as $m\to \infty$. Then there exist $\epsilon_0>0$ and a sequence $n_k\to \infty$ such that
$$
\|T^{n_k}(\xi(0))-x^\ast\| = \|\xi(n_k)-x^\ast\| \geqslant \epsilon_0, \hbox{ for each }k\in \mathbb{N}.
$$

From the asymptotically compactness of $T$, we can extract a subsequence if necessary of $\{T^{n_k}(\xi(0))\}$ which converges to a point $z\in \overline{B_{\delta_1}^X(x^\ast)}$ such that $\|z-x^\ast\|\geqslant \epsilon_0$. Define $\phi(m)=T^mz$ for each $m\geqslant 0$; clearly $\phi(m)\in \overline{B_{\delta_1}^X(x^\ast)}$ since for each $m\geqslant 0$, $\phi(m)=\lim_{k\to \infty}\xi(n_k+m)$.

The sequence $\{T^{n_k-1}(\xi(0))\}\subset B_{\delta_1}^X(x^\ast)$ also has a convergent subsequence, which we denote the same, to a point $z_{-1}\in \overline{B_{\delta_1}^X(x^\ast)}$. Clearly $Tz_{-1}=z$ and we define $\phi(-1)=z_{-1}$.

Continuing this process we define points $\phi(-m)$ in $\overline{B_{\delta_1}^X(x^\ast)}$, for $m>0$, such that $T\phi(-m)=\phi(-m+1)$. Thus $\phi$ is a global solution of $T$ in $B_{\delta}^X(x^\ast)$, and hence it must be the equilibrium solution $x^\ast$ by Theorem \ref{hu1}. Therefore $0=\|x^\ast-x^\ast\|=\|\phi(0)-x^\ast\|=\|z-x^\ast\|\geqslant \epsilon_0$, which gives us a contradiction.

The proof for the other case is analogous.
\end{proof}

\section{Autonomous perturbations of $\mathfrak{L}-$hyperbolic points} \label{sec:APHP}

In this section, we study the permanence of $\mathfrak{L}-$hyperbolic points under small Lipschitz continuous autonomous perturbations. To begin, we will precisely define what do we mean by {\it small perturbation}. 

\begin{definition}  \label{def:norms}
Let $X,Y$ be Banach spaces,  $U$ be a subset of $X$ and $T\colon U \to Y$. Define
\begin{equation*}
\|T\|_{U,Lip} = \sup_{\begin{smallmatrix}x,y\in U\\x\neq y\end{smallmatrix}}\frac{\|T(x)-T(y)\|}{\|x-y\|},
 \qquad \|T\|_{U,\infty}=\sup_{x\in U}\|T(x)\|,
 %=\sup_{\begin{smallmatrix}x\in U\\x\neq 0\end{smallmatrix}}\frac{\|T(x)\|}{\|x\|}},
 \end{equation*}
and also
\begin{equation} \label{eq:NormaT}
 \norma{T}_U =  \max\{\|T\|_{U,\infty},\|T\|_{U,Lip}\}.
\end{equation}

We say that a map $\tilde{T}\colon U\to Y$ is \df{$\epsilon$-Lipschitz close} to $T$ in $U$ if
\begin{equation*}
\norma{\tilde{T}-T}_U \leqslant \epsilon.
\end{equation*}
\end{definition}

\begin{proposition}\label{teoconv}
Let $S\in C(X)$ be such that $0$ is a weakly hyperbolic point of $S$ and assume that $S=L+N\colon U\to X$ is as in Definition \ref{w_hyperbolic_fixed_point}. Let $\delta >0$ be such that $\overline{B_\delta^X(0)} \subset U$ and choose $\epsilon >0$, $0<\delta_1<\delta$ such that
\[
(\epsilon +\gamma\delta_1) \|(I-L)^{-1}\| \leqslant \delta_1, (\epsilon +\gamma) \|(I-L)^{-1}\|<1 \hbox{ and } b+2(\epsilon+\gamma)<1<a-2(\epsilon+\gamma).
\]

If $S_1\in C(X)$ is $\epsilon$-close to $S$ then $S_1$ has a unique weakly hyperbolic point  $x_1^*\in  \overline{B_{\delta_1}^X(0)}$.
%%Thus, $0$ is a isolated equilibrium of $S$.
%Furthermore, if $0$ is a $\mathfrak{L}$-hyperbolic point of $S$ and $b+2(\gamma+\epsilon)<1<a-2(\gamma+\epsilon)$, then $x_1^*$ is a $\mathfrak{L}$-hyperbolic point of $S_1$, where $0<b<1<a$ are as in \eqref{eq:AB}.
\end{proposition}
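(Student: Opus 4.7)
The plan is to realize $x_1^*$ as the unique fixed point of an affine‑perturbed contraction on $\overline{B_{\delta_1}^X(0)}$, and then verify weak hyperbolicity of $x_1^*$ by translating $S_1$ and inspecting the resulting Lipschitz decomposition.

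First I would set $M = S_1 - S$, so the $\epsilon$-closeness hypothesis gives both $\|M\|_{U,\infty}\leqslant \epsilon$ and $\tn{Lip}(M|_U)\leqslant \epsilon$. Since $1\notin\sigma(L)$ the operator $I-L$ is invertible, and the equation $S_1(x)=x$ is equivalent to $x=F(x)$ with $F(x):=(I-L)^{-1}(N(x)+M(x))$. Using $N(0)=0$, $\tn{Lip}(N)\leqslant \gamma$ and $\|M\|_{U,\infty}\leqslant \epsilon$, the first hypothesis yields, for $\|x\|\leqslant \delta_1$,
\[
\|F(x)\| \leqslant \|(I-L)^{-1}\|(\gamma\delta_1+\epsilon) \leqslant \delta_1,
\]
so $F$ maps $\overline{B_{\delta_1}^X(0)}$ into itself. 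The second hypothesis gives $\tn{Lip}(F)\leqslant \|(I-L)^{-1}\|(\gamma+\epsilon)<1$, so Banach's fixed point theorem produces a unique fixed point $x_1^*\in \overline{B_{\delta_1}^X(0)}$ of $S_1$.

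To verify that $x_1^*$ is a weakly hyperbolic point of $S_1$, I would introduce $\tilde S(x):=S_1(x+x_1^*)-x_1^*$ and set $\tilde N(x):=\tilde S(x)-Lx$, so that $\tilde S=L+\tilde N$ and, because $S_1(x_1^*)=x_1^*$, $\tilde N(0)=0$. Expanding $S_1=L+N+M$ gives the identity
\[
\tilde N(x)-\tilde N(y) = \bigl[N(x+x_1^*)-N(y+x_1^*)\bigr] + \bigl[M(x+x_1^*)-M(y+x_1^*)\bigr],
\]
so on the neighborhood $\tilde U:=B_{\delta-\delta_1}^X(0)$ of $0$ (whose translate by $x_1^*$ lies in $U$) one obtains $\tn{Lip}(\tilde N|_{\tilde U})\leqslant \gamma+\epsilon$. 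The third hypothesis $b+2(\epsilon+\gamma)<1<a-2(\epsilon+\gamma)$ is precisely the small-Lipschitz-constant condition (items (i)--(ii) of Definition \ref{w_hyperbolic_fixed_point}) with the new constant $\gamma+\epsilon$, and the second hypothesis supplies item (iii). Uniqueness of $x_1^*$ as a weakly hyperbolic point in $\overline{B_{\delta_1}^X(0)}$ is immediate, because any such point is in particular a fixed point of $F$ there.

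The only real obstacle is the bookkeeping of domains: $N$ and $M$ are controlled only on $U$, which forces the contraction argument to stay inside $\overline{B_{\delta_1}^X(0)}\subset U$, and after the translation to $x_1^*$ the Lipschitz estimate for $\tilde N$ is only valid on a shrunken neighborhood $\tilde U$ with $\tilde U+x_1^*\subset U$. The three quantitative inequalities in the statement are exactly tailored so that these domain constraints are compatible simultaneously with the contraction step and with the verification of Definition \ref{w_hyperbolic_fixed_point} at $x_1^*$.
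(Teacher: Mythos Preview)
Your proposal is correct and follows essentially the same approach as the paper: the map $F(x)=(I-L)^{-1}(N(x)+M(x))$ coincides with the paper's $\phi(x)=(I-L)^{-1}(S_1(x)-S(x)+N(x))$, and your translated nonlinearity $\tilde N$ is the paper's $N_1$ written in slightly different form. The only cosmetic difference is that the paper phrases the shrunken neighborhood as $B_{\delta_2}^X(0)$ with $B_{\delta_2}^X(x_1^*)\subset B_\delta^X(0)$, which your explicit choice $\tilde U=B_{\delta-\delta_1}^X(0)$ realizes.
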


\begin{proof}
Let $\phi \colon \overline{B_{\delta_1}^X(0)}\to X$ be given by $\phi(x)=(I-L)^{-1}(S_1(x)-S(x)+N(x))$ and note that $S_1(x)=x$ iff $\phi(x)=x$. Since $N(0)=0$ we have
$$
\|\phi(x)\|\leqslant \|(I-L)^{-1}\|\cdot \|S_1(x)-S(x)+N(x)-N(0)\|\leqslant \|(I-L)^{-1}\|(\epsilon+\gamma\delta_1)\leqslant \delta_1;
$$
that is, $\phi(\overline{B_{\delta_1}^X(0)})\subset \overline{B_{\delta_1}^X(0)}$. Moreover
\begin{align}
\|\phi(x)-\phi(y)\|&\leqslant \|(I-L)^{-1}\| \cdot \|[(S_1-S)(x) - (S_1-S)(y)+N(x)-N(y)\| \nonumber \\
&\leqslant \|(I-L)^{-1}\|(\e+\gamma)\cdot \|x-y\|, \nonumber
\end{align}
for all $x,y\in B_{\delta_1}^X(0)$. Thus $\phi \colon \overline{B_{\delta_1}^X(0)}\to \overline{B_{\delta_1}^X(0)}$ is a contraction and possesses a unique fixed point $x_1^*$ in $\overline{B_{\delta_1}^X(0)}$.

Now, let $\delta_2>0$ be such that $B_{\delta_2}^X(x^\ast_1)\subset B_\delta^X(0)$ and consider the map $R(x)=S_1(x+x^\ast_1)-x^\ast_1$, for $x$ in $B_{\delta_2}^X(0)$. We have
\begin{align}
R(x) &= S_1(x+x^\ast_1)-S(x+x^\ast_1) + S(x+x^\ast_1)-x^\ast_1 = Lx+N_1(x),\nonumber
\end{align}
where $N\colon B_{\delta_2}^X(0)\to X$ is given by
$$
N_1(x)=N(x+x_1^*)+(S_1-S)(x+x_1^*)+L(x_1^*)-x_1^*.
$$

Clearly $N_1(0)=0$ and ${\rm Lip}(N_1) = \gamma + \epsilon$. Hence all the conditions of Definition \ref{w_hyperbolic_fixed_point} are satisfied, and $x^\ast_1$ is an weakly hyperbolic point of $S_1$.
\end{proof}

%Note that with this result, we have just proved that if $x^\ast$ is a $\mathfrak{L}$-hyperbolic point of a map $T$, then $x^\ast$ is an \df{isolated equilibrium}; that is, there exists a neighborhood $U$ of $x^\ast$ in $X$ such that $x^\ast$ is the only equilibrium point of $T$ in $U$.

\begin{corollary}\label{convvarlip}
Let $T_0\in C(X)$ and $x_0^\ast$ an weakly hyperbolic point for $T_0$ and fix $U=B_{\delta}^X(x_0^\ast)$, for some $\delta>0$ sufficiently small. Assume that for each $\eta\in (0,1]$ we have a map $T_\eta\in C(X)$ with a set  $\mathcal{E}_\eta$ of equilibrium points, and suppose that $\norma{T_{\eta}-T_0}_{U}\to 0$ as $\eta\to 0^+$. Then there exist $\eta_0>0$ and weakly hyperbolic points $x^*_\eta\in \mathcal{E}_\eta$ of $T_\eta$, for each $0\leqslant \eta\leqslant \eta_0$, such that $x^*_\eta\to x^*_0$ as $\eta\to 0^+$. {\color{black}In particular, if $\mathcal{E}_0$ is compact and $U=\mathcal{O}_\delta(\mathcal{E}_0)$, then
$\{\mathcal{E}_\eta\}_{\eta\in[0,1]}$ is lower semicontinuous at $\eta=0$.

Moreover, if $\cup_{0\leqslant \eta\leqslant \eta_1}\mathcal{E}_\eta$ is precompact
for some $\eta_1>0$, then $\{\mathcal{E}_\eta\}_{\eta\in[0,1]}$ is upper semicontinuous
at $\eta=0$. Consequently, if $\mathcal{E}_0$ has only $\mathfrak{L}$-hyperbolic
equilibrium points, then there exist $\eta_1>0$ such that $\mathcal{E}_\eta$ has the
same number of elements of the $\mathcal{E}_0$ for all $0\leqslant \eta\leqslant \eta_1$.
In other words, there exists $\eta_1>0$ and $p\in \N$ such that $\mathcal{E}_\eta=
\{x_{1,\eta}^*,\hdots,x_{p,\eta}^{*}\}$ has only $\mathfrak{L}$-hyperbolic fixed points
for all $\eta\in [0,\eta_1]$ and
\[
\max_{i=1,\hdots,p}\|x_{i,\eta}^*-x_{i,0}^*\|\stackrel{\eta\to 0}{\longrightarrow}0.
\]}
\end{corollary}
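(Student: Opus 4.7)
The plan is to derive all four conclusions from Proposition \ref{teoconv} combined with elementary compactness arguments. For the existence of $x_\eta^\ast$, I would translate $x_0^\ast$ to the origin by setting $S(x)=T_0(x+x_0^\ast)-x_0^\ast$ and $S_\eta(x)=T_\eta(x+x_0^\ast)-x_0^\ast$. By translation invariance of both the sup and Lipschitz seminorms, $\norma{S_\eta-S}_{U-x_0^\ast}=\norma{T_\eta-T_0}_U\to 0$. Fixing $\delta_1\in(0,\delta)$ and $\epsilon>0$ satisfying the three inequalities required by Proposition \ref{teoconv}, then choosing $\eta_0$ so that $\norma{S_\eta-S}\leqslant\epsilon$ whenever $\eta\leqslant\eta_0$, the proposition produces a unique fixed point $y_\eta$ of $S_\eta$ in $\overline{B_{\delta_1}^X(0)}$ which is weakly hyperbolic. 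Translating back, $x_\eta^\ast=y_\eta+x_0^\ast\in \mathcal{E}_\eta$ satisfies $\|x_\eta^\ast-x_0^\ast\|\leqslant\delta_1$; letting $\delta_1\to 0$ and shrinking $\eta_0$ accordingly yields $x_\eta^\ast\to x_0^\ast$.

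For lower semicontinuity I would apply the previous step at each $z\in \mathcal{E}_0$ (interpreting the hypothesis as asserting that every point of $\mathcal{E}_0$ is weakly hyperbolic), then use compactness of $\mathcal{E}_0$ to extract a finite $\epsilon/2$-cover by balls centered at $z_1,\dots,z_k$ and select $\eta_0$ so that each $z_i$ admits a companion $z_{i,\eta}\in \mathcal{E}_\eta$ with $\|z_{i,\eta}-z_i\|<\epsilon/2$ for $\eta\leqslant\eta_0$; the triangle inequality then gives $\sup_{z\in\mathcal{E}_0}d(z,\mathcal{E}_\eta)<\epsilon$. For upper semicontinuity I would argue by contradiction: a sequence $z_k\in \mathcal{E}_{\eta_k}$ with $\eta_k\to 0$ and $d(z_k,\mathcal{E}_0)\geqslant\epsilon$ has a convergent subsequence $z_k\to z$ by precompactness, and passing to the limit in $T_{\eta_k}(z_k)=z_k$ — using $\norma{T_{\eta_k}-T_0}_U\to 0$ together with continuity of $T_0$ — gives $T_0(z)=z$, i.e.\ $z\in \mathcal{E}_0$, contradicting the separation.

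For the last assertion I would first invoke $\mathfrak{L}$-hyperbolicity together with Theorem \ref{hu1} to conclude that every point of $\mathcal{E}_0$ is isolated in $X$, hence isolated inside the compact set $\mathcal{E}_0$; therefore $\mathcal{E}_0=\{x_{1,0}^\ast,\dots,x_{p,0}^\ast\}$ is finite. I would then choose pairwise disjoint balls $B_i=B_{\delta_1}^X(x_{i,0}^\ast)$ with $\delta_1$ small enough for Proposition \ref{teoconv} to apply simultaneously at every $x_{i,0}^\ast$, producing for $\eta\leqslant\eta_1$ a unique weakly hyperbolic $x_{i,\eta}^\ast\in B_i$; upper semicontinuity forces $\mathcal{E}_\eta\subset\bigcup_i B_i$ for $\eta$ sufficiently small, so $\mathcal{E}_\eta=\{x_{1,\eta}^\ast,\dots,x_{p,\eta}^\ast\}$, and the first step applied at each $x_{i,0}^\ast$ with a common $\eta_1$ yields $\max_i\|x_{i,\eta}^\ast-x_{i,0}^\ast\|\to 0$.

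The main obstacle I anticipate is upgrading each $x_{i,\eta}^\ast$ from merely weakly hyperbolic to $\mathfrak{L}$-hyperbolic. Inspecting the proof of Proposition \ref{teoconv}, the nonlinear part of $x\mapsto T_\eta(x+x_{i,\eta}^\ast)-x_{i,\eta}^\ast$ has Lipschitz constant at most $\gamma_i+\epsilon$, where $\gamma_i$ is the original Lipschitz constant associated to $x_{i,0}^\ast$; since $\gamma_i<\gamma_1$ by the $\mathfrak{L}$-hyperbolicity hypothesis, shrinking $\eta_1$ keeps $\gamma_i+\epsilon<\gamma_1$ for every $i=1,\dots,p$, so the bound of Proposition \ref{hu3} is still satisfied and each $x_{i,\eta}^\ast$ is $\mathfrak{L}$-hyperbolic. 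Care must also be taken so that the single $\eta_1$ works simultaneously across all $i$, but this is automatic once $p$ is finite and the constants $\gamma_i,a_i,b_i,\delta_i$ vary over a finite set.
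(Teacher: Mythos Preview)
Your argument for the first assertion is exactly the paper's: apply Proposition~\ref{teoconv} with $\delta_1$ shrinking to zero as $\eta\to 0^+$. The paper's written proof stops there and does not address the lower/upper semicontinuity or the final $\mathfrak{L}$-hyperbolicity claims at all; those additional statements (the text in black) appear to have been appended without a corresponding expansion of the proof.

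Your arguments for the remaining parts are correct and are in fact the same ones the paper uses one theorem later, in Theorem~\ref{teo:CHP} and Corollary~\ref{cor:Continuity}: upper semicontinuity by contradiction via precompactness and the estimate $\|x_0-T_0(x_0)\|\leqslant (1+\|T_{\eta_k}\|_{V,Lip})\|x_0-x_k\|+\|T_{\eta_k}-T_0\|_{V,\infty}$, and uniqueness inside each ball from Proposition~\ref{teoconv} to pin down $\#\mathcal{E}_\eta=p$. Your observation that the perturbed nonlinear part has Lipschitz constant $\gamma_i+\epsilon<\gamma_1$ is exactly what is needed and is implicit in Remark~\ref{rem:UnifConstants}.

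One genuine subtlety you should make explicit in the upper semicontinuity step: your contradiction argument passes to the limit in $T_{\eta_k}(z_k)=z_k$ using $\norma{T_{\eta_k}-T_0}_U\to 0$, but the hypothesis only gives this on $U=\mathcal{O}_\delta(\mathcal{E}_0)$, while your sequence $z_k$ satisfies $d(z_k,\mathcal{E}_0)\geqslant\epsilon$ and so may lie outside $U$. The paper sidesteps this in Theorem~\ref{teo:CHP} by strengthening the hypothesis to convergence on a neighborhood $V$ of $\overline{\cup_\eta\mathcal{E}_\eta}$; as stated in the corollary the claim is slightly underspecified, and you should either assume that stronger hypothesis or note that continuity of $T_0$ alone (without the $\norma{\cdot}_U$ control near $z$) does not close the gap.
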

\begin{proof}
From Proposition \ref{teoconv}, for each $T_\eta$ we can choose $\delta_\eta>0$ such that in $B_{\delta_\eta}^X(x_0^\ast)$ there exists a unique equilibrium point $x_\eta^\ast$, and $x_\eta^\ast$ is a weakly hyperbolic point for $T_\eta$. Choosing $0<\delta_\eta\leqslant \eta$ for each $\eta \in (0,1]$ we have $\|x_\eta^\ast-x_0^\ast\|\to 0$ as $\eta\to 0^+$.
\end{proof}

\begin{remark}\label{rem:UnifConstants}
We note that, from Proposition \ref{teoconv}, if $\gamma,a,b>0$ and $U$ are given as in Definition \ref{w_hyperbolic_fixed_point} for the weakly hyperbolic point $x^\ast_0$ of $T_0$ and $\delta>0$ is such that $\overline{B_\delta^X(0)}\subset U$, there exists $\eta_0>0$ and constants $\tilde{\gamma},\tilde{a},\tilde{b}>0$ and a neighborhood $\tilde{U}\subset U$ with $\gamma<\tilde{\gamma}$, $\tilde{b}<b<1<\tilde{a}<a$ such that they fullfill the conditions of Definition \ref{w_hyperbolic_fixed_point} for $x^\ast_\eta$, for all $0\leqslant \eta\leqslant \eta_0$.
\end{remark}

With these previous results, we are able to prove the continuity at $\eta=0$ of the family $\{\mathcal{E}_\eta\}_{\eta\in[0,1]}$ of equilibria for the maps $T_\eta$, assuming the {\it Lipschitz convergence} of $T_\eta$ to $T_0$; that is, the convergence of $T_\eta$ to $T_0$ in the norm $\norma{\cdot}_V$, for some suitable open set $V$.

\begin{theorem} \label{teo:CHP}
With the conditions of Proposition \ref{teoconv} assume that $\mathcal{E}_0$ is finite, $\cup_{\eta\in [0,1]}\mathcal{E}_\eta$ is precompact in $X$ and that $\norma{T_\eta-T_0}_V\to 0$ as $\eta \to 0^+$ for some neighborhood $V$ of $\overline{\cup_{\eta\in[0,1]}\mathcal{E}_\eta}$. Then there exists $\eta_0>0$ such that $\mathcal{E}_\eta$ is finite and possesses the same number of elements as $\mathcal{E}_0$ for each $0\leqslant \eta\leqslant \eta_0$.
\end{theorem}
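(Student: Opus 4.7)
Since $\mathcal{E}_0$ is finite, I would write $\mathcal{E}_0 = \{x_1^*, \ldots, x_p^*\}$, noting that each $x_i^*$ is weakly hyperbolic (indeed $\mathfrak{L}$-hyperbolic, though only the weaker property is needed here). The plan is to combine two ingredients. The first, delivering lower semicontinuity, is Proposition~\ref{teoconv}: at each $x_i^*$ it supplies a small ball containing a unique equilibrium of $T_\eta$ for all small $\eta$. The second is an upper bound: any other equilibrium of $T_\eta$ would, for $\eta$ small, have to cluster at some $x_j^*$ by precompactness of $\bigcup_{\eta\in[0,1]}\mathcal{E}_\eta$ together with uniform convergence of $T_\eta$ to $T_0$ on $V$, and would therefore eventually lie inside the uniqueness ball around $x_j^*$, a contradiction.

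Concretely, I first choose pairwise disjoint closed balls $\overline{B_{\delta_i}^X(x_i^*)} \subset V$ and apply Proposition~\ref{teoconv} at each $x_i^*$. This produces $\eta_i > 0$ and $0 < \delta_i' \leqslant \delta_i$ such that for every $0 \leqslant \eta \leqslant \eta_i$ the map $T_\eta$ has a \emph{unique} fixed point $x_{i,\eta}^* \in \overline{B_{\delta_i'}^X(x_i^*)}$, and this fixed point is itself weakly hyperbolic. Setting $\eta_0' = \min_{1 \leqslant i \leqslant p} \eta_i$, for every $0 \leqslant \eta \leqslant \eta_0'$ the $p$ points $x_{1,\eta}^*, \ldots, x_{p,\eta}^*$ are distinct elements of $\mathcal{E}_\eta$.

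For the upper bound, I would argue by contradiction: suppose there exist sequences $\eta_n \to 0^+$ and $y_n \in \mathcal{E}_{\eta_n} \setminus \bigcup_{i=1}^p \overline{B_{\delta_i'}^X(x_i^*)}$. Precompactness of $\bigcup_{\eta\in[0,1]}\mathcal{E}_\eta$ gives, along a subsequence, $y_n \to y^* \in \overline{\bigcup_{\eta\in[0,1]} \mathcal{E}_\eta} \subset V$. For $n$ large one then has $y_n \in V$, and using $T_{\eta_n}(y_n) = y_n$,
\begin{equation*}
\|T_0(y_n) - y_n\| = \|T_0(y_n) - T_{\eta_n}(y_n)\| \leqslant \norma{T_0 - T_{\eta_n}}_V \to 0,
\end{equation*}
so $T_0(y_n) \to y^*$; continuity of $T_0$ then forces $T_0(y^*) = y^*$, whence $y^* = x_j^*$ for some $j$. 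But this in turn forces $y_n \in \overline{B_{\delta_j'}^X(x_j^*)}$ for all large $n$, contradicting the choice of $y_n$. Hence some $\eta_0 \in (0,\eta_0']$ satisfies $\mathcal{E}_\eta \subset \bigcup_i \overline{B_{\delta_i'}^X(x_i^*)}$ for all $0 \leqslant \eta \leqslant \eta_0$, and uniqueness within each ball yields $\mathcal{E}_\eta = \{x_{1,\eta}^*, \ldots, x_{p,\eta}^*\}$. The main obstacle is precisely this upper-semicontinuity step: precompactness of $\bigcup \mathcal{E}_\eta$ is what prevents extra equilibria from escaping to infinity, while Lipschitz convergence on a full neighborhood of $\overline{\bigcup \mathcal{E}_\eta}$ (rather than merely near each individual $x_i^*$, as in Proposition~\ref{teoconv}) is what allows the fixed-point equation to be passed to the limit at an arbitrary cluster point of perturbed equilibria.
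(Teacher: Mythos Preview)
Your proof is correct and follows essentially the same two-step scheme as the paper: apply Proposition~\ref{teoconv} locally at each $x_i^*$ to obtain existence and uniqueness of perturbed equilibria in small balls, and then run a compactness/contradiction argument to rule out any additional equilibria. The only cosmetic difference is in how the limit fixed-point equation is obtained: you estimate $\|T_0(y_n)-y_n\|$ directly via the sup-norm part of $\norma{T_{\eta_n}-T_0}_V$ and then let $n\to\infty$, whereas the paper estimates $\|x_0-T_0(x_0)\|$ at the limit point using both the Lipschitz and sup parts of the norm; both routes give $T_0(y^*)=y^*$.
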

\begin{proof}
From Theorem \ref{teoconv} and our hypotheses, it is clear that there exists $\delta>0$ such that
\begin{itemize}
\item[-] for each $x^\ast\in \mathcal{E}_0$ there exists a unique equilibrium point $x_\eta^\ast$ of $T_\eta$ in $B_{\delta/2}^X(x^\ast)$ ;
\item[-] $B_{\delta}^X(x)\cap B_{\delta}^X(y)=\varnothing$ for $x,y\in \mathcal{E}_0$ and $x\neq y$ and
\item[-]  $\norma{T_\eta-T_0}_{\mathcal{O}_\delta(\mathcal{E}_0)}\to 0$ as $\eta \to 0^+$.
\end{itemize}

Now assume that there exist a sequence $\eta_k\to 0^+$ as $k\to \infty$ and equilibrium points $x_k\in \mathcal{E}_{\eta_k}$ for each $k\in \mathbb{N}$ such that $x_k \notin \mathcal{O}_\delta(\mathcal{E}_0)$. From the precompactness of $\cup_{\eta\in [0,1]}\mathcal{E}_\eta$, we can extract a subsequence, which we call the same, and a point $x_0\in \overline{\cup_{\eta\in [0,1]}\mathcal{E}_\eta}$, such that $x_k\to x_0$ as $k\to \infty$. But hence
\begin{align*}
\|x_0-T_0(x_0)\|& \leqslant \|x_0-x_k\|+\|T_{\eta_k}(x_k)-T_{\eta_k}(x_0)\|+\|T_{\eta_k}(x_0)-T_0(x_0)\| \\
& \leqslant (1+\|T_{\eta_k}\|_{V,Lip})\|x_0-x_k\| + \|T_{\eta_k}-T_0\|_{V,\infty}\to 0, \hbox{ as } \eta\to 0^+,
\end{align*}
since $\norma{T_{\eta_k}-T_0}_V\to 0$ as $k\to \infty$, and hence $x_0$ an equilibrium point of $T_0$ that lies in $X\setminus \mathcal{O}_{\delta/2}(\mathcal{E}_0)$, which gives us a contradiction and completes the result.
\end{proof}

With this theorem, the next result is straightforward.

\begin{corollary} \label{cor:Continuity}
The family $\{\mathcal{E}_\eta\}_{0\leqslant \eta\leqslant 1}$ is continuous at $\eta=0$.
\end{corollary}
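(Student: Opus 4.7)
The plan is to observe that continuity at $\eta=0$ is equivalent to simultaneous upper and lower semicontinuity of the family $\{\mathcal{E}_\eta\}$ at $\eta = 0$, and that each of these two facts is essentially contained in the preceding results of the section. The lower semicontinuity is already asserted at the end of Corollary \ref{convvarlip}: for each $x^\ast_{i,0} \in \mathcal{E}_0$ we obtain a selection $x^\ast_{i,\eta} \in \mathcal{E}_\eta$ with $x^\ast_{i,\eta} \to x^\ast_{i,0}$ as $\eta \to 0^+$, which immediately gives $\mathrm{dist}_H(\mathcal{E}_0,\mathcal{E}_\eta)\to 0$.

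For the upper semicontinuity, I would essentially repeat the contradiction argument already used in the proof of Theorem \ref{teo:CHP}. Fix an arbitrary $\delta>0$ smaller than the separation constant produced by Theorem \ref{teo:CHP}, and suppose for contradiction that there is a sequence $\eta_k \to 0^+$ and points $x_k \in \mathcal{E}_{\eta_k}$ with $\mathrm{dist}(x_k,\mathcal{E}_0)\geqslant \delta$. By the assumed precompactness of $\bigcup_{\eta\in[0,1]}\mathcal{E}_\eta$, pass to a convergent subsequence $x_k \to x_0$ in the closure of that union, so in particular $x_0 \in V$. Using $T_{\eta_k}(x_k) = x_k$ and the bound
\[
\|x_0 - T_0(x_0)\| \leqslant (1+\|T_{\eta_k}\|_{V,\mathrm{Lip}})\,\|x_0-x_k\| + \|T_{\eta_k}-T_0\|_{V,\infty},
\]
together with $\norma{T_{\eta_k}-T_0}_V \to 0$, conclude that $x_0 \in \mathcal{E}_0$, contradicting $\mathrm{dist}(x_k,\mathcal{E}_0)\geqslant \delta$ for all $k$.

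Combining these two facts yields $\mathrm{dist}_H(\mathcal{E}_\eta,\mathcal{E}_0) + \mathrm{dist}_H(\mathcal{E}_0,\mathcal{E}_\eta) \to 0$ as $\eta\to 0^+$, which is the desired continuity. I do not expect any serious obstacle here: Theorem \ref{teo:CHP} and Corollary \ref{convvarlip} already carry all the real content (the cardinality stabilizes and a selection of equilibria of $T_\eta$ can be made converging to each equilibrium of $T_0$), and the corollary only needs to assemble these ingredients; the only thing to be slightly careful about is to apply the limit argument inside the neighborhood $V$ on which the Lipschitz convergence $\norma{T_\eta-T_0}_V \to 0$ is available, which is guaranteed because $\overline{\bigcup_{\eta\in [0,1]}\mathcal{E}_\eta}\subset V$ by hypothesis of Theorem \ref{teo:CHP}.
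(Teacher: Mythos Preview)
Your proposal is correct and matches the paper's approach: the paper states the corollary as a straightforward consequence of Theorem \ref{teo:CHP} (and implicitly Corollary \ref{convvarlip}) without giving any additional argument, and your write-up simply makes explicit the upper/lower semicontinuity ingredients that are already contained in those results.
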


\section{Autonomous perturbations of invariant manifolds} \label{sec:APIM}

Our goal in this section is to prove the continuity at $\eta=0$  of the family of unstable sets $\{W^{{\rm u},\eta}_{loc}(x^{\ast}_\eta)\}_{\eta\in [0,1]}$ of maps $T_\eta$ near an $\mathfrak{L}$-hyperbolic point $x^\ast_\eta$, assuming that $T_\eta$ converges to $T_0$ as $\eta\to 0^+$ in the Lipschitz norm of \eqref{eq:NormaT}. Using the consideration of Section \ref{sec:InvManifolds}, it is sufficient to prove this continuity for {\it global invariant manifolds}, and that is the theory that will be present in this section.

\begin{remark}\label{obs}$ $
\begin{itemize}
%  \item Para provarmos a convergência das variedades locais (estável e instável), podemos supor que a vizinhança $V$ da definição de $\mathfrak{L}$-hiperbólico é $X$. De fato, pela prova do Lema \ref{bilipschitzaberto} podemos fazer um truncamento dos semigrupos na bola de raio $\delta>0$ tal que o ponto de equilíbrio do limite seja $\mathfrak{L}$-hiperbólico e estender radialmente (vide \eqref{extençãoradial}). Assim, a convergência das variedades dos semigrupos truncados corresponte á convergência das variedades locais dos semigrupos iniciais.
  \item[\bf 1.] It is clear that if $T\in C(X)$ has an $\mathfrak{L}$-hyperbolic point $x^\ast$ and $g\colon X\to X$ is a bi-Lipschitz maps with $g(x^\ast)=x^\ast$, then $x^\ast$ is also an weakly hyperbolic point for $S=g^{-1}\circ T\circ g$ (see Proposition \ref{hu3}).
%  {\color{red}(Como vemos no Theorem 5.4 da versão antiga, isto depende de $\gamma\leqslant \gamma_1$)}.
  Moreover, $W^{S,\rm i}(x^\ast)=g(W^{T,\rm i}(x^{\ast}))$ for $i=u,s$, where $W^{H,\rm i}(x^\ast)$ is the manifold associated to the map $H=S$ or $T$.

%  A convergência lipschitz das variedades estáveis e instáveis é invariante por mudança de variável lipschitz. De fato, se $h\in C(X)$ é bi-lipschitz e $S_\eta:=h \circ T_\eta \circ h^{-1}$, então $W^{u}(x_\eta,S_\eta)=h W^{u}(x_\eta,T_\eta)$. Note que $S_0(x):=T(x+x_0^*)-x_0^*=h\circ T\circ h^{-1}$ com $h(x)=x-x_0^*$.
  \item[\bf 2.] Again using Proposition \ref{hu3}, the bi-Lipschitz map $g$ can be chosen such that $W^{S,\rm i}(x^\ast)=X_i$, for $i=u,s$.
%  Existe uma mudança de variáveis bi-lipschitz que 'lineariza' a variedade instável e estável locais. Mais especificamente, podemos supor $ W^u(S_{0},0)=X_{u}$ e $W^s(S_{0},0)=X_{s}$. Vide Teorema \ref{hu3}. Com isto, basta mostrarmos que $\|\theta_{\eta}\|_{\tn{Lip}}\to 0$ onde
%      \[
%      W^u(T_{\eta},x_\eta^*)=\{x_\eta^*+\xi+\theta_\eta(\xi):\xi\in X_{u}\}
%      \]
%      visto que já temos $x_\eta^*\to x_0^*$.
\end{itemize}
\end{remark}

The following lemma is quite technical but very important. Hence, in order not to disrupt the line of study, we will leave its proof for the appendix, see \ref{Lemma52}.

\begin{lemma}\label{convvarlipfinal_lem}
Assume that $\{T_\eta\}_{\eta\in[0,1]}$ is a family of maps in $C(X)$ such that each $T_\eta$ has a global attractor $\mathcal{A}_\eta$ and $\cup_{\eta\in [0,1]}\mathcal{A}_\eta$ is bounded in $X$. Assume that there exists a neighborhood $U$ of $\cup_{\eta\in[0,1]}\mathcal{A}_\eta$ such that $\norma{T_\eta-T_0}_U\to 0$ as $\eta \to 0^+$. Also, assume that each $T_\eta$ has an $\mathfrak{L}$-hyperbolic point $x^\ast_\eta$ and the parameters given in Definition \ref{hyperbolic_fixed_point} can be taken uniformly with respect to $\eta\in[0,1]$\footnote[1]{This can be made true using Remark \ref{rem:UnifConstants}}. Suppose that $N_{0,u} = \pi_uN_0$ and $N_{0,s}=(I-\pi_u)N_0$ are such that for any fixed neighborhoods $W_u$ and $W_s$ of $0$ in $X_u$, $X_s$ respectively we have
\begin{equation} \label{hyp:N}
\|N_{0,u}\|_{\overline{B^{X_u}_r(0)}\times W_s,Lip} \to 0 \quad \hbox{ and } \|N_{0,s}\|_{W_u\times \overline{B^{X_s}_r(0)},Lip}\to 0 \quad \hbox{ as } r\to 0^+.
\end{equation}

Finally assume that $x^\ast_\eta\to x_0^\ast$ as $\eta\to 0^+$, then there exist a neighborhood $V_u$ of $0$ in $X_u$ and Lipschitz continuous maps $\theta_\eta\colon V_u\to X_s$ for sufficiently small $\eta$ such that
$$
W^{\rm u,\eta}_{loc}(x^{\ast}_\eta)=\{x_\eta^\ast+\xi+\theta_\eta(\xi)\colon \xi \in V_u\} \hbox{ and }\norma{\theta_\eta-\theta_0}_{V_u}\to 0, \hbox{ as } \eta\to 0^+.
$$

Analogously, there exist a neighborhood $V_s$ of $0$ in $X_s$ and maps $\sigma_\eta\colon X_s\to X_u$ such that
$$
W^{\rm s,\eta}_{loc}(x^{\ast}_\eta)=\{x_\eta^\ast+\sigma_\eta(\mu)+\mu\colon \mu \in V_s\}\hbox{ and }\norma{\sigma_\eta-\sigma_0}_{V_s}\to 0, \hbox{ as } \eta\to 0^+.
$$
%Seja $T_\eta\in C(X)$, $0\leqslant \eta\leqslant 1$, $\|T_\eta-T_0\|_{\infty}\stackrel{\eta\to 0}{\to 0}$, $0$ equilíbrio $\mathfrak{L}$-hiperbólico de $T_0=L+N_0$ como na definição \ref{hyperbolic_fixed_point}, $W^u(T_0,0)=X_{u}$. Suponha que exista $\eta_0>0$ tal que para cada $0\leqslant \eta\leqslant \eta_0$, $T_\eta$ tem um equilíbrio $\mathfrak{L}$-hiperbólico $x_\eta^*$ com parâmetros $\gamma,a,b,\infty$ independentes de $\eta$ e $x_\eta^*\stackrel{\eta\to 0}{\to 0}$. Então $W^u(T_\eta,x_\eta^*)\to W^u(T_0,0)$ uniformemente. Especificamente, existem aplicações lipschitz $\theta_\eta:X_{u}\to X_{s}$, $0\leqslant \eta\leqslant \eta_0$, tais que
%\[
%W^u(T_\eta,x_\eta^*)=\{x_\eta^*+\xi+\theta_\eta(\xi): \xi\in X_u\}
%\]
%com $\|\theta_\eta\|_{\infty}\stackrel{\eta \to 0}{\longrightarrow}0$. Analogamente, se $W^{s}(T_\eta,x_\eta^*)=X_s$, então existem aplicações lipschitz $\sigma_\eta:X_{s}\to X_{u}$, $0\leqslant \eta\leqslant \eta_0$, tais que
%\[
%W^s(T_\eta,x_\eta^*)=\{x_\eta^*+\sigma_\eta(\mu)+\mu: \mu\in X_s\}
%\]
%com $\|\theta_\eta\|_{\infty}\stackrel{\eta \to 0}{\longrightarrow}0$.
\end{lemma}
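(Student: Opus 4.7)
The plan is to realize each local unstable manifold as a graph obtained via the contraction-mapping scheme of Theorem \ref{teoUS}, and deduce the Lipschitz convergence $\theta_\eta\to\theta_0$ from a uniform fixed-point stability argument. The decisive structural feature is that hypothesis \eqref{hyp:N} forces $\theta_0\equiv 0$ on $V_u$ (and $\sigma_0\equiv 0$ on $V_s$), so the claim reduces to a quantitative bound on $\theta_\eta$.

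First I would translate each map to the origin by setting $\tilde T_\eta(x)=T_\eta(x+x^*_\eta)-x^*_\eta$, and, by Remark \ref{rem:UnifConstants} and the uniform-parameter hypothesis, decompose $\tilde T_\eta=L+\tilde N_\eta$ with a common linear part $L$, common splitting $X=X_u\oplus X_s$, common constants $0<b<1<a$ and $\tn{Lip}(\tilde N_\eta)\leqslant\gamma$ for $\eta$ small; extending $\tilde N_\eta$ to all of $X$ via \eqref{eq:ExtensionN}, Theorem \ref{teoUS} yields $\theta_\eta$ as the unique fixed point, in the complete metric space $(\mathcal F,d)$ of Lipschitz maps $X_u\to X_s$ with $\theta(0)=0$ and $\tn{Lip}(\theta)\leqslant 1$, of the contraction $\Phi_\eta$ defined by \eqref{ptofixoU}, with contraction constant $k=(b+2\gamma)/(a-2\gamma)<1$ uniform in $\eta$. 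Setting $W^{\rm u,\eta}_{loc}(x^*_\eta)=x^*_\eta+\{\xi+\theta_\eta(\xi)\colon\xi\in V_u\}$ for a suitably small neighborhood $V_u$ of $0$ in $X_u$ realizes the desired graph representation.

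The hypothesis \eqref{hyp:N}, applied with $x_s=y_s=0$, yields $\|N_{0,s}(\xi)\|\leqslant\|N_{0,s}\|_{W_u\times\overline{B^{X_s}_r(0)},Lip}\|\xi\|\to 0$ as $r\to 0^+$, hence $N_{0,s}(\xi)=0$ for $\xi\in V_u$; the radial extension \eqref{eq:ExtensionN} preserves this vanishing on all of $X_u$. Plugging $\theta\equiv 0$ into \eqref{ptofixoU} gives a valid fixed point, so by uniqueness $\theta_0\equiv 0$; an analogous argument using \eqref{ptofixoS} gives $\sigma_0\equiv 0$. Standard fixed-point stability then yields
\begin{equation*}
d(\theta_\eta,\theta_0)\leqslant\frac{1}{1-k}\,d(\Phi_\eta(\theta_0),\Phi_0(\theta_0))=\frac{1}{1-k}\,d(\Phi_\eta(0),0),
\end{equation*}
and since $\Phi_\eta(0)(\hat\xi)=\tilde N_{\eta,s}(\xi)$ with $\hat\xi=L_u\xi+\tilde N_{\eta,u}(\xi)$, it suffices to show that $\tilde N_{\eta,s}\to\tilde N_{0,s}\equiv 0$ in the norm $\norma{\cdot}_{V_u}$. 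The stable manifold statement is symmetric, using the system \eqref{ptofixoS}.

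The main obstacle is the control of the translation by $x^*_\eta-x^*_0$: since $T_0$ is merely Lipschitz, the naive bound on $\norma{\tilde N_{\eta,s}-\tilde N_{0,s}}_{V_u}$ in terms of $\norma{T_\eta-T_0}_U$ and $\|x^*_\eta-x^*_0\|$ produces a ``double-Lipschitz'' residual that need not tend to zero with $\eta$. Hypothesis \eqref{hyp:N} is tailored to absorb exactly this obstruction: the Lipschitz degeneracy of $N_{0,u}$ and $N_{0,s}$ on thin strips around $X_s$ and $X_u$ is precisely what certifies that, on $V_u$ and $V_s$, these ``bad'' translation residuals can be made uniformly small in $\eta$, closing the argument.
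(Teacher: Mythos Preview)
Your reduction is sound and matches the paper's in spirit: translate to the origin, use the common linear part $L$, observe that \eqref{hyp:N} forces $N_{0,s}\big|_{X_u}\equiv 0$ and hence $\theta_0\equiv 0$, and then estimate $\theta_\eta$. (The paper obtains $\theta_0\equiv 0$ instead by the bi-Lipschitz conjugation of Proposition~\ref{hu3}; your route via \eqref{hyp:N} is equally valid.)

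The gap is in the final step. The metric $d$ on $\mathcal F$ measures only $\sup_{\xi\neq 0}\|\theta(\xi)\|/\|\xi\|$, so your fixed-point stability bound $d(\theta_\eta,0)\leqslant (1-k)^{-1}d(\Phi_\eta(0),0)$ yields at best $\|\theta_\eta\|_{V_u,\infty}\to 0$ on the bounded set $V_u$; it says nothing about $\|\theta_\eta\|_{V_u,Lip}$. A small $d$-distance to zero is perfectly compatible with Lipschitz constant bounded away from zero, so the conclusion $\norma{\theta_\eta}_{V_u}\to 0$ does not follow from your displayed inequality, regardless of how strong the convergence of $\tilde N_{\eta,s}$ is.

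What is missing is exactly the second step the paper carries out: once $r_\eta:=\|\theta_\eta\|_{V_u,\infty}\to 0$, the graph $\{\xi+\theta_\eta(\xi)\}$ sits in the thin strip $V_u\times\overline{B^{X_s}_{r_\eta}(0)}$, and on that strip \eqref{hyp:N} makes the Lipschitz constant of $N_{0,s}$ small. Feeding this into the graph-transform equation \eqref{ptofixoU} directly (not via the $\mathcal F$-metric) gives
\[
\tn{Lip}(\theta_\eta)\ \leqslant\ \frac{K_\eta}{a-b-2\gamma-K_\eta},\qquad K_\eta=\|N_{0,s}\|_{V_u\times\overline{B^{X_s}_{r_\eta}(0)},\,Lip}+\|N_{\eta,s}-N_{0,s}\|_{U,Lip}\to 0,
\]
which is the Lipschitz half of the claim. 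Your last paragraph gestures at the translation obstruction but never produces this bootstrap; without it the argument is incomplete.
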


\begin{remark}
The conditions presented in \eqref{hyp:N} hold, for instance when $T_0$ is a differentiable map in $X$ $($we say $T\in C^1(X))$ and ${\rm dim}X<\infty$ or in the case when $T_0$ is a differentiable in $X$  with uniformly continuous derivative in a neighborhood of the equilibrium point $($for the latter we say that $T\in C^{1+}(X))$.
\end{remark}

\begin{theorem}\label{convvarfinal*}
Let $\{T_\eta\}_{\eta\in [0,1]}$ be a family of maps in $C(X)$. Suppose that
\begin{itemize}
\item[\bf (a)] there exists $p\in \mathbb{N}$ such that each $T_\eta$ has a family $\mathcal{E}_\eta=\{x^\ast_{1,\eta},\ldots,x^{\ast}_{p,\eta}\}$ of $\mathfrak{L}$-hyperbolic equilibria;
\item[\bf (b)] $\|x^\ast_{i,\eta}-x^\ast_{i,0}\|\to 0$ as $\eta\to 0^+$, for $i=1,\ldots,p$;
\item[\bf (c)] there exist neighborhoods $U,V$ of $\ \mathcal{E}_0$ such that $\norma{T_\eta-T_0}_U\to 0$ as $\eta\to 0^+$ and $T_\eta \colon U\to T_\eta(U)$ is bi-Lipschitz for each $\eta\in [0,1]$.
\item[\bf (d)] $T_0\in C^{1+}(X)$.
\end{itemize}

Then for each $n\in \mathbb{N}$ and $i=1,\ldots,p$, the families $\{T^n_\eta W^{\rm u,\eta}_{loc}(x^\ast_{i,\eta})\}_{\eta\in[0,1]}$ and $\{W^{\rm s,\eta}_{loc}(x^\ast_{i,\eta})\}_{\eta\in [0,1]}$ are continuous at $\eta=0$, with the convergence in the norm $\norma{\cdot}_U$.
\end{theorem}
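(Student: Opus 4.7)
The approach is to invoke Lemma~\ref{convvarlipfinal_lem} at each equilibrium to obtain Lipschitz graph representations that converge in the appropriate norm, and then propagate the convergence through the finite iteration $T^n_\eta$ by induction on $n$ using hypothesis (c).

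First, for each $i=1,\ldots,p$, I would check the hypotheses of Lemma~\ref{convvarlipfinal_lem} near $x^\ast_{i,\eta}$. The uniform hyperbolicity parameters come from Remark~\ref{rem:UnifConstants}; the convergences $x^\ast_{i,\eta}\to x^\ast_{i,0}$ and $\norma{T_\eta-T_0}_U\to 0$ are (b) and (c). The key nontrivial hypothesis to verify is the decay condition~\eqref{hyp:N} on $N_0$: writing $L_{i,0}=DT_0(x^\ast_{i,0})$ and $N_{i,0}(x)=T_0(x+x^\ast_{i,0})-x^\ast_{i,0}-L_{i,0}x$, hypothesis (d) gives $DN_{i,0}(0)=0$ together with uniform continuity of $DN_{i,0}$ near $0$, and a mean-value estimate then yields $\|N_{0,u}\|_{\overline{B_r^{X_u}(0)}\times W_s,\mathrm{Lip}}\to 0$ and $\|N_{0,s}\|_{W_u\times\overline{B_r^{X_s}(0)},\mathrm{Lip}}\to 0$ as $r\to 0^+$. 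Applying the Lemma produces neighborhoods $V_u,V_s$ and Lipschitz maps $\theta_{i,\eta}\colon V_u\to X_s$, $\sigma_{i,\eta}\colon V_s\to X_u$ such that
\[
W^{\rm u,\eta}_{loc}(x^\ast_{i,\eta})=\{x^\ast_{i,\eta}+\xi+\theta_{i,\eta}(\xi):\xi\in V_u\},\qquad W^{\rm s,\eta}_{loc}(x^\ast_{i,\eta})=\{x^\ast_{i,\eta}+\sigma_{i,\eta}(\mu)+\mu:\mu\in V_s\},
\]
with $\norma{\theta_{i,\eta}-\theta_{i,0}}_{V_u}\to 0$ and $\norma{\sigma_{i,\eta}-\sigma_{i,0}}_{V_s}\to 0$. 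Together with (b), this immediately gives the continuity at $\eta=0$ of $\{W^{\rm s,\eta}_{loc}(x^\ast_{i,\eta})\}$ and the case $n=0$ of the unstable statement.

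For $n\geq 1$, I introduce the natural parametrizations $F^n_{i,\eta}\colon V_u\to X$ defined by $F^n_{i,\eta}(\xi)=T^n_\eta(x^\ast_{i,\eta}+\xi+\theta_{i,\eta}(\xi))$, so that $T^n_\eta W^{\rm u,\eta}_{loc}(x^\ast_{i,\eta})=F^n_{i,\eta}(V_u)$, and prove $\norma{F^n_{i,\eta}-F^n_{i,0}}_{V_u}\to 0$ by induction on $n$. The inductive step decomposes
\[
F^n_{i,\eta}-F^n_{i,0}=(T_\eta-T_0)\circ F^{n-1}_{i,\eta}+T_0\circ F^{n-1}_{i,\eta}-T_0\circ F^{n-1}_{i,0},
\]
the first term being controlled in $\norma{\cdot}_{V_u}$ by $\norma{T_\eta-T_0}_U$ (together with the uniform Lipschitz bound on $F^{n-1}_{i,\eta}$) and the second by the Lipschitz constant of $T_0|_U$ times $\norma{F^{n-1}_{i,\eta}-F^{n-1}_{i,0}}_{V_u}$. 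Both tend to zero by the induction hypothesis and by (c).

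The main obstacle is the tacit requirement $F^{n-1}_{i,\eta}(V_u)\subset U$, needed so that the Lipschitz and sup-norm control on $T_\eta-T_0$ provided by (c) is available at every iterate. Since $n$ is fixed, this is secured by shrinking $V_u$ and then $\eta$: once $V_u$ is small enough, $F^{n-1}_{i,0}(V_u)$ lies in a small neighborhood of the compact orbit segment $T_0^{n-1}(\{x^\ast_{i,0}\})\subset U$, and the uniform convergence coming from the induction hypothesis forces $F^{n-1}_{i,\eta}(V_u)\subset U$ for all small $\eta$. The only additional care is for the Lipschitz part of $\norma{\cdot}_{V_u}$ in the inductive estimate, which requires the full Lipschitz closeness $\|T_\eta-T_0\|_{U,\mathrm{Lip}}\to 0$ and the uniform bi-Lipschitz property from (c); both are available by hypothesis, so the induction closes and yields the claimed continuity.
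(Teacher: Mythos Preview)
Your proof is correct and follows essentially the same route as the paper: invoke Lemma~\ref{convvarlipfinal_lem} for the local manifolds, then iterate using the bi-Lipschitz property in hypothesis~(c). The paper's own proof is a two-line sketch to this effect; you have simply unpacked it, making explicit how hypothesis~(d) yields condition~\eqref{hyp:N} (which the paper relegates to a remark) and writing out the inductive propagation through $T_\eta^n$ that the paper leaves to the reader.
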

%Sejam $T_\eta\in \mathcal{F}(X)$ e $\mathcal{E}_\eta=\{x_{1,\eta}^*,\hdots,x_{\mathfrak{p},\eta}\}$ seu conjunto de equilíbrios para cada $0\leqslant \eta\leqslant 1$. Suponha que existam $\delta>0$ tal que $\|T_\eta-T_0\|_{\tn{Lip}(\mathcal{O}_\delta(\mathcal{E}_0))}\stackrel{\eta\to 0}{\to 0}$, $T_0:\mathcal{O}_{\rho_0}\to T_0(\mathcal{O}_{\rho_0})$ é bi-lipschitz e $T_0$ satisfaz a propriedade \ref{H} para cada $x^*\in \mathcal{E}_0$. Então existe $\eta_0>0$ tal que $T_\eta$ só tem equilíbrios $\mathfrak{L}$-hiperbólicos para cada $0\leqslant \eta\leqslant \eta_0$, $x_{i,\eta}^*\stackrel{\eta\to 0}{\longrightarrow}x_{i,0}^*$ para cada $1\leqslant i\leqslant \mathfrak{p}$ e existe $\rho\in (0,\rho_0)$ tal que para cada $n\in \N$ e $1\leqslant i\leqslant \mathfrak{p}$ temos
%\[
%T_\eta^nW^{u,\rho}_{\eta}(x^*_{i,\eta})=\vp_\eta(U_0),\:W^{s,\rho}_{\eta}(x^*_{i,\eta})=\psi_{\eta}(V_0),
%\]
%com com $U_0, V_0$ abertos de subespaços fechado de $X$ e $\vp_\eta,\psi_{\eta}$ bi-Lipschitz para cada $\eta\in [0,\eta_0]$ e,
%\[
%\lim_{\eta\to 0} [\|\vp_\eta-\vp_0\|_{\tn{Lip}(U_0)}+\|\psi_{\eta}-\psi_0\|_{\tn{Lip}(V_0)}]=0
%\]
\begin{proof} The result follows from the continuity of the families $\{W^{\rm u,\eta}_{loc}(x^\ast_{i,\eta})\}_{\eta\in[0,1]}$ and  $\{W^{\rm s,\eta}_{loc}(x^\ast_{i,\eta})\}_{\eta\in[0,1]}$, given in Lemma \ref{convvarlipfinal_lem}, and the fact that the maps $T_\eta\colon U\to T_\eta(U)$ are bi-Lipschitz.
\end{proof}

\label{app:LipschitzMani}

One particular question that arises when we are dealing with invariant {\it Lipschitz manifolds}; that is, sets given locally as graphs of Lipschitz maps, is the following: if we make small autonomous perturbations of a Lipschitz manifold, can the perturbed manifold be locally represented by a graph of a Lipschitz map {\it with the same} domain as the limiting manifold? Under suitable conditions of convergence, the answer is {\it yes}, but the proof of this result is not trivial.

Since we will need this kind of result, which is not the main focus of this paper, we added them in Appendix \ref{app:LipschitzMani} for the sake of completeness. All the results used in the appendix we be referenced when used, so the reader can follow the theory without any loss if he/she chooses to skip the Appendix for now.
%Os lemas a seguir ajudam a responder a seguinte pergunta: Quais condições de proximidade Lipschitz são suficientes para que uma curva próxima a
%um gráfico de função Lipschitz seja também um gráfico sobre os mesmos eixos? Esta questão será importante para provarmos a estabilidade de certos
%tipos de conexões entre variedades estáveis e instáveis (que poderíamos até chamar de "conexões transversais", isto fará mais sentido na seção de
%semigrupos Morse-Smale).

\section{Topological stability of discrete dynamically gradient maps} \label{sec:TS}

In this section, we discuss briefly the topological stability for discrete dynamically gradient maps, which is a known result in the literature (the reader may see \cite{aragao,aragao2,CL09} for detailed discussions on this subject), but is an important stepping stone to study the {\it geometrical stability}, which is the main goal of our work.

\begin{definition}
We say a set $\mathcal{A}\subset X$ is the \df{global attractor} for a map $T\in C(X)$ if $\mathcal{A}$ is the global attractor of the discrete semigroup $\{T^n\colon n\in \mathbb{N}\}$.
\end{definition}

\begin{definition}
Let $T\in C(X)$ with a global attractor $\mathcal{A}$. We say that a set $\Xi\subset \mathcal{A}$ is an \df{isolated invariant} for $T$ if $T(\Xi)=\Xi$ and there exists $r>0$ such that $\Xi$ is the maximal invariant set in $\mathcal{O}_r(\Xi)$; that is, if $B\subset \mathcal{O}_r(\Xi)$ satisfies $T(B)=B$ then $B\subset \Xi$.
\end{definition}

From the continuity of $T$ it is clear that $\overline{\Xi}$ is invariant for $T$, and hence the maximality of $\Xi$ in $\mathcal{O}_r(\Xi)$ implies that $\Xi$ is closed, and since $\Xi\subset \mathcal{A}$, $\Xi$ is compact.

\begin{definition}
Let $T\in C(X)$ with a global attractor $\mathcal{A}$. We say that a family $\mathfrak{E}=\{\Xi_1,\ldots,\Xi_p\}$ is a \df{disjoint family of isolated invariants} for $T$ if each $\Xi_i$ is an isolated invariant for $T$ and there exists $r_0>0$ such that $\mathcal{O}_{r_0}(\Xi_i)\cap \mathcal{O}_{r_0}(\Xi_j)=\varnothing$ for $1\leqslant i<j\leqslant p$.
\end{definition}

\begin{definition}
Let $T\in C(X)$ with a global attractor $\mathcal{A}$ and $\mathfrak{E}=\{\Xi_1,\ldots,\Xi_p\}$ a disjoint family of isolated invariants for $T$. A \df{heteroclinic structure} in $\mathfrak{E}$ is a subset $\{\Xi_{k_1},\ldots,\Xi_{k_\ell}\}$ of $\mathfrak{E}$ and bounded global solutions $\xi_i$ of $T$ for $i=1,\ldots,m$ such that
$$
\lim_{m\to -\infty}d(\xi_i(m),\Xi_{k_i}) = 0 \hbox{ and } \lim_{m\to \infty}d(\xi_i(m),\Xi_{k_{\ell+1}})=0, \hbox{ for each } i=1,\ldots,\ell,
$$
 where $\Xi_{k_{\ell+1}}$ is defined as $\Xi_{k_1}$.
\end{definition}

\begin{definition}
Let $T\in C(X)$ with a global attractor $\mathcal{A}$ and $\mathfrak{E}=\{\Xi_1,\ldots,\Xi_p\}$ a disjoint family of isolated invariants for $T$. We say that $T$ is \df{dynamically gradient} with respect to $\mathfrak{E}$ if it satisfies:
\begin{itemize}
\item[\bf (DG1)] given a bounded global solution $\xi$ of $T$, there exist $\Xi_i,\Xi_j\in \mathfrak{E}$ such that
$$
\lim_{m\to -\infty}d(\xi(m),\Xi_i) = 0 \hbox{ and } \lim_{m\to \infty}d(\xi(m),\Xi_j)=0;
$$
\item[\bf (DG2)] there are no heteroclinic structures in $\mathfrak{E}$.
\end{itemize}
\end{definition}

The goal here is to study the stability of this concept under small autonomous perturbations. To this end, we need the following definition:
\begin{definition}
Let $\{T_\eta\}_{\eta\in [0,1]}\subset C(X)$. We say that this family is \df{continuous} at $\eta=0$ if
$$
\max_{n=1,\ldots,N}\sup_{x\in K}\|T^n_\eta(x)-T^n_0(x)\| \to 0 \hbox{ as } \eta\to 0^+,
$$
for each compact subset $K$ of $X$ and $N\in \mathbb{N}$. We say that $\{T_\eta\}_{\eta\in [0,1]}$ is \df{collectively asymptotically compact} if given sequences $\eta_k\to 0^+$, $n_k\to \infty$ and $\{x_k\}$ bounded in $X$ such that $\{T^{n_k}_{\eta_k}(x_k)\}$ is bounded, then $\{T^{n_k}_{\eta_k}(x_k)\}$ has a convergent subsequence.
\end{definition}

With these definitions we are able to present the main result concerning the stability of the {\it dynamically gradient} concept.

\begin{proposition} \label{prop:DynGradient1}
Let $\{T_\eta\}_{\eta\in [0,1]}\in C(X)$ be a collectively asymptotically compact and continuous family of maps at $\eta=0$. Assume that:
\begin{itemize}
\item[\bf (a)] $T_\eta$ has a global attractor $\mathcal{A}_\eta$ for each $\eta\in [0,1]$ and $\cup_{\eta\in [0,1]}\mathcal{A}_\eta$ is precompact in $X$; \smallskip
\item[\bf (b)] there exists $p\in \mathbb{N}$ such that $T_\eta$ has a family of isolated invariants $\mathfrak{E}_\eta=\{\Xi_{1,\eta},\ldots,\Xi_{p,\eta}\}$ for each $\eta\in [0,1]$;\smallskip
\item[\bf (c)] $\displaystyle \max_{i=1,\ldots,p}\Big\{{\rm dist}_H(\Xi_{i,\eta},\Xi_{i,0})+{\rm dist}_H(\Xi_{i,0},\Xi_{i,\eta})\Big\}\to 0$ as $\eta\to 0^+$;\smallskip
\item[\bf (d)] there exists $\eta_0>0$ and neighborhoods $V_i$ of $\Xi_{i,0}$ such that $\Xi_{i,\eta}$ is the maximal invariant set for $T_\eta$ in $V_i$ for each $i=1,\ldots,p$ and $\eta\in [0,\eta_0]$
\item[\bf (e)] $T_0$ is dynamically gradient with respect to $\mathfrak{E}_0$.
\end{itemize}

Then there exists $\eta_1>0$ such that $T_\eta$ is dynamically gradient with respect to $\mathfrak{E}_\eta$ and
$$
\mathcal{A}_\eta = \bigcup_{i=1}^p W^{\rm u}(\Xi_{i,\eta}), \hbox{ for each }\eta\in [0,\eta_1],
$$
where
\begin{align*}
W^{\rm u}(\Xi_{i,\eta}) = \{x\in X\colon  \hbox{ there exists a global solution }& \xi \hbox{ of } T_\eta \hbox{ such that } \xi(0)=x \\ & \hbox{ and } d(\xi(m), \Xi_{i,\eta})\to 0 \hbox{ as } m\to -\infty\}.
\end{align*}
\end{proposition}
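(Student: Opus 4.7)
The plan is to adapt the standard arguments for topological stability of dynamically gradient maps in \cite{aragao,aragao2,CL09}, using the collective asymptotic compactness of $\{T_\eta\}$ together with the continuity at $\eta=0$ as the basic compactness/convergence tool that lets us pass from the perturbed dynamics to the limit dynamics.

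First, I would establish a preparatory compactness lemma: given $\eta_k\to 0^+$ and bounded global solutions $\xi_k$ of $T_{\eta_k}$ with $\cup_k \xi_k(\Z)$ bounded (which follows from hypothesis (a), since each $\xi_k(\Z)\subset\mathcal{A}_{\eta_k}$ and $\cup_{\eta\in[0,1]}\mathcal{A}_\eta$ is precompact), there is a subsequence and a bounded global solution $\xi_0$ of $T_0$ with $\xi_k(m)\to \xi_0(m)$ for every $m\in\Z$. The forward index is handled by a diagonal argument using collective asymptotic compactness applied to $\{T_{\eta_k}^{n}(\xi_k(m-n))\}_k$, and the limit identity $T_0(\xi_0(m))=\xi_0(m+1)$ follows from the continuity of the family at $\eta=0$ applied to compact sets.

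Next, I would prove \textbf{(DG1)} for $T_\eta$ by contradiction. If it fails, there is a sequence $\eta_k\to 0^+$, bounded global solutions $\xi_k$ of $T_{\eta_k}$, an index $\star\in\{+\infty,-\infty\}$ and $\delta_0>0$ such that for every $k$ and every $i=1,\dots,p$ one can find $m_k^{(i)}\to\star$ with $d(\xi_k(m_k^{(i)}),\Xi_{i,\eta_k})\geqslant \delta_0$. Using hypothesis (c) one strengthens this to $d(\xi_k(m_k^{(i)}),\Xi_{i,0})\geqslant \delta_0/2$. Shifting $\xi_k$ appropriately and applying the compactness lemma, I extract a bounded global solution $\zeta_0$ of $T_0$ whose orbit stays outside $\mathcal{O}_{\delta_0/4}(\Xi_{i,0})$ infinitely often in the direction $\star$, for every $i$. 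Combining the dynamically gradient property of $T_0$, the maximality in $V_i$ from hypothesis (d) (which prevents $\zeta_0$ from being trapped in $V_i$ while repeatedly escaping $\mathcal{O}_{\delta_0/4}(\Xi_{i,0})$), and a standard covering argument, I derive a contradiction. For \textbf{(DG2)}, an analogous argument applies: a hypothetical heteroclinic structure for $T_{\eta_k}$ with $\eta_k\to 0^+$ produces, via the compactness lemma and hypothesis (c), a heteroclinic structure for $T_0$ in $\mathfrak{E}_0$, contradicting \textbf{(DG2)} for $T_0$.

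For the attractor decomposition, the inclusion $\bigcup_{i=1}^p W^{\rm u}(\Xi_{i,\eta})\subset \mathcal{A}_\eta$ is immediate because each element of $W^{\rm u}(\Xi_{i,\eta})$ lies on a bounded global solution and thus belongs to $\mathcal{A}_\eta$. Conversely, if $x\in\mathcal{A}_\eta$ there is a bounded global solution $\xi$ of $T_\eta$ through $x$; applying the already proven \textbf{(DG1)} in the backward direction, $d(\xi(m),\Xi_{i,\eta})\to 0$ as $m\to -\infty$ for some $i$, whence $x\in W^{\rm u}(\Xi_{i,\eta})$. The main obstacle I expect is the bookkeeping in the \textbf{(DG1)} contradiction argument: extracting a limit solution that really violates \textbf{(DG1)} for $T_0$ rather than merely wandering in a neighborhood of some $\Xi_{i,0}$; hypothesis (d), which guarantees that $\Xi_{i,\eta}$ is the \emph{maximal} invariant set in a common neighborhood $V_i$, is precisely what is needed to prevent this degeneracy and close the contradiction.
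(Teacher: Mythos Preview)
Your proposal is correct in spirit and follows the standard route of \cite{aragao,aragao2,CL09}, but you should be aware that the paper's own proof consists of a single line: ``Apply \cite[Theorem 2.13]{CL09}.'' In other words, the authors treat this proposition as a direct citation of an existing result rather than reproving it, so there is no detailed argument in the paper to compare against. Your sketch is essentially an outline of how that cited theorem is proved; the compactness lemma you describe is exactly Lemma~\ref{Laux} of the paper, and the contradiction arguments for \textbf{(DG1)} and \textbf{(DG2)} are the expected ones. The only part that would need more care in a full write-up is the \textbf{(DG1)} step: extracting a single limit solution $\zeta_0$ of $T_0$ that genuinely fails to converge to any $\Xi_{i,0}$ requires a more careful shifting/diagonal argument than your sketch indicates (one typically builds a chain of limit solutions rather than a single one), but this is precisely what is carried out in \cite{CL09}.
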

\begin{proof}
Apply \cite[Theorem 2.13]{CL09}.
\end{proof}

We can apply this result to our particular case to obtain the following:

\begin{proposition} \label{prop:TS}
Let $\{T_\eta\}_{\eta\in [0,1]}\in C(X)$ a collectively asymptotically compact and continuous family at $\eta=0$ and assume that:
\begin{itemize}
\item[\bf (a)] $T_\eta$ has a global attractor $\mathcal{A}_\eta$ for each $\eta\in [0,1]$ and $\cup_{\eta\in [0,1]}\mathcal{A}_\eta$ is precompact in $X$; \smallskip
\item[\bf (b)]  there is a neighborhood $U$ of $\cup_{\eta\in [0,1]}\mathcal{A}_\eta$ such that $\norma{T_\eta-T_0}_U\to 0$ as $\eta\to 0^+$;
\item[\bf (c)] there exists $p\in \mathbb{N}$ such that $T_\eta$ has a family of isolated invariants $\mathfrak{E}_\eta=\{x^\ast_{1,\eta},\ldots,x^\ast_{p,\eta}\}$ for each $\eta\in [0,1]$ consisting only of equilibria;\smallskip
\item[\bf (d)] all points in $\mathfrak{E}_0$ are $\mathfrak{L}$-hyperbolic.
\item[\bf (e)] $T_0$ is dynamically gradient with respect to $\mathfrak{E}_0$.
\end{itemize}

Then there exists $\eta_1>0$ such that $T_\eta$ is dynamically gradient with respect to $\mathfrak{E}_\eta$ and
$$
\mathcal{A}_\eta = \bigcup_{i=1}^p W^{\rm u}(x^\ast_{i,\eta}), \hbox{ for each }\eta\in [0,\eta_1],
$$
where
\begin{align*}
W^{\rm u}(x^\ast_{i,\eta}) = \{x\in X\colon  \hbox{ there exists a global solution }& \xi \hbox{ of } T_\eta \hbox{ such that } \xi(0)=x \\ & \hbox{ and } \|\xi(m)- x^\ast_{i,\eta}\|\to 0 \hbox{ as } m\to -\infty\}.
\end{align*}
\end{proposition}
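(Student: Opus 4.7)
The strategy is to reduce to Proposition \ref{prop:DynGradient1} by verifying its five hypotheses in the present setting. Hypotheses (a), (b), and (e) of Proposition \ref{prop:DynGradient1} hold immediately from (a), (c), and (e) of Proposition \ref{prop:TS}, since the isolated invariants here are simply the singleton equilibria in $\mathfrak{E}_\eta$. The real work is to establish (c) (Hausdorff convergence $\Xi_{i,\eta}\to \Xi_{i,0}$) and (d) (existence of uniform isolation neighborhoods $V_i$) of Proposition \ref{prop:DynGradient1}.

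First, I would establish that, after a suitable relabeling of the $x^\ast_{i,\eta}$, one has $x^\ast_{i,\eta}\to x^\ast_{i,0}$ as $\eta\to 0^+$. By hypothesis (b) of the present proposition, $\norma{T_\eta-T_0}_U\to 0$ on a neighborhood $U$ of $\bigcup_\eta\mathcal{A}_\eta\supset \mathfrak{E}_0$. Applying Proposition \ref{teoconv} (equivalently Corollary \ref{convvarlip}) to each $\mathfrak{L}$-hyperbolic equilibrium $x^\ast_{i,0}$ produces, for $\eta$ small, a unique weakly hyperbolic equilibrium $\tilde x_{i,\eta}$ of $T_\eta$ in some ball $B^X_{\delta_0}(x^\ast_{i,0})$ with $\tilde x_{i,\eta}\to x^\ast_{i,0}$. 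Because $\mathfrak{E}_\eta$ is a disjoint family of isolated invariants and the balls $B^X_{\delta_0}(x^\ast_{i,0})$ are pairwise disjoint for $\eta$ small, each $\tilde x_{i,\eta}$ must coincide with some $x^\ast_{j,\eta}\in \mathfrak{E}_\eta$, and reindexing yields the matching $x^\ast_{i,\eta}\to x^\ast_{i,0}$. Since for singletons ${\rm dist}_H(\{a\},\{b\})=\|a-b\|$, condition (c) of Proposition \ref{prop:DynGradient1} is obtained.

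Next I would construct uniform isolation neighborhoods. By Remark \ref{rem:UnifConstants}, once $\eta$ is small enough the weak hyperbolicity parameters $\gamma,a,b,\delta$ for each $x^\ast_{i,\eta}$ can be chosen independently of $\eta$; in particular, shrinking $\eta$ further if needed, each $x^\ast_{i,\eta}$ is $\mathfrak{L}$-hyperbolic with $\eta$-uniform constants. Applying Proposition \ref{hu2} uniformly then yields a single $\delta_i>0$, independent of $\eta$, such that $x^\ast_{i,\eta}$ is the unique global solution of $T_\eta$ in $B^X_{\delta_i}(x^\ast_{i,\eta})$. Using the convergence $x^\ast_{i,\eta}\to x^\ast_{i,0}$, set $V_i:=B^X_{\delta_i/2}(x^\ast_{i,0})$; for $\eta$ small, $V_i\subset B^X_{\delta_i}(x^\ast_{i,\eta})$ and $x^\ast_{i,\eta}\in V_i$, so $\{x^\ast_{i,\eta}\}$ is the maximal invariant set of $T_\eta$ in $V_i$, establishing (d) of Proposition \ref{prop:DynGradient1}.

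With all five hypotheses of Proposition \ref{prop:DynGradient1} verified, that proposition directly provides $\eta_1>0$ such that $T_\eta$ is dynamically gradient with respect to $\mathfrak{E}_\eta$ and $\mathcal{A}_\eta=\bigcup_{i=1}^p W^{\rm u}(x^\ast_{i,\eta})$ for every $\eta\in[0,\eta_1]$. The main obstacle is the uniform isolation radius in the second step: the radius $\delta_i$ produced by Proposition \ref{hu2} for each $x^\ast_{i,\eta}$ depends on the bi-Lipschitz straightening of Proposition \ref{hu3}, whose Lipschitz constants in turn depend on the weak hyperbolicity parameters. The whole argument hinges on Remark \ref{rem:UnifConstants} delivering these parameters uniformly in $\eta$; once that is invoked, everything else is bookkeeping, but it is the delicate technical point of the proof.
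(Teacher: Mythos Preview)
Your proposal is correct and follows essentially the same route as the paper: reduce to Proposition~\ref{prop:DynGradient1} by checking its hypotheses. The paper's proof is a single sentence invoking Corollary~\ref{cor:Continuity} to assert that all hypotheses of Proposition~\ref{prop:DynGradient1} hold; you have unpacked this into the explicit verification of (c) and (d), using exactly the ingredients (Proposition~\ref{teoconv}, Corollary~\ref{convvarlip}, Remark~\ref{rem:UnifConstants}, Proposition~\ref{hu2}) that underlie that corollary. Your identification of the uniform isolation radius as the delicate point is apt—the paper's one-line proof simply suppresses this, but it is indeed where the $\mathfrak{L}$-hyperbolicity assumption (d) and the Lipschitz convergence (b) do their work.
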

\begin{proof} Using Corollary \ref {cor:Continuity} we see that all the hypotheses of Proposition \ref{prop:DynGradient1} are satisfied, and hence the result follows.
\end{proof}

\section{$\mathfrak{L}$-transversality} \label{sec:Trans}

When we are studying the geometrical stability of semigroups in the differentiable case (see \cite{Ang,BP,JG,hmo,Henry1,Raugel} for instance), two concepts are the key to unlock the most crucial results: hyperbolicity and transversality. The hyperbolicity in our case is translated to $\mathfrak{L}$-hyperbolicity, and we already proved that the main properties we obtain for hyperbolic points, we can also obtain for $\mathfrak{L}$-hyperbolic points. It is time now to extend the concept of {\it transversality} to $\mathfrak{L}$-transversality without assume differentiability property. This is our goal in this section, to define the notion of $\mathfrak{L}$-transversality and obtain properties of $\mathfrak{L}$-transversal manifolds, similar to the ones in the transversal case, which are necessary to obtain geometrical stability.

\begin{definition} \label{def:Ltransversal}
Let $X$ be a Banach space, $M,N\subset X$ and $x_0\in M\cap N$. We say that $M$ and $N$ are {\bf $\mathfrak{L}-$transversal at $x_0$} if there exist closed vector subspaces $X_1,X_2\subset X$, with $X=X_1\oplus X_2$, a real number $r>0$ and two Lipschitz continuous functions $\theta\colon B_r^{X_1}(0)\to X_2$ and $\sigma\colon B_r^{X_2}(0)\to X_1$, with $\theta(0)=\sigma(0)=0$, ${\rm Lip}(\theta)<1$, ${\rm Lip}(\sigma)<1$ and
$$
\{x_0+\xi+\theta(\xi)\colon \xi\in B_r^{X_1}(0)\}\subseteq M \quad \hbox{ and }\quad \{x_0+\sigma(\eta)+\eta\colon \eta \in B_r^{X_2}(0)\}\subseteq N.
$$

We denote it by $M\pitchfork_{\mathfrak{L},x_0} N$. If $M$ and $N$ are $\mathfrak{L}$-transversal for every $x_0\in M\cap N$, we say that $M$ and $N$ are $\mathfrak{L}$-transversal and denote it by $M\pitchfork_{\mathfrak{L}}N$.
\end{definition}

Note that if $M\cap N=\varnothing$ then $M$ and $N$ are $\mathfrak{L}$-transversal, by vacuity. Also, note that if $x^\ast$ is an weakly hyperbolic point of a map $T$ and, then $W^{\rm u}_{loc}(x^\ast)$ and $W^{\rm s}_{loc}(x^\ast)$ are $\mathfrak{L}$-transversal at $x^\ast$, and since $x^\ast$ is the only point in their intersection, they are $\mathfrak{L}$-transversal.

\begin{proposition}\label{lem6.3}
Let $X$ be a Banach space and $X_1,X_2$ closed subspaces of $X$ such that $X=X_1\oplus X_2$. Assume that there exist $r>0$ and functions $\theta,\tilde{\theta}\colon B_r^{X_1}(0)\to X_2$, $\sigma,\tilde{\sigma}\colon B_r^{X_2}(0)\to X_1$ with $\theta(0)=\sigma(0)=0$, ${\rm Lip}(\theta)<1$ and ${\rm Lip}(\sigma)<1$. Define the sets:
\begin{align*}
&M=\{y+\theta(y)\colon y\in {B_r^{X_1}(0)}\},\quad \quad\quad\quad N=\{\sigma(x)+x\colon x\in {B_r^{X_2}(0)}\}, \\
&
\tilde{M}=\{z+y+\tilde{\theta}(y)\colon y\in {B_r^{X_1}(0)}\} \hbox{ and } \ \tilde{N}=\{z+\tilde{\sigma}(x)+x\colon x\in {B_r^{X_2}(0)}\},
\end{align*}
where $z\in X$, and suppose also that there exists $0<c<1$ such that ${\rm Lip}(\theta)\leqslant c$, ${\rm Lip}(\sigma) \leqslant c$ and
\begin{equation} \label{eq:LipTodas}
  \|\theta(y)-\tilde{\theta}(y)\| \leqslant (1-c)\tfrac r2 \hbox{ and } \|\sigma(x)-\tilde{\sigma}(x)\|\leqslant (1-c)\tfrac r2 \hbox{ for all } y\in {B_r^{X_1}(0)}, \ x\in {B_r^{X_2}(0)},
\end{equation}
\begin{itemize}
  \item[\bf (a)] If ${\rm dim }X_1<\infty$ or ${\rm Lip}(\tilde{\theta})\cdot{\rm Lip}(\tilde{\sigma})<1$ then $\tilde{M}\cap \tilde{N}\neq \varnothing$.
  \item[\bf (b)] If ${\rm Lip}(\tilde{\theta})<1$ and ${\rm Lip}(\tilde{\sigma})<1$ then there exists a point $y_0$ such that $\tilde{M}\pitchfork_{\mathfrak{L},y_0} \tilde{N}$.
\end{itemize}
\end{proposition}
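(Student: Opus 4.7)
The plan is to reduce both parts to fixed points of $F:=\tilde{\sigma}\circ\tilde{\theta}$: a point lies in $\tilde{M}\cap \tilde{N}$ exactly when it has the form $z+y+x$ with $x=\tilde{\theta}(y)$ and $y=\tilde{\sigma}(x)$, i.e., with $F(y)=y$. So the key preliminary step is to show that $F$ is a well defined self-map of a strictly smaller closed ball $K\subset B_r^{X_1}(0)$. Using $\theta(0)=\sigma(0)=0$, the Lipschitz bounds ${\rm Lip}(\theta),{\rm Lip}(\sigma)\leqslant c$, and the perturbation estimate \eqref{eq:LipTodas}, a direct triangle inequality computation yields
\[
\|\tilde{\theta}(y)\|\leqslant \tfrac{(1+c)r}{2}<r \hbox{ and } \|F(y)\|\leqslant \tfrac{(1+c^2)r}{2}<r, \hbox{ for all } y\in B_r^{X_1}(0).
\]
Setting $K:=\overline{B_{(1+c^2)r/2}^{X_1}(0)}$, this makes $F$ a well defined self-map $F\colon K\to K$ on a complete, convex, bounded set that is moreover compact when $\dim X_1<\infty$.

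For part (a), I split into the two cases of the hypothesis. If ${\rm Lip}(\tilde{\theta})\cdot {\rm Lip}(\tilde{\sigma})<1$, then $F$ is a contraction on the complete metric space $K$ and the Banach contraction principle produces a fixed point $y_*\in K$. If $\dim X_1<\infty$, then $K$ is a compact convex subset of the finite-dimensional space $X_1$ and $F\colon K\to K$ is continuous, so Brouwer's fixed point theorem again yields $y_*\in K$ with $F(y_*)=y_*$. In either situation, $p_0:=z+y_*+\tilde{\theta}(y_*)$ lies in $\tilde{M}\cap \tilde{N}$.

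For part (b), the assumption ${\rm Lip}(\tilde{\theta}),{\rm Lip}(\tilde{\sigma})<1$ forces ${\rm Lip}(\tilde{\theta})\cdot {\rm Lip}(\tilde{\sigma})<1$, so (a) produces an intersection point $p_0=z+y_*+x_*$ with $x_*=\tilde{\theta}(y_*)$ and $y_*=\tilde{\sigma}(x_*)$. I then reparameterize $\tilde{M}$ and $\tilde{N}$ about $p_0$ by the change of variables $\xi:=y-y_*$, $\mu:=x-x_*$, setting
\[
\beta(\xi):=\tilde{\theta}(y_*+\xi)-\tilde{\theta}(y_*) \hbox{ and } \gamma(\mu):=\tilde{\sigma}(x_*+\mu)-\tilde{\sigma}(x_*).
\]
Both functions vanish at $0$ and inherit ${\rm Lip}(\beta)={\rm Lip}(\tilde{\theta})<1$ and ${\rm Lip}(\gamma)={\rm Lip}(\tilde{\sigma})<1$. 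The bounds $\|y_*\|\leqslant (1+c^2)r/2$ and $\|x_*\|\leqslant (1+c)r/2$ from the first step show that $\beta$ and $\gamma$ are defined on the common radius $r':=r(1-c)/2>0$, and direct substitution verifies that the graphs $\{p_0+\xi+\beta(\xi)\colon \xi\in B_{r'}^{X_1}(0)\}$ and $\{p_0+\gamma(\mu)+\mu\colon \mu\in B_{r'}^{X_2}(0)\}$ lie inside $\tilde{M}$ and $\tilde{N}$ respectively. This matches Definition \ref{def:Ltransversal} verbatim with the decomposition $X=X_1\oplus X_2$, so $\tilde{M}\pitchfork_{\mathfrak{L},p_0}\tilde{N}$.

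The main technical obstacle is the self-map estimate in the first step: one must track carefully which bounds come from the originals $\theta,\sigma$ (uniformly controlled by $c<1$) versus the perturbations $\tilde{\theta},\tilde{\sigma}$ (known only to be $\tfrac{(1-c)r}{2}$-close to $\theta,\sigma$ pointwise), and choose $K$ small enough to sit inside $B_r^{X_1}(0)$ while still absorbing all of $F(B_r^{X_1}(0))$. Once that ball is in place, part (a) is a clean application of Banach or Brouwer, and part (b) is a routine translation that places the intersection point at the origin while preserving the Lipschitz graph structure.
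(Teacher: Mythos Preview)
Your proof is correct and follows essentially the same strategy as the paper: reduce the intersection $\tilde M\cap\tilde N$ to a fixed point of $\tilde\sigma\circ\tilde\theta$ on a closed ball inside $B_r^{X_1}(0)$, invoke Brouwer or Banach according to the hypothesis of (a), and for (b) translate $\tilde\theta,\tilde\sigma$ to the fixed point to produce the Lipschitz graphs required by Definition~\ref{def:Ltransversal}. The only differences are cosmetic: the paper takes the simpler ball $\overline{B_{r/2}^{X_1}(0)}$ (showing directly that $\tilde\theta$ and $\tilde\sigma$ each map the half-radius ball to the half-radius ball), whereas you work on the slightly larger $\overline{B_{(1+c^2)r/2}^{X_1}(0)}$ and, in exchange, obtain an explicit radius $r'=(1-c)r/2$ in part (b) where the paper simply asserts the existence of a sufficiently small $r_0$.
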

\begin{proof}
Let $K^1_r=\overline{B_{r/2}^{X_1}(0)}$ and $K^2_r=\overline{B_{r/2}^{X_2}(0)}$. Using \eqref{eq:LipTodas}, for $y\in K^1_r$ we have
$$
\|\tilde{\theta}(y)\|\leqslant \|\tilde{\theta}(y)-\theta(y)\|+\|\theta(y)\| \leqslant (1-c)\tfrac r2+c \tfrac r2  = \tfrac r2,
$$
and hence $\tilde{\theta}(K^1_r)\subset K^2_r$. Analogously, we obtain $\tilde{\sigma}(K^2_r)\subset K^1_r$.

 We see that $\tilde{M}$ and $\tilde{N}$ have non-empty intersection if there exist $y\in K^1_r$ and $x\in K^2_r$ such that $y+\tilde{\theta}(y)=\tilde{\sigma}(x)+x$. The latter is true if there exists $y\in K^1_r$ such that $\tilde{\sigma}(\tilde{\theta}(y))=y$; that is, the map $g\colon K^1_r\to K^1_r$ given by $g(y)=\tilde{\sigma}(\tilde{\theta}(y))$ has a fixed point.

Clearly the map $g$ is well-defined, for if $y\in K^1_r$ then
$$
\|g(y)\| = \|\tilde{\sigma}(\tilde{\theta}(y)) - \sigma(\tilde{\theta}(y))\|+\|\sigma(\tilde{\theta}(y))-\sigma(0)\| \leqslant (1-c)\tfrac r2+c\tfrac r2 = \tfrac r2.
$$

If ${\rm dim}X_1<\infty$ then Brouwer's Fixed Point Theorem implies that $g$ has a fixed point in $K^1_r$ and hence $\tilde{M}\cap \tilde{N}\neq \varnothing$.

Note that
$$
\|g(y_1)-g(y_2)\| = \|\tilde{\sigma}(\tilde{\theta}(y_1))-\tilde{\sigma}(\tilde{\theta}(y_1))\| \leqslant {\rm Lip}(\tilde{\sigma})\cdot {\rm Lip}(\tilde{\theta})\|y_1-y_2\|,
$$
for all $y_1,y_2\in K^1_r$. Therefore $g$ is a contraction and has a unique fixed point $y_1\in K^1_r$ and defining $y_2=\tilde{\theta}(y_1)$ we have $y_0=y_1+y_2\in \tilde{M}\cap \tilde{N}$. Note that, by construction, we also have $y_1=\tilde{\sigma}(y_2)$.

It remains to prove the $\mathfrak{L}$-transversality at $y_0$. To this end, firstly we choose $r_0>0$ such that $\overline{B_{r_0}^{X_1}(y_1)}\subset B_r^{X_1}(0)$ and $\overline{B_{r_0}^{X_2}(y_2)}\subset B_r^{X_2}(0)$. Now we define functions $\theta_\ast \colon B_{r_0}^{X_1}(0)\to X_2$ and $\sigma_\ast \colon B_{r_0}^{X_2}(0)\to X_1$ by $
\theta_\ast(y) = \tilde{\theta}(y+y_1) - y_2 = \tilde{\theta}(y+y_1) - \tilde{\theta}(y_1)$ and $\sigma_\ast(x) = \tilde{\sigma}(x+y_2)-y_1 = \tilde{\sigma}(x+y_2)-\tilde{\sigma}(y_2)$ for all $y\in B_{r_0}^{X_1}(0)$, $x\in  B_{r_0}^{X_2}(0)$. Therefore, the functions $\theta_\ast$ and $\sigma_\ast$ satisfy the conditions of Definition \ref{def:Ltransversal}, and hence $\tilde{M}\pitchfork_{\mathfrak{L},y_0} \tilde{N}$.
%Note que $\tilde{z}+y+\tilde{\theta}(y)=\tilde{z}+\tilde{\sigma}(x)+x$ equivale a $\tilde{\sigma}(\tilde{\theta}(y))=y$ (supondo $\|\tilde{\theta}(y)\|\leqslant r$). Defina $g:\overline{B_r^{X_{u}}(0)}\rt \overline{B_r^{X_u}(0)}$ por, $g(x)=\tilde{\sigma}(\tilde{\theta}(x))$. Mostremos que $g$ está bem definida. De fato, para todo $x\in \overline{B_r^{X_u}(0)}$,
%\[
%\|\tilde{\theta}(x)\|\leqslant \|\tilde{\theta}(x)-\theta(x)\|+\|\theta(x)-\theta(0)\|\leqslant \|\tilde{\theta}(x)-\theta(x)\|+cr\leqslant r
%\]
%Analogamente,
%\[
%\|\tilde{\sigma}(\tilde{\theta}(x))\|\leqslant \|\tilde{\sigma}(\tilde{\theta}(x))-\sigma(\tilde{\theta}(x))\|+\|\sigma(\tilde{\theta}(x))-\sigma(0)\|\leqslant
%\|\tilde{\sigma}(\tilde{\theta}(x))-\sigma(\tilde{\theta}(x))\|+cr\leqslant r
%\]
%para todo $x\in \overline{B_r^{X_u}(0)}$.
%
%Suponha o item $a)$, então $\tn{Dim }X_u<\infty$ e portanto $\overline{B_r^{X_u}(0)}$ é compacta. Logo $g$ tem um ponto fixo e portanto $\tilde{M}$ e $\tilde{N}$ tem intersecção.
%
%Suponha o item $b)$, então $g$ é uma contração pois
%\begin{equation*}
%\|g(x)-g(y)\|\leqslant \tn{Lip}(\tilde{\sigma})\tn{Lip}(\tilde{\theta})\|x-y\|\leqslant c^2\|x-y\|.
%\end{equation*}
%
%Isto mostra que $g$ tem um único ponto fixo e portanto $\tilde{M}$ e $\tilde{N}$ tem intersecção.
%Como $\tn{Lip}(\tilde{\sigma}),\tn{Lip}(\tilde{\theta})<1$, segue que a intersecção é $\mathfrak{L}$-transversal.
\end{proof}

\section{$\mathfrak{L}$-Morse Smale semigroups} \label{sec:LMSS}

In this section we develop the main concepts and results of our work, which involve the study of stability of certain structures, the {\it $\mathfrak{L}$-Morse Smale semigroups}, under small Lipschitz perturbations (in the norm $\norma{\cdot}$). We begin with a simple result, that will help us with the upcoming definitions.

\begin{proposition}
Let $T\in C(X)$ a map with a global attractor $\mathcal{A}$ and an $\mathfrak{L}$-hyperbolic point $x^\ast$ and local unstable manifold $W^{\rm u}_{loc}(x^\ast)$. If there exists neighborhoods $U,V$ of $\mathcal{A}$ in $X$ such that $T\colon U\to V$ is bi-Lipschitz, then the \df{unstable set} defined by
$$
W^{\rm u}(x^{\ast}) = \bigcup_{n\in\mathbb{N}} T^n(W^{\rm u}_{loc}(x^\ast)),
$$
is locally given as graphs of Lipschitz maps; in other words, it is a Lipschitz manifold. Moreover $x\in W^{\rm u}(x^{\ast})$ iff there exists a global bounded solution $\xi$ of $T$ with $\xi(0)=x$ and $\|\xi(m)-x^\ast\|\to 0$ as $m\to -\infty$.
\end{proposition}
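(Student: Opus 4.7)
The plan is to prove the two claims separately, handling first the Lipschitz manifold structure and then the dynamical characterization of $W^{\rm u}(x^\ast)$.

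For the manifold part, the key observation is that $W^{\rm u}_{loc}(x^\ast)\subset \mathcal{A}$. Indeed, by characterization {\bf (e)2} of Theorem \ref{cor:US}, every $y\in W^{\rm u}_{loc}(x^\ast)$ admits a bounded backward sequence $\{y_j\}_{j\leqslant 0}$ with $y_0=y$ and $y_{j+1}=T(y_j)$, which extends (via $T$) to a bounded global solution through $y$; since $\mathcal{A}$ contains all bounded global solutions, $y\in \mathcal{A}$. Invariance of $\mathcal{A}$ then gives $T^n(W^{\rm u}_{loc}(x^\ast))\subset\mathcal{A}\subset U$ for every $n\in\mathbb{N}$, so the bi-Lipschitz hypothesis on $T\colon U\to V$ applies in every iteration. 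Locally, $W^{\rm u}_{loc}(x^\ast)$ is the graph of a Lipschitz map $\theta$ over (an open subset of) $x^\ast+X_u$, by Theorem \ref{cor:US}{\bf (a)}. Applying Proposition \ref{prop6.9} from Appendix \ref{app:LipschitzMani} (which is precisely designed to show that the bi-Lipschitz image of a Lipschitz graph is again locally a Lipschitz graph), we conclude that each $T^n(W^{\rm u}_{loc}(x^\ast))$ is a Lipschitz manifold, and therefore so is their union $W^{\rm u}(x^\ast)$.

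For the ``if'' part of the characterization, suppose $x\in W^{\rm u}(x^\ast)$, so that $x=T^n(y)$ for some $n\in\mathbb{N}$ and some $y\in W^{\rm u}_{loc}(x^\ast)$. By Theorem \ref{cor:US}{\bf (e)2} we can choose a sequence $\{y_j\}_{j\leqslant 0}\subset U$ with $y_0=y$, $y_{j+1}=T(y_j)$ and $\sup_{j\leqslant 0}(1+\epsilon_0)^{-j}\|y_j-x^\ast\|<\infty$, so in particular $y_j\to x^\ast$ as $j\to-\infty$. Define $\xi(m)=T^m(x)$ for $m\geqslant 0$, $\xi(m)=T^{n+m}(y)$ for $-n\leqslant m\leqslant 0$, and $\xi(m)=y_{n+m}$ for $m<-n$. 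One checks directly that $\xi$ is a global solution with $\xi(0)=x$; it takes values in $\mathcal{A}$ (hence is bounded), and $\xi(m)\to x^\ast$ as $m\to-\infty$.

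For the converse, let $\xi$ be a bounded global solution with $\xi(0)=x$ and $\xi(m)\to x^\ast$ as $m\to-\infty$. The main difficulty is that we are only given convergence to $x^\ast$, while characterization {\bf (e)2} of Theorem \ref{cor:US} requires exponential decay $\|x_j-x^\ast\|=O((1+\epsilon_0)^j)$ for the backward sequence. To resolve this, the plan is to pass to the normalized coordinates of Proposition \ref{hu3}: after conjugation by the bi-Lipschitz map $g$, the map becomes $S_1=L+N_1$ with $N_{1,u}$ vanishing on $X_s$ and $N_{1,s}$ vanishing on $X_u$ in $V_u\oplus V_s$. The estimates used in the proof of Proposition \ref{hu2} then give $\|\pi_s S_1(z)\|\leqslant (b+\gamma)\|\pi_s z\|$ and $\|\pi_u S_1(z)\|\geqslant(a-\gamma)\|\pi_u z\|$. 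Choose $m_0<0$ so that $\xi(m)$ lies in a sufficiently small neighborhood of $x^\ast$ (translated through $g^{-1}$ into $V_u\oplus V_s$) for all $m\leqslant m_0$. Iterating the second estimate backward shows $\|\pi_u(g^{-1}\xi(m_0+j)-x^\ast)\|\leqslant (a-\gamma)^j\|\pi_u(g^{-1}\xi(m_0)-x^\ast)\|$ for $j\leqslant 0$, giving the required exponential growth condition for the backward sequence starting at $\xi(m_0)$; iterating the first estimate backward shows $\|\pi_s(g^{-1}\xi(m_0)-x^\ast)\|$ would grow geometrically backward, contradicting boundedness unless it is zero. Thus $\xi(m_0)\in W^{\rm u}_{loc}(x^\ast)$, and since $x=\xi(0)=T^{|m_0|}(\xi(m_0))$, we conclude $x\in W^{\rm u}(x^\ast)$. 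The exponential-rate argument just sketched is the main technical obstacle, and is also what forces us to work in the normalized coordinates provided by Proposition \ref{hu3}.
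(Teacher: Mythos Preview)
The paper's own proof is a single sentence: both claims ``follow directly from the characterization of $W^{\rm u}_{loc}(x^\ast)$ in Corollary~\ref{cor:US} and the bi-Lipschitz property of $T$.'' Your write-up is far more explicit, and the overall architecture (inclusion in $\mathcal{A}$ to keep iterates inside $U$, forward construction of the global solution, then upgrading convergence to membership in $W^{\rm u}_{loc}$) is sound. Two points deserve comment.

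First, your invocation of Lemma~\ref{prop6.9} is misplaced. That result treats parameterizations $\psi_\eta$ that are \emph{close} to a fixed graph parameterization $\psi_0$ (the hypothesis is $\norma{\psi_\eta-\psi_0}_{V_u}\leqslant c(\eta)\to 0$); it is a perturbation statement, not a statement about images under an arbitrary bi-Lipschitz map, and applying $T^n$ is not a small perturbation of the identity. What the paper has in mind is more elementary: since $\xi\mapsto x^\ast+\xi+\theta(\xi)$ is bi-Lipschitz from $V_u$ onto $W^{\rm u}_{loc}(x^\ast)$ and $T\colon U\to V$ is bi-Lipschitz, the composition $\xi\mapsto T^n(x^\ast+\xi+\theta(\xi))$ is itself a bi-Lipschitz chart for $T^n(W^{\rm u}_{loc}(x^\ast))$, which is the intended sense of ``Lipschitz manifold'' here.

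Second, your treatment of the converse via the normalized coordinates of Proposition~\ref{hu3} is correct but heavier than needed. The characterization {\bf (e)2} in Theorem~\ref{cor:US} (inherited from Theorems~\ref{teoUS} and~\ref{teo:LIM}) is stated with rate $(1+\epsilon_0)$, but the underlying set $W^{\rm u}_{loc}$ does not depend on the choice of $\rho$ in the gap $(b+2\gamma,a-2\gamma)$. Taking $\rho=1$, the condition $\sup_{j\leqslant 0}\rho^{-j}\|x_j-x^\ast\|<\infty$ becomes just boundedness of the backward orbit, which is automatic once it stays in the bounded neighborhood $U$. Since $\xi(m)\to x^\ast$, there is $m_0<0$ with $\xi(m)\in U$ for all $m\leqslant m_0$, hence $\xi(m_0)\in W^{\rm u}_{loc}(x^\ast)$ directly and $x=T^{|m_0|}\xi(m_0)\in W^{\rm u}(x^\ast)$. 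This is presumably what the paper means by ``follows directly from the characterization''; your coordinate argument re-derives the same exponential dichotomy by hand.
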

\begin{proof}
If follows directly from the characterization of $W^{\rm u}_{loc}(x^\ast)$ in Corollary \ref{cor:US} and the bi-Lipschitz property of $T$.
\end{proof}

Now we can define the main concept of our paper.

\begin{definition}\label{WLSM}
Let $T\in \mathcal{C}(X)$ a map with a global attractor $\mathcal{A}$. We say that $T$ is \df{$\mathfrak{L}$-Morse-Smale}\index{l@$\mathfrak{L}$-Morse-Smale} $($or \df{$\mathfrak{L}$-MS}, for short$)$ if:
\begin{itemize}
  \item[\bf (i)] $T$ is dynamically gradient with respect to a finite family $\mathcal{E}=\{x^\ast_1,\ldots,x^{\ast}_p\}$ of $\mathfrak{L}$-hyperbolic points;
  \item[\bf (ii)] there exist neighborhoods $U$ of $\mathcal{E}$ in $X$ such that $T\colon U\to T(U)$ is bi-Lipschitz;
% \item[\bf (iii)] ${\rm dim}W^{\rm u}_{loc}(x^{\ast}_i)<\infty$ for each $i=1,\ldots,p$;
  \item[\bf (iii)] if $W^{\rm u}(x^\ast_{i})\cap W^{\rm s}_{loc}(x^*_{j})\neq \varnothing$ then there exists $n\in \N$ and $x_0\in X$ such that $T^nW_{loc}^{\rm u}(x^\ast_{i})\pitchfork_{\mathfrak{L},x_0} W^{\rm s}_{loc}(x^*_{j})$;
  \item[\bf (iv)] If $W^{\rm u}(x^*_{i})\cap W^{s}_{loc}(x^*_{j})\neq \varnothing$ and $W^{\rm u}(x^*_{j})\cap W^{\rm s}_{loc}(x^*_{k})\neq \varnothing$, then
  $W^{\rm u}(x^*_{i})\cap W^{\rm s}_{loc}(x^*_{k})\neq \varnothing$.
\end{itemize}
\end{definition}

%\begin{definition}\label{WLSM}
%Let $T\in \mathcal{C}(X)$ with global attractor $\mathcal{A}$. We say that $T$ is Lipschitz Morse Smale (LMS) if:
%\begin{itemize}
%  \item[(i)] $T$ is gradient-like with respect to the set of equilibriums $\mathcal{E}=\{x_1^*,\hdots,x_n^*\}$ which are all $\mathfrak{L}$-hiperbolics {\color{red}and there exist $\rho>0$ such that $T|_{\mathcal{O}_{\rho}(\mathcal{E})}:\mathcal{O}_{\rho}(\mathcal{E})\to T(\mathcal{O}_{\rho}(\mathcal{E}))$ is bi-lipschitz;}
%  \item[(ii)] If $W^{u}(x^*_{i})\cap W^{s}(x^*_{j})\neq \emptyset$, then for each $\rho>0$, there exist $n\in \N$, such that $T^n W^{u,\rho}(x^*_{i})$ and $W^{s,\rho}(x^*_{j})$
%  has $\mathfrak{L}$-transversal intersection;
%  \item[(iii)] If $W^{u}(x^*_{i})\cap W^{s}(x^*_{j})\neq \emptyset$ and $W^{u}(x^*_{j})\cap W^{s}(x^*_{k})\neq \emptyset$, then
%  $W^{u}(x^*_{i})\cap W^{s}(x^*_{k})\neq \emptyset$.
%\end{itemize}
%\end{definition}

\begin{remark}
Clearly if $T$ is a classical Morse-Smale map with only hyperbolic points as critical elements, it is a $\mathfrak{L}$-Morse Smale map. Condition {\rm (iv)} in this case is a simple application of the well known $\lambda$-lemma, which can be found in the manuscript of D. Henry $($the infinite dimensional case$)$ or in \cite{Palis} $($the finite dimensional case$)$.
%{\color{red} Note that if $x^*$ is a $\mathfrak{L}$-hiperbolic with $L$ isomorfism and $\tn{Lip}_{B_\delta^X(x^*)}$ small enough, then $T:B\to T(B)$ is bi-Lipschitz with $B=B_\delta^X(x^*)$ and $T(B)$ open set in $X$.}
\end{remark}

Note that if $T\in C(X)$ has a global attractor $\mathcal{A}$ and two $\mathfrak{L}$-hyperbolic points $x^\ast_1,x^\ast_2$ then $W^{\rm u}(x^*_{1})\cap W_{loc}^{\rm s}(x^*_{2})\neq \varnothing$ iff there exists a bounded global solution $\xi$ in $\mathcal{A}$ such that $\xi(t)\to x^\ast_1$ as $t\to -\infty$ and $\xi(t)\to x^\ast_2$ as $t\to \infty$; in other words, there exists a \df{connection} between $x^\ast_1$ and $x^\ast_2$.

\begin{definition}
Let $T_1,T_2\in \mathcal{C}(X)$ maps with global attractors $\mathcal{A}_1$ and $\mathcal{A}_2$, respectively. We say that $\mathcal{A}_1$ and $\mathcal{A}_2$ are \df{geometrically equivalent} if:
\begin{itemize}
  \item[\bf(i)] $T_i$ is a dynamically gradient semigroup with a family $\mathfrak{E}_i=\{x^{\ast,i}_1,\ldots,x^{\ast,i}_n\}$ of $\mathfrak{L}$-hyperbolic points for $i=1,2$.
  \item[\bf(ii)] there exists a bijection $\mathcal{B}\colon \mathfrak{E}_1\to \mathfrak{E}_2$ such that
  \begin{equation}\label{eq17}
W^{\rm u}(x^{*,1}_{i})\cap W_{loc}^{\rm s}(x^{*,1}_{j})\neq \varnothing \hbox{ iff }
W^{\rm u}(\mathcal{B}(x^{*,1}_{i}))\cap W^{\rm s}_{loc}(\mathcal{B}(x^{*,1}_{j}))\neq \varnothing \hbox{ for each } i,j=1,\ldots,n.
\end{equation}
\end{itemize}
\end{definition}

With the definition previous to this one, item (ii) can be rewritten by saying: \emph{there exists a connection between $x^{\ast,1}_i$ and $x^{\ast,2}_j$ if and only if there exists a connection between $\mathcal{B}(x^{*,1}_{i})$ and $\mathcal{B}(x^{*,2}_{j})$}. Also, we can reorder $\mathfrak{E}_2,$ if necessary, to assume that $\mathcal{B}(x^{*,1}_{i})=x^{\ast,2}_i$ for each $i=1,\ldots,n$.

An important result that will aid us in dealing with perturbations of $\mathfrak{L}$-{\rm MS} maps is the following:

\begin{lemma}\label{Laux}
Let $\{T_\eta\}_{\eta \in [0,1]}\subset C(X)$ a collectively asymptotically compact and continuous family at $\eta=0$. Let $\eta_k\to 0^+$, $a_k,b_k\to \infty$ in $\mathbb{N}$ and, for each $k\in \N$, $\xi_k\colon [-a_k,b_k]\to X$ be a solution of $T_{\eta_k}$. Assume that
$$
\bigcup_{k\in \N}\xi_{k}(t) \hbox{ is precompact in $X$ for each }t\in \mathbb{R}\quad \hbox{ and }\quad  \Xi=\bigcup_{k\in \mathbb{N}}\xi_k([-a_k,b_k]) \hbox{ is bounded in $X$}.
$$

Then there exist a subsequence $\{k_m\}$ of $\mathbb{N}$ and a global solution $\xi_0\colon \mathbb{R}\to X$ of $T_0$ such that $\xi_{k_m}(t) \to \xi_0(t)$ as $m\to \infty$, uniformly for $t$ in bounded subsets of $\mathbb{R}$, and $\xi_0(\mathbb{R})\subset \overline{\Xi}$.
\end{lemma}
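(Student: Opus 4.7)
The plan is to combine a Cantor diagonal extraction with the continuity of the family at $\eta=0$ in order to pass to the limit in the recurrence $\xi_k(m+1)=T_{\eta_k}(\xi_k(m))$. Since we are working with a discrete semigroup, each $\xi_k$ is really a map on the integer interval $[-a_k,b_k]\cap\mathbb{Z}$, and the statement ``$\xi_0\colon \mathbb{R}\to X$'' is to be interpreted as $\xi_0\colon \mathbb{Z}\to X$, with bounded subsets of $\mathbb{R}$ intersecting $\mathbb{Z}$ in finite sets.

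First, I would note that for each fixed $m\in \mathbb{Z}$, since $a_k,b_k\to\infty$, there exists $k_0(m)$ such that $\xi_k(m)$ is defined for every $k\geqslant k_0(m)$. The hypothesis that $\bigcup_{k\in\mathbb{N}}\xi_k(m)$ is precompact in $X$ then allows extraction of a subsequence along which $\xi_k(m)$ converges. Enumerating $\mathbb{Z}$ as $\{m_0,m_1,m_2,\ldots\}$ and applying the standard Cantor diagonal procedure, I obtain a single subsequence $\{k_j\}$ such that $\xi_{k_j}(m)\to \xi_0(m)\in X$ for every $m\in\mathbb{Z}$. Because $\xi_{k_j}(m)\in \Xi$ for all $j$ large enough, the limit lies in $\overline{\Xi}$, giving $\xi_0(\mathbb{Z})\subset \overline{\Xi}$.

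Next, I would verify that $\xi_0$ is a global solution of $T_0$. Fix $m\in\mathbb{Z}$; for all $j$ large enough, $\xi_{k_j}(m+1)=T_{\eta_{k_j}}(\xi_{k_j}(m))$. The left-hand side tends to $\xi_0(m+1)$, so it suffices to show that $T_{\eta_{k_j}}(\xi_{k_j}(m))\to T_0(\xi_0(m))$. Split the difference as
\[
\|T_{\eta_{k_j}}(\xi_{k_j}(m))-T_0(\xi_0(m))\|\leqslant \|T_{\eta_{k_j}}(\xi_{k_j}(m))-T_0(\xi_{k_j}(m))\|+\|T_0(\xi_{k_j}(m))-T_0(\xi_0(m))\|.
\]
The second term vanishes by continuity of $T_0$. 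The first term vanishes because $K=\{\xi_{k_j}(m):j\in\mathbb{N}\}\cup\{\xi_0(m)\}$ is a compact subset of $X$ (a convergent sequence together with its limit), and the continuity of the family at $\eta=0$, applied with $N=1$, yields $\sup_{x\in K}\|T_{\eta_{k_j}}(x)-T_0(x)\|\to 0$. Hence $\xi_0(m+1)=T_0(\xi_0(m))$ for every $m\in\mathbb{Z}$, proving that $\xi_0$ is a global solution of $T_0$. The uniform convergence on bounded subsets of $\mathbb{R}$ reduces, in the discrete setting, to uniform convergence on finite subsets of $\mathbb{Z}$, which is automatic from pointwise convergence.

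The main technical point I expect to require care is the bookkeeping of the diagonal extraction: because the index set $\mathbb{Z}$ is two-sided and $\xi_k(m)$ is only defined once $k$ is large enough, one has to make the extraction along a well-ordering of $\mathbb{Z}$ and simultaneously keep track that $a_{k_j},b_{k_j}\to\infty$ (which is inherited trivially from the parent sequence). Neither the collective asymptotic compactness nor the boundedness of $\Xi$ is strictly used in the extraction itself, since the pointwise precompactness is assumed, but they do ensure that the resulting $\xi_0$ has range inside $\overline{\Xi}$, which is what makes the conclusion useful in later sections.
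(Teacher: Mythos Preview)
Your argument is correct and self-contained. The paper does not actually prove this lemma but simply cites an external reference (Lemma 3.4 in \cite{CLRLibro}), so your diagonal extraction combined with the continuity of the family at $\eta=0$ is exactly the standard route and supplies what the paper omits; your observation that collective asymptotic compactness is not strictly needed here, because pointwise precompactness is assumed directly, is also accurate.
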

\begin{proof} See \cite[Lemma 3.4]{CLRLibro}.
\end{proof}

%\begin{remark}\label{equilibrium_attraction}
%Let $T\in C(X)$ and $T(x^*)=x^*$. Given $\e>0$ and $n_0\in \N$, there exist $\delta>0$ such that ${\color{red}\gamma_{[0,n_0]}^{+}(B_\delta^X(x^*))}\subset B_\e^X(x^*)$. Otherwise, we have $\{x_k\}_{k\in\N}\subset X$ with $x_k\stackrel{k\to \infty}{\longrightarrow}x^*$ and $\{n_k\}_{k\in \N}\subset \{0,\hdots,n_0\}$ such that $T^{n_k}x_k\notin B_{\e}^X(x^*)$ and, without loss of generality, we have $n\in \{0,\hdots,n_0\}$ such that $T^{n}x_k\notin B_{\e}^X(x^*)$. But $x_k\stackrel{k\to \infty}{\longrightarrow}x^*$ and $T\in \mathcal{C}(X)$ implies $T^n(x_k)\stackrel{k\to \infty}{\longrightarrow}T(x^*)=x^*$ which is a contradiction.
%\end{remark}

\begin{proposition}\label{st4}
Let $\{T_\eta\}_{\eta \in [0,1]}\subset {C}(X)$ a collectively asymptotically compact and continuous family at $\eta=0$. Assume that
\begin{itemize}
\item[\bf (a)] $T_\eta$ has a global attractor $\mathcal{A}_\eta$ for each $\eta\in [0,1]$ and $\cup_{\eta\in [0,1]} \mathcal{A}_\eta$ is precompact in $X$;

\item[\bf (b)] there exists $p\in \mathbb{N}$ such that $T_\eta$ has a family of isolated invariants $\mathfrak{E}_\eta=\{x^\ast_{1,\eta},\ldots,x^\ast_{p,\eta}\}$ for each $\eta\in [0,1]$ consisting only of $\mathfrak{L}$-hyperbolic points and
$$
\max_{i=1,\ldots,p}\|x^{\ast}_{i,\eta}-x^\ast_{i,0}\| \to 0 \hbox{ as } \eta\to 0^+.
$$

\item[\bf (c)] $T_0$ is dynamically gradient with respect to $\mathfrak{E}_0$ and satisfies item (iv) of Definition \ref{WLSM}.
\end{itemize}

Then there exists $\eta_0>0$ such that if $W^{\rm u,\eta}(x^\ast_{i,\eta})\cap W^{\rm s,\eta}_{loc}(x^\ast_{j,\eta})\neq \varnothing$ for some $\eta\in [0,\eta_0]$ we have $W^{\rm u,0}(x^*_{i,0})\cap W^{\rm s,0}_{loc}(x^*_{j,0})\neq\varnothing$.
\end{proposition}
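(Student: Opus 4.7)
The plan is a proof by contradiction. The idea is to extract, via Lemma \ref{Laux}, a chain of limiting global solutions of $T_0$ whose links connect consecutive equilibria in $\mathfrak{E}_0$, and then collapse this chain using item (iv) of Definition \ref{WLSM} into a single connection, contradicting the failure of the conclusion.

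Assume the claim fails. Then there exist sequences $\eta_k\to 0^+$ and indices $(i_k,j_k)\in\{1,\ldots,p\}^2$ with $W^{\rm u,\eta_k}(x^\ast_{i_k,\eta_k})\cap W^{\rm s,\eta_k}_{loc}(x^\ast_{j_k,\eta_k})\neq\varnothing$ while $W^{\rm u,0}(x^\ast_{i_k,0})\cap W^{\rm s,0}_{loc}(x^\ast_{j_k,0})=\varnothing$. Since only finitely many pairs are available, a subsequence lets us fix $(i_k,j_k)=(i,j)$. For each $k$, the nonempty intersection supplies a bounded global solution $\xi_k\colon\mathbb{Z}\to\mathcal{A}_{\eta_k}$ of $T_{\eta_k}$ with $\xi_k(m)\to x^\ast_{i,\eta_k}$ as $m\to -\infty$ and $\xi_k(m)\to x^\ast_{j,\eta_k}$ as $m\to\infty$. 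Choose $\delta>0$ so that the balls $B^X_{2\delta}(x^\ast_{l,0})$, $l=1,\ldots,p$, are pairwise disjoint, each is an isolating neighborhood in the sense of Theorem \ref{hu1}, and $x^\ast_{l,\eta_k}\in B^X_{\delta/2}(x^\ast_{l,0})$ for all large $k$.

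Using Remark \ref{rem:UnifConstants} to make the $\mathfrak{L}$-hyperbolicity parameters uniform in $\eta$, Theorem \ref{hu1*} applies uniformly: any orbit of $T_{\eta_k}$ remaining in $B^X_{2\delta}(x^\ast_{l,0})$ for all forward, respectively backward, iterates must converge to $x^\ast_{l,\eta_k}$. Combined with Proposition \ref{prop:TS} (whose topological-stability argument transfers the dynamically gradient character from $T_0$ to $T_{\eta_k}$ for large $k$, ruling out heteroclinic cycles in the perturbations), this forces $\xi_k$ to traverse a finite, ordered, repetition-free sequence of equilibrium neighborhoods $B^X_\delta(x^\ast_{l_k(0),0}),\ldots,B^X_\delta(x^\ast_{l_k(N_k),0})$ with $l_k(0)=i$, $l_k(N_k)=j$, and $N_k\leqslant p-1$. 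A further subsequence fixes the visitation sequence $(l(0),l(1),\ldots,l(N))$. For each $s\in\{0,\ldots,N-1\}$ choose a transition time $t^s_k$ so that $\xi_k(m)\in B^X_{2\delta}(x^\ast_{l(s),0})$ for all $m\leqslant t^s_k$ in the corresponding visit and $\xi_k$ enters $B^X_{2\delta}(x^\ast_{l(s+1),0})$ within a bounded time after $t^s_k$; set $\tilde\xi^s_k(m):=\xi_k(m+t^s_k)$. Since $\cup_{\eta\in[0,1]}\mathcal{A}_\eta$ is precompact, Lemma \ref{Laux} produces along a subsequence a bounded global solution $\tilde\xi^s_0$ of $T_0$ with $\tilde\xi^s_k\to\tilde\xi^s_0$ uniformly on bounded intervals. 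Passing to the limit in the containment estimates and using the dynamically gradient character of $T_0$ together with Theorem \ref{hu1}, each $\tilde\xi^s_0$ must be a connection from $x^\ast_{l(s),0}$ to $x^\ast_{l(s+1),0}$, witnessing $W^{\rm u,0}(x^\ast_{l(s),0})\cap W^{\rm s,0}_{loc}(x^\ast_{l(s+1),0})\neq\varnothing$. Iterating item (iv) of Definition \ref{WLSM} along $x^\ast_{i,0}\to x^\ast_{l(1),0}\to\cdots\to x^\ast_{j,0}$ yields $W^{\rm u,0}(x^\ast_{i,0})\cap W^{\rm s,0}_{loc}(x^\ast_{j,0})\neq\varnothing$, a contradiction.

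The principal obstacle is the correct selection of the transition times $t^s_k$: one must ensure that the limit $\tilde\xi^s_0$ has $\alpha$-limit exactly $x^\ast_{l(s),0}$ and $\omega$-limit exactly $x^\ast_{l(s+1),0}$, rather than being trapped in a previous equilibrium, overshooting to a later one, or oscillating inside an intermediate neighborhood. This rests on the uniform-in-$\eta$ attracting and isolating estimates near $\mathfrak{L}$-hyperbolic points (Theorems \ref{hu1} and \ref{hu1*}, made uniform via Remark \ref{rem:UnifConstants}) together with the transfer of the dynamically gradient property to the perturbed maps through Proposition \ref{prop:TS}, which controls the time each $\xi_k$ can spend outside the union of equilibrium neighborhoods.
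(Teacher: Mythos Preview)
Your argument follows the same route as the paper's: argue by contradiction, use Lemma~\ref{Laux} to extract a chain of $T_0$-connections linking $x^\ast_{i,0}$ to $x^\ast_{j,0}$, then collapse the chain via item (iv) of Definition~\ref{WLSM}. The paper's proof is terse and simply asserts that Lemma~\ref{Laux} yields such a chain; you flesh out the transition-time mechanism in detail.

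One caution: your appeal to Proposition~\ref{prop:TS} to make each $T_{\eta_k}$ dynamically gradient requires the Lipschitz convergence $\norma{T_\eta-T_0}_U\to 0$, which is \emph{not} among the hypotheses of Proposition~\ref{st4} (only continuity of the family at $\eta=0$ is assumed). Likewise, Remark~\ref{rem:UnifConstants} is stated in the context of Lipschitz-close perturbations. This step is in fact unnecessary: the chain-extraction can be run using only the dynamically gradient structure of $T_0$ (as the paper implicitly does), by iteratively choosing, at each stage, the last exit time of $\xi_k$ from the current equilibrium neighborhood before it permanently enters $B_\delta(x^\ast_{j,\eta_k})$, shifting, and passing to the limit via Lemma~\ref{Laux}; the limit solution then backward-converges to the current equilibrium by Theorem~\ref{hu1*} applied to $T_0$ alone, and forward-converges to some new equilibrium by (DG1) for $T_0$. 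The no-cycle condition (DG2) on $T_0$ forces the chain to terminate at $x^\ast_{j,0}$ in at most $p$ steps, without ever needing the perturbed maps to be dynamically gradient.
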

\begin{proof}
If the conclusion is false, there would be a sequence $\eta_k\to 0^+$ and points $x^{\ast}_{i,\eta_k},x^{\ast}_{j,\eta_k}\in \mathfrak{E}_{\eta_k}$ with $W^{\rm u,\eta_k}(x^*_{i,\eta_k})\cap W^{\rm s,\eta_k}_{loc}(x^*_{j,\eta_k})\neq \varnothing$ and $W^{\rm u,0}(x^*_{i,0})\cap W^{\rm s,0}_{loc}(x^*_{j,0})=\varnothing$. Hence for each $k\in \mathbb{N}$ there exists a global solution $\xi_k\colon \mathbb{R}\to X$ with
$$
\lim_{t\to -\infty}\xi_k(t) = x^{\ast}_{i,\eta_k} \hbox{ and } \lim_{t\to \infty}\xi_k(t) = x^{\ast}_{j,\eta_k}.
$$

Using Lemma \ref{Laux} we can extract a finite sequence $e_1,\ldots,e_\ell$ of points in $\mathfrak{E}_0$ with $e_1=x^{\ast}_{i,0}$ and $e_\ell=x^{\ast}_{j,0}$ and construct global solutions $\xi_{0,m}\colon \mathbb{R}\to X$ of $T_0$ such that
$$
\lim_{t\to -\infty}\xi_{0,m}(t) = e_m \quad  \hbox{ and } \quad \lim_{t\to \infty}\xi_{0,m}(t) = e_\ell\quad \hbox{ for each }m=1,\ldots,\ell-1,
$$

This implies that $W^{\rm u,0}(e_m)\cap W^{\rm s,0}_{loc}(e_{m+1})\neq \varnothing$ for each $m=1,\ldots,l$ and using item (iv) of Definition \ref{WLSM} iteratively we obtain $W^{\rm u,0}(e_1)\cap W^{\rm s,0}_{loc}(e_\ell)\neq \varnothing$, which gives us a contradiction and completes the proof.
\end{proof}

%\begin{corollary}\label{stability_fase_diagram}
%Under the conditions of Proposition \ref{st4}, if $T_0$ is $\mathfrak{L}$-Morse Smale then there exists $\eta_0>0$ such that if $W^{\rm u,0}(x^*_{i,0})\cap W^{\rm s,0}_{loc}(x^*_{j,0})\neq \varnothing$ then $W^{\rm u,\eta}(x^*_{i,\eta})\cap W^{\rm s,\eta}_{loc}(x^*_{j,\eta})\neq \varnothing$ for all $\eta \in [0,\eta_0]$.
%\end{corollary}

Until now, we have proved that if there is a sequence of connections between given equilibrium points in the perturbed problems then there will be a connection in the limit problem between the limit equilibrium points. Roughly speaking, it means that connections cannot vanish in the limit. But we also need the converse statement; that is, connections are maintained. If the limit problem has a connection, then the perturbed problems will also present one. To do this, we need the following two technical lemmas. The proof of the first one can be found in Section \ref{app:B1} of Appendix \ref{app:LipschitzMani} \footnote[1]{This proof requires other results presented and proved in Appendix \ref{app:LipschitzMani}.} and the second is analogous.

\begin{lemma}\label{prop6.9}
Let $X_u,X_s$ be closed subspaces of $X$ with $X=X_u\oplus X_s$, $\pi_u$ the canonical projection of $X$ into $X_u$. Let $V_u\subset X_u$ a neighborhood of $0$ in $X_u$ and maps $\psi_\eta\colon V_u\to X$ with $\norma{\psi_\eta-\psi_0}_{V_u}\leqslant c(\eta)$ for all $\eta \in [0,1]$, where $c(\eta)\to 0$ as $\eta\to 0^+$.

Also, assume that $\psi_0(V_u) = \{\xi+\theta_0(\xi)\colon \xi\in V_u\}$. Then, there exist $\eta_1>0$, a neighborhood $W_u$ of $0$ in $X_u$ and maps $\theta_\eta\colon W_u\to X$ for $\eta\in [0,\eta_1]$ such that $\psi_\eta(W_u)=\{\xi+\theta_\eta(\xi)\colon \xi \in W_u\}$
%\{\psi_\eta(0)+\xi+\theta_\eta(\xi)\colon \xi \in W_u\}$
and
$$
{\rm Lip}(\theta_\eta)\leqslant \frac{{\rm Lip}(\theta_0)+c(\eta)}{1-c(\eta)} \hbox{ and } \|\theta_\eta-\theta_0\|_{V_u\cap W_u,\infty}\leqslant (1+{\rm Lip}(\theta_0))c(\eta).
$$
\end{lemma}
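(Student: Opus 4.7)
\textbf{Proof outline for Lemma \ref{prop6.9}.} The plan is an inverse-function-theorem argument in the Lipschitz category: show that $\pi_u\circ \psi_\eta$ is a small Lipschitz perturbation of the identity, invert it on a fixed neighborhood of $0$ in $X_u$, then read off $\theta_\eta$ from the $X_s$-component of $\psi_\eta$ composed with this inverse. Interpreting $\psi_0$ as the graph parametrization $\psi_0(\xi)=\xi+\theta_0(\xi)$, one has $\pi_u\circ\psi_0=\mathrm{id}_{V_u}$. Setting $g_\eta=\pi_u\circ(\psi_\eta-\psi_0)$ and $\Phi_\eta=\pi_u\circ\psi_\eta=I+g_\eta$, the hypothesis $\norma{\psi_\eta-\psi_0}_{V_u}\leqslant c(\eta)$ (together with $\|\pi_u\|\leqslant 1$ in the adapted norm) yields $\|g_\eta\|_{V_u,\infty}\leqslant c(\eta)$ and ${\rm Lip}(g_\eta)\leqslant c(\eta)$.

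Fix balls $\overline{B^{X_u}_R(0)}\subset V_u$ and $W_u=B^{X_u}_r(0)$ with $0<r<R$, and choose $\eta_1>0$ so that $c(\eta)\leqslant \min\{\tfrac12,\tfrac{R-r}{R+1}\}$ for $\eta\in[0,\eta_1]$. For each $\zeta\in W_u$ and $\eta\leqslant\eta_1$, the map $\xi\mapsto \zeta-g_\eta(\xi)$ sends $\overline{B^{X_u}_R(0)}$ into itself (since $\|\zeta-g_\eta(\xi)\|\leqslant r+c(\eta)(R+1)\leqslant R$) and is a contraction of constant $c(\eta)$. The Banach fixed point theorem produces a Lipschitz inverse $\Phi_\eta^{-1}\colon W_u\to \overline{B^{X_u}_R(0)}$ with ${\rm Lip}(\Phi_\eta^{-1})\leqslant 1/(1-c(\eta))$. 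Define $\theta_\eta\colon W_u\to X$ by $\theta_\eta(\zeta)=(I-\pi_u)\psi_\eta(\Phi_\eta^{-1}(\zeta))$, so by construction $\psi_\eta(\Phi_\eta^{-1}(\zeta))=\zeta+\theta_\eta(\zeta)$, which gives the graph identity claimed in the statement (modulo the implicit re-parametrization through $\Phi_\eta^{-1}$).

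For the Lipschitz bound, note that $(I-\pi_u)\psi_\eta=\theta_0+(I-\pi_u)(\psi_\eta-\psi_0)$ has Lipschitz constant at most ${\rm Lip}(\theta_0)+c(\eta)$; composing with $\Phi_\eta^{-1}$ immediately gives ${\rm Lip}(\theta_\eta)\leqslant ({\rm Lip}(\theta_0)+c(\eta))/(1-c(\eta))$. For the sup bound on $V_u\cap W_u$, set $v=\Phi_\eta^{-1}(\zeta)$, use $\|v-\zeta\|=\|g_\eta(v)\|\leqslant c(\eta)$, and decompose
\[
\theta_\eta(\zeta)-\theta_0(\zeta)=\bigl[\theta_0(v)-\theta_0(\zeta)\bigr]+(I-\pi_u)(\psi_\eta-\psi_0)(v),
\]
obtaining $\|\theta_\eta(\zeta)-\theta_0(\zeta)\|\leqslant {\rm Lip}(\theta_0)c(\eta)+c(\eta)=(1+{\rm Lip}(\theta_0))c(\eta)$. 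The main obstacle is choosing $W_u$ \emph{uniformly} in $\eta$: one must coordinate the size of the target ball $W_u$, the larger ball $\overline{B^{X_u}_R(0)}\subset V_u$ in which fixed points are sought, and the decay of $c(\eta)$, so that the contraction mapping scheme goes through for every sufficiently small $\eta$ with the same $W_u$. Once this bookkeeping is handled, everything else is routine Lipschitz calculus.
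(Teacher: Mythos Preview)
Your argument is correct and is essentially the same as the paper's: the paper proves this by invoking Proposition~\ref{lem6.8}, whose proof is precisely the Lipschitz inverse-function argument you wrote out (show $\pi_u\circ\psi_\eta$ is a small Lipschitz perturbation of the identity, invert it, and set $\theta_\eta=(I-\pi_u)\psi_\eta\circ(\pi_u\psi_\eta)^{-1}$), with the uniformity of $W_u$ in $\eta$ coming from item (b) there, exactly as you arranged via the choice of $r<R$ and $\eta_1$.
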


\begin{lemma}\label{prop6.10}
Let $X_u,X_s$ be closed subspaces of $X$ with $X=X_u\oplus X_s$, $\pi_s$ the canonical projection of $X$ into $X_s$. Let $V_s\subset X_s$ a neighborhood of $0$ in $X_s$ and maps $\varphi_\eta\colon V_s\to X$ with $\norma{\varphi_\eta-\varphi_0}_{V_s}\leqslant c(\eta)$ for all $\eta \in [0,1]$, where $c(\eta)\to 0$ as $\eta\to 0^+$.

Also, assume that $\varphi_0(V_s) = \{\sigma_0(\mu)+\mu\colon \mu \in V_s\}$. Then there exist $\eta_1>0$, a neighborhood $W_s$ of $0$ in $X_s$ and maps $\sigma_\eta\colon W_s\to X$ for $\eta\in [0,\eta_1]$ such that $\varphi_\eta(W_s)=\{\sigma_\eta(\mu)+\mu \colon \mu \in W_s\}$
%\{\psi_\eta(0)+\xi+\theta_\eta(\xi)\colon \xi \in W_u\}$
and
$$
{\rm Lip}(\sigma_\eta)\leqslant \frac{{\rm Lip}(\sigma_0)+c(\eta)}{1-c(\eta)} \hbox{ and } \|\sigma_\eta-\sigma_0\|_{V_s\cap W_s,\infty}\leqslant (1+{\rm Lip}(\sigma_0))c(\eta).
$$
\end{lemma}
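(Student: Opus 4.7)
The plan is to deduce Lemma \ref{prop6.10} immediately from Lemma \ref{prop6.9} by a symmetric relabeling of the direct sum decomposition $X = X_u \oplus X_s$. The statement and proof of Lemma \ref{prop6.9} never exploit any distinguished property of $X_u$ vis-\`a-vis $X_s$; they only use that $X$ splits as a topological direct sum of two closed subspaces with the associated canonical projection. Consequently, swapping the labels $u \leftrightarrow s$ throughout yields precisely the content of Lemma \ref{prop6.10}. There is no genuine obstacle here: the entire argument is a translation.

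Concretely, I would set $\tilde{X}_u := X_s$, $\tilde{X}_s := X_u$, so that $X = \tilde{X}_u \oplus \tilde{X}_s$ with canonical projection $\tilde{\pi}_u = \pi_s$. Then define $\tilde{V}_u := V_s \subset \tilde{X}_u$, and set $\tilde{\psi}_\eta := \varphi_\eta$ and $\tilde{\theta}_0 := \sigma_0$. The hypothesis $\varphi_0(V_s) = \{\sigma_0(\mu) + \mu : \mu \in V_s\}$ becomes
$$
\tilde{\psi}_0(\tilde{V}_u) = \{\mu + \tilde{\theta}_0(\mu) : \mu \in \tilde{V}_u\},
$$
which is the graph form required by Lemma \ref{prop6.9}, and $\norma{\varphi_\eta - \varphi_0}_{V_s} \leqslant c(\eta)$ becomes $\norma{\tilde{\psi}_\eta - \tilde{\psi}_0}_{\tilde{V}_u} \leqslant c(\eta)$. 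All hypotheses of Lemma \ref{prop6.9} thus hold for the tilded data.

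Applying Lemma \ref{prop6.9}, one obtains $\eta_1 > 0$, a neighborhood $\tilde{W}_u$ of $0$ in $\tilde{X}_u = X_s$, and maps $\tilde{\theta}_\eta : \tilde{W}_u \to X$ for $\eta \in [0, \eta_1]$ such that $\tilde{\psi}_\eta(\tilde{W}_u) = \{\mu + \tilde{\theta}_\eta(\mu) : \mu \in \tilde{W}_u\}$ and
$$
\mathrm{Lip}(\tilde{\theta}_\eta) \leqslant \frac{\mathrm{Lip}(\tilde{\theta}_0) + c(\eta)}{1 - c(\eta)}, \qquad \|\tilde{\theta}_\eta - \tilde{\theta}_0\|_{\tilde{V}_u \cap \tilde{W}_u, \infty} \leqslant (1 + \mathrm{Lip}(\tilde{\theta}_0)) c(\eta).
$$
Relabeling $W_s := \tilde{W}_u \subset X_s$ and $\sigma_\eta := \tilde{\theta}_\eta$, this is exactly the conclusion of Lemma \ref{prop6.10}; the expression $\mu + \sigma_\eta(\mu)$ is the same set as $\sigma_\eta(\mu) + \mu$, so the required graph representation over $X_s$ holds. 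The only thing to verify is that the proof of Lemma \ref{prop6.9} is indeed symmetric in the factors; this should be immediate from inspection of its argument in Appendix \ref{app:LipschitzMani}.
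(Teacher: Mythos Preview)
Your proposal is correct and matches the paper's own treatment: the paper states that the proof of Lemma \ref{prop6.10} is ``analogous'' to that of Lemma \ref{prop6.9}, which amounts precisely to the $u \leftrightarrow s$ relabeling you carry out. Since Lemma \ref{prop6.9} (via Proposition \ref{lem6.8}) uses nothing beyond the topological direct sum structure and the associated projection, your reduction by symmetry is exactly the intended argument.
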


With these two results and the Proposition \ref{st4} we can prove the main theorem of our work.

\begin{theorem}\label{st3}
Let $\{T_\eta\}_{\eta\in [0,1]}$ be a collectively asymptotically compact and continuous family of maps at $\eta=0$ in $C(X)$. Suppose that
\begin{itemize}
\item[\bf (a)] $T_\eta$ has a global attractor $\mathcal{A}_\eta$ for each $\eta\in[0,1]$ and $\cup_{\eta\in [0,1]}\mathcal{A}_\eta$ is precompact in $X$;
%with a
%family $\mathcal{E}_\eta=\{x^\ast_{1,\eta},\ldots,x^{\ast}_{p,\eta}\}$ of $\mathfrak{L}$-hyperbolic equilibria;
\item[\bf (b)] there exists $p\in \mathbb{N}$ such that $T_\eta$ has a family of isolated invariants $\mathcal{E}_\eta=\{x^\ast_{1,\eta},\ldots,x^\ast_{p,\eta}\}$ for each $\eta\in[0,1]$ consisting only of $\mathfrak{L}$-hyperbolic points and
$$
\max_{i=1,\ldots,p}\|x^\ast_{i,\eta}-x^\ast_{i,0}\|\to 0 \hbox{ as } \eta\to 0^+ \hbox{ for }i=1,\ldots,p;
$$
\item[\bf (c)] there exist neighborhood $U$ of $\cup_{\eta\in [0,1]}\mathcal{E}_\eta$
such that $\norma{T_\eta-T_0}_U\to 0$ as $\eta\to 0^+$ and
$T_\eta \colon U\to T_\eta(U)$ is bi-Lipschitz for each $\eta\in [0,1]$.
\item[\bf (d)] {\color{black}$T_0$ is a $\mathfrak{L}$-Morse-Smale map and $T_0\in C^{1+}(\mathcal{O}_\delta(\mathcal{E}_0),X)$ for some $\delta>0$.}
\end{itemize}

%Then there exists $\eta_0>0$ such that if $W^{\rm u,0}(x^*_{i,0})\cap W^{\rm s,0}_{loc}(x^*_{j,0})\neq \varnothing$ then for each $\eta \in [0,\eta_0]$ there exists a point $x_\eta$ such that $W^{\rm u,\eta}(x^*_{i,\eta})\pitchfork_{\mathfrak{L},x_\eta} W^{\rm s,\eta}_{loc}(x^*_{j,\eta})$.

Then there exists $\eta_0>0$ such that $T_\eta$ is a $\mathfrak{L}$-Morse-Smale map with $\mathcal{A}_\eta$ geometrically equivalent to $\mathcal{A}_0$ for all $\eta\in [0,\eta_0]$.

%
%\begin{equation}\label{trasnf_1}
%[T_0^nW^{u,\rho}_{0}(x^*_{i,0})\pitchfork W^{s,\rho}_{0}(x^*_{j,0})] \Longrightarrow [T_\eta^nW^{u,\rho}_{\eta}(x^*_{i,\eta})\cap W^{s,\rho}_{\eta}(x^*_{j,\eta})\neq \emptyset,\:\forall 0\leqslant \eta\leqslant \eta_0].
%\end{equation}
\end{theorem}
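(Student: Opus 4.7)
The plan is to verify each of the four conditions of Definition \ref{WLSM} for $T_\eta$ with $\eta$ small, and then exhibit a connection-preserving bijection $\mathcal{B}\colon \mathfrak{E}_0\to \mathfrak{E}_\eta$. Items (i) and (ii) of Definition \ref{WLSM} come essentially for free: Proposition \ref{prop:TS} applies under our hypotheses to give that $T_\eta$ is dynamically gradient with respect to $\mathfrak{E}_\eta$ for all $\eta\in [0,\eta_1]$, while the bi-Lipschitz property in a neighborhood of $\mathfrak{E}_\eta$ follows from hypothesis (c) (after shrinking $\eta$ so that $\mathfrak{E}_\eta\subset U$). The $\mathfrak{L}$-hyperbolicity of every $x^\ast_{i,\eta}$ is hypothesis (b). Thus the real content lies in items (iii) and (iv) together with the geometric equivalence, all of which reduce to establishing a two-way correspondence between connections for $T_0$ and connections for $T_\eta$.

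First I would handle the forward direction: if $W^{{\rm u},\eta}(x^\ast_{i,\eta})\cap W^{{\rm s},\eta}_{loc}(x^\ast_{j,\eta})\neq \varnothing$ for small $\eta$, then by Proposition \ref{st4} (which applies here since hypothesis (d) gives $\mathfrak{L}$-Morse-Smale for $T_0$, in particular item (iv) of Definition \ref{WLSM}), the limit connection $W^{{\rm u},0}(x^\ast_{i,0})\cap W^{{\rm s},0}_{loc}(x^\ast_{j,0})$ is non-empty. For the backward direction, fix indices $i,j$ with $W^{{\rm u},0}(x^\ast_{i,0})\cap W^{{\rm s},0}_{loc}(x^\ast_{j,0})\neq \varnothing$. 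Hypothesis (d), together with item (iii) of Definition \ref{WLSM} applied to $T_0$, produces $n\in \mathbb{N}$ and $x_0\in X$ with $T_0^n W^{{\rm u}}_{loc}(x^\ast_{i,0})\pitchfork_{\mathfrak{L},x_0} W^{{\rm s},0}_{loc}(x^\ast_{j,0})$, so these two manifolds are given near $x_0$ as graphs of Lipschitz maps $\theta_0,\sigma_0$ over complementary subspaces $X_1, X_2$ in the sense of Definition \ref{def:Ltransversal}. Theorem \ref{convvarfinal*} (whose hypotheses are guaranteed by (a)--(d)) gives that the families $\{T_\eta^n W^{{\rm u},\eta}_{loc}(x^\ast_{i,\eta})\}_{\eta}$ and $\{W^{{\rm s},\eta}_{loc}(x^\ast_{j,\eta})\}_\eta$ are continuous at $\eta=0$ in the $\norma{\cdot}_U$ norm. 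Applying Lemmas \ref{prop6.9} and \ref{prop6.10} to the local graph parametrizations, I can rewrite, for small $\eta$, the perturbed manifolds as graphs of Lipschitz maps $\tilde{\theta}_\eta,\tilde{\sigma}_\eta$ over the \emph{same} domains used for $\theta_0,\sigma_0$, with $\|\theta_0-\tilde{\theta}_\eta\|_{\infty}+\|\sigma_0-\tilde{\sigma}_\eta\|_\infty\to 0$ and uniform Lipschitz bounds. This places us exactly in the situation of Proposition \ref{lem6.3}, and part (a) of that proposition (using the finite-dimensional or $<1$ product-of-Lipschitz case, whichever applies via item (d) and Remark \ref{rem:UnifConstants}) yields a common point of $T_\eta^n W^{{\rm u},\eta}_{loc}(x^\ast_{i,\eta})$ and $W^{{\rm s},\eta}_{loc}(x^\ast_{j,\eta})$, producing the desired connection in $T_\eta$; part (b) delivers the $\mathfrak{L}$-transversality at that intersection point, which verifies item (iii) of Definition \ref{WLSM} for $T_\eta$.

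With the two-way correspondence in hand, item (iv) for $T_\eta$ is a diagram chase: two perturbed connections $i\to j$ and $j\to k$ yield, by the forward direction, two limit connections; item (iv) for $T_0$ concatenates them; the backward direction then returns the composed connection in $T_\eta$. Finally, define $\mathcal{B}(x^\ast_{i,0})=x^\ast_{i,\eta}$; hypothesis (b) of the theorem and Corollary \ref{cor:Continuity} ensure $\mathcal{B}$ is a bijection of the finite sets $\mathfrak{E}_0$ and $\mathfrak{E}_\eta$, and the two-way correspondence is precisely the condition \eqref{eq17} of geometric equivalence. The main obstacle is the backward direction: one must obtain the perturbed local manifolds near $x_0$ as graphs over the \emph{same} domains as those of $T_0$, with simultaneous control of the Lipschitz constants to stay below $1$ and of the sup-norm gap to meet the threshold \eqref{eq:LipTodas} in Proposition \ref{lem6.3}. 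This is precisely why the $C^{1+}$ hypothesis (d) and the small-parameter remark following Remark \ref{rem:UnifConstants} are needed---they ensure the Lipschitz constants of $\theta_0,\sigma_0$ can be taken strictly less than $1$ uniformly, so that the perturbative Lemmas \ref{prop6.9}--\ref{prop6.10} do not push us past the threshold required to invoke Proposition \ref{lem6.3}.
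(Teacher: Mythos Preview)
Your proposal is correct and follows the same core strategy as the paper: for the backward direction (the substantive part), both you and the paper use the $\mathfrak{L}$-transversality of $T_0$ at $x_0$ to obtain graphs $\theta_0,\sigma_0$, invoke Theorem \ref{convvarfinal*} for convergence of the local manifolds, apply Lemmas \ref{prop6.9}--\ref{prop6.10} to re-graph the perturbed manifolds over the same domains, and then conclude via Proposition \ref{lem6.3}. Your write-up is in fact more complete than the paper's, which only spells out this backward direction explicitly; you correctly add the forward direction via Proposition \ref{st4}, the diagram chase for item (iv), and the explicit bijection $\mathcal{B}$---all needed for the full statement but left implicit in the paper. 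One minor simplification: you do not need part (a) of Proposition \ref{lem6.3} separately, since part (b) already furnishes an intersection point together with the $\mathfrak{L}$-transversality there; the paper invokes only (b).
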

\begin{proof} Since $T_0$ is $\mathfrak{L}$-Morse-Smale, there exists a point $x_0\in W^{\rm u,0}(x^*_{i,0})\cap W^{\rm s,0}_{loc}(x^*_{j,0})$, and the intersection at this point is $\mathfrak{L}$-transversal. Hence there exists a decomposition $X=X_1\oplus X_2$, $r>0$ and maps $\theta_0\colon B_r^{X_1}(0)\to X_2$, $\sigma_0\colon B_r^{X_2}(0)\to X_1$ with $\theta_0(0)=\sigma_0(0)=0$, ${\rm Lip}(\theta_0)<1$ and ${\rm Lip}(\sigma_0)<1$ such that
$$
\{x_0+\xi+\theta_0(\xi)\colon \xi\in B_r^{X_1}(0)\}\subset T_0^nW_{loc}^{\rm u,0}(x^*_{i,0}) \hbox{ and } \{x_0+\sigma_0(\mu)+\mu \colon \mu\in B_r^{X_2}(0)\}\subset W^{\rm s,0}_{loc}(x^*_{j,0}).
$$

By making a translation, we may assume that $x_0=0$. Using Theorem \ref{convvarfinal*} and Lemmas \ref{prop6.9}  and \ref{prop6.10}, there exist $\eta_1>0$ and $0<r_0<r$, maps $\theta_\eta\colon B_{r_0}^{X_1}(0)\to X_2$, $\sigma_\eta\colon B_{r_0}^{X_2}(0)\to X_1$ for $\eta\in [0,\eta_0]$ with
${\rm Lip}(\theta_\eta)\to {\rm Lip}(\theta_0)$, ${\rm Lip}(\sigma_\eta)\to {\rm Lip}(\sigma_0)$, $ \|\theta_\eta-\theta_0\|_{B_{r_0}^{X_1}(0),\infty}\to 0$ and $\|\sigma_\eta-\sigma_0\|_{B_{r_0}^{X_2}(0),\infty}\to 0$ as $\eta\to 0^+$ such that
$$
\{x_0 + \xi +\theta_\eta(\xi)\colon \xi \in B_{r_0}^{X_1}(0)\}\subset T_\eta^nW_{loc}^{\rm u,\eta}(x^*_{i,\eta}) \hbox{ and } \{x_0 + \sigma_\eta(\mu)+\mu\colon \mu \in B_{r_0}^{X_2}(0)\}\subset W^{\rm s,\eta}_{loc}(x^*_{i,\eta}),
$$
for each $\eta\in[0,\eta_0]$.
%Also by Theorem \ref{convvarfinal*}, there exists maps $\psi_\eta\colon B_{r_0}^{X_2}(0)\to X$ such that
%$$
%\{x_\eta + \psi(\xi) \colon \xi \in B_{r_0}^{X_2}(0)\}\subset W^{\rm s,\eta}_{loc}(x^*_{j,\eta}),
%$$
%with $\|(I-\pi_u)\psi_\eta-\theta_0\|_{B^{X_2}_{r_0}(0),\infty}\to 0$  and $\|I-\pi_u\psi\|_{B^{X_1}_{r_0}(0),\infty}\to 0$ as $\eta\to 0^+$.
Therefore, from Proposition \ref{lem6.3} item {\rm (b)}, there exists $x_\eta$ such that $W^{\rm u,\eta}(x^*_{i,\eta})\pitchfork_{\mathfrak{L},x_\eta} W^{\rm s,\eta}_{loc}(x^*_{j,\eta})$ for each $\eta$ sufficiently small.
\end{proof}

Then there exists $\eta_0>0$ such that $T_\eta$ is a $\mathfrak{L}$-Morse-Smale map with $\mathcal{A}_\eta$ geometrically equivalent to $\mathcal{A}_0$ for all $\eta\in [0,\eta_0]$.

\section{Example} \label{sec:Ex}
Consider the following family of autonomous partial differential equations given by
\begin{equation}\label{E1}
\left\{
\begin{array}{l}
u_t- u_{xx} = \lambda( u-u^3)+\eta \sin (u_x), \quad x\in (0,\pi), \ t>0\\
u(t,0)=u(t,\pi)=0, \quad t>0\\
u(0,x)=u_0(x), \quad x\in[0,\pi],
\end{array}
\right.
\end{equation}
where $\eta\in [0,1]$ and $\lambda>0$. Let $X=L^2(0,\pi)$ with norm $\|\cdot\|$, $-\Delta=A\colon D(A)\subset X\to X$ is the negative Dirichlet Laplacian with $D(A)=H^1_0(0,\pi)\cap H^2(0,\pi)$, and $X^{\alpha/2}$ the fractional power space of $X$ with norm $\|\cdot\|_\alpha:=\|A^{\alpha/2}(\cdot)\|$, for $\alpha\in \mathbb{R}$. We can write the problem as an abstract evolution equation, given by
\begin{equation}\label{E2}
\left\{
\begin{array}{l}
u_t+A u = f(u) +F_\eta(u), \ t>0\\
u(0)=u_0 \in X^{1/2},
\end{array}
\right.
\end{equation}
where $f(u)(x) = \lambda u(x) - u^3(x)$ and $F_\eta(u)(x)=\eta \sin(u_x(x))$ for each $x\in [0,\pi]$. Clearly for each $\eta$, the map $F_\eta$ defines a bounded and globally Lipschitz operator from $X^{1/2}$ to $X$, since
\begin{equation} \label{F1}
\|F_\eta(u)\|^2=\int_\Omega |F_\eta (u)(x)|^2dx \leqslant \eta^2 |\Omega|,
\end{equation}
and
\begin{equation} \label{F2}
\|F_\eta (u) - F_\eta (v)\|^2=\int_\Omega|F_\eta (u)(x)-F_\eta(v)(x)|^2dx\leqslant \eta^2 \|u-v\|_1^2,
\end{equation}
for all $u,v\in X$.

For $\eta =0$ we have the Chafee-Infante equation (see \cite{CI}), which is well-posed in $X^{1/2}$ and the solutions exist for all positive time, and as $t\to \infty$ each solution $u(t,\cdot)$ of \eqref{E2} with $\eta=0$ converges in $X^{1/2}$ to an equilibrium solution $\phi$ which satisfies
\begin{equation*}
\left\{
\begin{array}{l}
\phi ''(x) +\lambda( \phi(x) -\phi^3(x) )=0, \quad x\in (0,\pi)\\
\phi(0)=\phi(\pi)=0.
\end{array}
\right.
\end{equation*}

Also they prove that there are only a finite number of such equilibria. In fact if $n^2< \lambda \leqslant (n+1)^2$ there are exactly $2n+1$ equilibria, where $n$ is a nonnegative integer. Moreover, if $0<\lambda \leqslant 1$, the only equilibrium is the zero solution which is globally asymptotically stable. For $\lambda>1$ the zero solution is unstable and also all the others except for two, denotes by $\phi_1^+$ and $\phi_1^-$. These two solutions are characterized by the fact that $\phi_1^-(x)<0<\phi_1^+(x)$ for all $x\in (0,\pi)$ and these solutions are asymptotically stable.

Using \eqref{F1} and \eqref{F2} and the results of \cite{henry} we know that problem \eqref{E2} is also well-posed in $X^{1/2}$ and the solutions exist for all positive times. Hence for each $\eta\in[0,1]$ we obtain a semigroup $\{T_\eta(t)\colon t\geqslant 0\}$ in $X^{1/2}$ and
\begin{equation}\label{eq:Semi}
\begin{aligned}
T_\eta(t)u_0 = e^{-At}u_0 +\int_0^t e^{-A(t-s)}f(T_\eta(s)u_0)ds + \int_0^t e^{-A(t-s)}F_\eta(T_\eta(s)u_0)ds,
\end{aligned}
\end{equation}
for all $t\geqslant 0$ and $u_0\in X^{1/2}$.

The semigroup $\{T_0(t)\colon t\geqslant 0\}$ is the solution of \eqref{E2} and is given by
$$
T_0(t)u_0 = e^{-At}u_0 +\int_0^t e^{-A(t-s)}f(T_0(s)u_0)ds, \hbox{ for all }t\geqslant 0 \hbox{ and } u_0\in X^{1/2}.
$$

Using \cite{Henry1}, we know that for each $\lambda \notin \{1^2,2^2,3^2,\ldots\}$ the time one map $T_0=T_0(1)$ is a $C^2$ Morse-Smale map.
{\color{black}
Let $B=B_r^{X^{1/2}}(0)$ with $r>0$ such that $\mathcal{A}_0\subset \subset B$.
Let $g:\R^+\to [0,1]$, $g\in C^{\infty}(\R^+)$, such that $g([0,r])=\{1\}$ and $g([r+1,\infty))=\{0\}$.
Now we will denote
\begin{equation}\label{eq:semigrupo1}
T_0(t)u_0 = e^{-At}u_0 +\int_0^t e^{-A(t-s)}\tilde{f}(T_0(s)u_0)ds, \hbox{ for all } u_0\in X^{1/2}, t\geqslant 0.
\end{equation}
where $\tilde{f}(x):=g(\|x\|_{1})f(x)$ for $x\in X$. We note that the class of
differenciability of the $\tilde{f}:X^{1/2}\to X$ is the same of the $f:X^{1/2}\to X$
since $g$ is $C^{\infty}$, $X^{1/2}\backslash \{0\}\ni\mapsto \|x\|_{1}\in\R^{+}$ is $C^{\infty}$
(because $X^{1/2}$ is a Hilbert space) and $g(\|\cdot\|_1):X^{1/2}\to \R^+$ is constant
in a neighborhood of the point that $\|\cdot\|_1$ loses differenciability.
%where $\tilde{f}$ is given by
%\begin{equation}\label{extencao_radialf}
%\tilde{f}(x)=\left\{
%\begin{array}{l}
%f(x),\ \hbox{ if }\|x\|\leqslant r, \hbox{ and }\\
%f\left(\frac{r}{\|x\|}x\right), \ \hbox{ if } \|x\|>r.
%\end{array}
%\right.
%\end{equation}

Now we denote $T_0:=T_0(1)$. Thus, we have that $\tilde{f}$ is
bounded and globally Lipschitz.
Note that, $T_0$ continues  automatically a
Morse-Smale semigroup which is $C^2$ on $X$.

We denote
\begin{equation}\label{eq:Semi1}
\begin{aligned}
T_\eta(t)u_0 = e^{-At}u_0 +\int_0^t e^{-A(t-s)}\tilde{f}(T_\eta(s)u_0)ds + \int_0^t e^{-A(t-s)}F_\eta(T_\eta(s)u_0)ds,
\end{aligned}
\end{equation}
and $T_\eta:=T_\eta(1)$.

}

Now, using the results in Section \ref{app:Diff} of the Appendix, we are able to prove the following.

\begin{proposition}
The function $F_\eta\colon X^{1/2}\to X$ is not Fr\'echet-differentiable at any point of $X^{1/2}$, for $\eta\in (0,1]$.
\end{proposition}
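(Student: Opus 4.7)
The plan is to show directly that $F_\eta$ fails the Fr\'echet differentiability test at every $u_0\in X^{1/2}=H^1_0(0,\pi)$ by producing a sequence $h_n\to 0$ in $X^{1/2}$ along which the linearization remainder is not $o(\|h_n\|_{X^{1/2}})$. Morally this is a concrete instance of the general mechanism developed in Section \ref{app:Diff}, whereby a Nemytskii operator built from a non-affine smooth function is nowhere Fr\'echet differentiable between $L^p$ spaces, transported through the bounded linear operator $\partial_x\colon X^{1/2}\to L^2(0,\pi)$. The pointwise estimate $|\sin(a+tb)-\sin(a)-tb\cos(a)|\leqslant t^2 b^2/2$ together with the uniform domination $|t^{-1}(\sin(u_0'+th')-\sin(u_0'))|\leqslant|h'|\in L^2$ and dominated convergence in $L^2$ give, for every $h\in X^{1/2}$, the Gateaux limit
$$
\lim_{t\to 0}\frac{F_\eta(u_0+th)-F_\eta(u_0)}{t}=\eta\cos(u_0')\,h'\quad\text{in }L^2(0,\pi),
$$
so the only candidate for a Fr\'echet derivative at $u_0$ is $DF_\eta(u_0)h=\eta\cos(u_0')h'$.

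To contradict this candidate, I introduce for $c\in\mathbb{R}\setminus\{0\}$ the auxiliary function
$$
\psi_c(y):=\sin(y+c)-\sin(y)-c\cos(y).
$$
Expanding in $c$ at $0$ yields $\psi_c(y)=-\tfrac{c^2}{2}\sin(y)-\tfrac{c^3}{6}\cos(y)+O(c^4)$, so $\psi_c$ is real-analytic in $y$ and not identically zero; hence its zero set $Z_c\subset\mathbb{R}$ is at most countable. Moreover, a $y$ satisfying $\psi_c(y)=0$ simultaneously for \emph{every} $c$ would need $\sin(y)=\cos(y)=0$, which is impossible. Consequently, writing $\mu$ for the pushforward of Lebesgue measure on $(0,\pi)$ under $u_0'$, one can select $c\neq 0$ with $\mu(\{y:\psi_c(y)\neq 0\})>0$, and then pick a Lebesgue point $a_0\in(0,\pi)$ of $\psi_c(u_0')^2$ satisfying $\psi_c(u_0'(a_0))\neq 0$.

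With $(c,a_0)$ fixed, take $\delta_n\downarrow 0$, $A_n:=(a_0,a_0+\delta_n)\subset(0,\pi)$, and define
$$
h_n'(x):=c\,\chi_{A_n}(x)-\frac{c\delta_n}{\pi-\delta_n}\,\chi_{(0,\pi)\setminus A_n}(x),\qquad h_n(x):=\int_0^x h_n'(y)\,dy.
$$
The mean-zero correction in $h_n'$ guarantees $h_n(0)=h_n(\pi)=0$, hence $h_n\in H^1_0$, and $\|h_n\|_{X^{1/2}}^2=\|h_n'\|_{L^2}^2\sim c^2\delta_n$ as $n\to\infty$. The remainder $R_n:=F_\eta(u_0+h_n)-F_\eta(u_0)-\eta\cos(u_0')h_n'$ coincides with $\eta\,\psi_c(u_0'(\cdot))$ on $A_n$ and is pointwise $O(\delta_n^2)$ on $(0,\pi)\setminus A_n$ by the quadratic Taylor bound, so
$$
\frac{\|R_n\|_{L^2}^2}{\|h_n\|_{X^{1/2}}^2}=\frac{\eta^2}{c^2\delta_n(1+o(1))}\int_{A_n}\psi_c(u_0')^2\,dx+O(\delta_n^3)\xrightarrow{n\to\infty}\frac{\eta^2\,\psi_c(u_0'(a_0))^2}{c^2}>0
$$
by the Lebesgue differentiation theorem, contradicting the existence of a Fr\'echet derivative of $F_\eta$ at $u_0$.

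The main obstacle I anticipate is precisely the measure-theoretic selection of $(c,a_0)$: one must exclude the pathological possibility that the image of $u_0'$ always lands in the exceptional countable set $Z_c$. Once this is settled by the short analytic observation that no single $y$ lies in every $Z_c$, the remainder of the argument is a routine Taylor expansion combined with the Lebesgue differentiation theorem.
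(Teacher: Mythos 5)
Your proof is correct, but it takes a genuinely different route from the paper's. The paper factors $F_\eta = G_\eta\circ\partial_x$, observes that $\partial_x\colon X^{1/2}\to X$ is an isometry, and invokes the general Theorem \ref{theo:Diff} of Section \ref{app:Diff} (a Nemytskii operator on $L^p(\Omega)$ that is Fr\'echet differentiable at one point must come from an affine function); since $\sin$ is not affine, the conclusion follows in three lines. You instead inline the underlying mechanism: you identify the only candidate derivative via the Gateaux limit, build explicit increments $h_n\in H^1_0$ whose derivatives are (essentially) characteristic-function bumps, and use the Lebesgue differentiation theorem to show the remainder is of order $\|h_n\|_{X^{1/2}}$ rather than $o(\|h_n\|_{X^{1/2}})$ --- which is exactly the engine inside the proof of Theorem \ref{theo:Diff}, with $\psi_c$ playing the role of the paper's $g_s$ and your selection of $(c,a_0)$ replacing the paper's dichotomy between ``$g_s\equiv 0$ a.e.\ for all $s$'' and the contrary. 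The paper's argument is shorter and modular, but your version buys two things. First, it is more careful about the composition step: $\partial_x(H^1_0(0,\pi))$ is only the closed hyperplane of mean-zero functions in $L^2(0,\pi)$, so differentiability of $F_\eta$ at $u_0$ a priori yields differentiability of $G_\eta$ only along that hyperplane, whereas the test increments $s_0\chi_{B_r(x_0)}$ used in the appendix are not mean-zero; your correction term $-\tfrac{c\delta_n}{\pi-\delta_n}\chi_{(0,\pi)\setminus A_n}$ resolves precisely this point, at the harmless cost of an extra $O(\delta_n^3)$ in the remainder. Second, your two-parameter argument ($c=\pi$ and $c=2\pi$ cannot both trap the pushforward of $u_0'$ in their countable zero sets) makes explicit the selection of a non-degenerate base point, which the appendix handles implicitly through its case analysis.
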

\begin{proof}
For $\eta\in (0,1]$ define the Nemytskii operator $G_\eta\colon X\to X$ by $G_\eta(u)(x)=\eta \sin (u(x))$, and clearly $F_\eta (u) = G_\eta(u_x)$ for each $u\in X^{1/2}$. Since $\partial_x\colon X^{1/2}\to X$ is an isometry and $F_\eta = G_\eta \circ \partial_x$ we have that if $F_\eta$ differentiable at $u_0\in X^{1/2}$ then $G_\eta$ is differentiable at $\partial_xu_0 = (u_0)_x \in X$. This contradicts Theorem \ref{theo:Diff}, since $G_\eta$ does not arise from an affine function.
\end{proof}

Using this proposition, one can see that the theory of small autonomous perturbations of semigroups cannot be applied to obtain geometrical stability of the family of semigroups $\{T_\eta(t)\colon t\geqslant 0\}$, since the perturbation is not continuously differentiable and hence the semigroups $\{T_\eta(t)\colon t\geqslant 0\}$ are not differentiable for $\eta\in(0,1]$. However, we are able to use our results to give a geometrical characterization of the global attractors of the perturbed semigroups.

Since we have a bounded and globally Lipschitz continuous perturbation, with Lipschitz constant less than or equal $\eta$, we can easily obtain that the family of time one maps $\{T_\eta(1)\}_{\eta \in [0,1]}$ is collectively asymptotically compact and continuous at $\eta=0$ in $C(H^1(0,\pi))$. Moreover, we have a global attractor $\mathcal{A}_\eta$ for each $\eta \in [0,1]$ (or sufficiently small, if necessary) such that $\cup_{\eta \in [0,1]}\mathcal{A}_\eta$ precompact in $H^1(0,\pi)$.

{\color{black}
Its easy to see that the Lipschitz convergence of the semigroups on bounded sets, which implies the item {\bf(c)} of the
Theorem \ref{st3}, follows from variational of constants formula and the fact that the nonlinearities
are globally Lipschitz and globally bounded.

Since $\cup_{\eta \in [0,1]}\mathcal{A}_\eta$ is precompact in $H^1(0,\pi)$
and $\|T_\eta-T_0\|_{U,\infty}\to 0$ as $\eta\to 0^+$ for every bounded set in $X^{1/2}$, we have that
$\{\mathcal{E}_{\eta}\}_{\eta\in[0,1]}$ is upper semicontinuous at $\eta=0$.
On the other hand, we have $\|T_\eta-T_0\|_{U,Lip}\stackrel{\eta\to 0}{\longrightarrow}0$
for $U$ bounded in $X^{1/2}$. Thus, from
Corollary \ref{convvarlip}, there exists $p\in \N$ and $\eta_0>0$ such that
$\mathcal{E}_\eta$ contains only $\mathfrak{L}$-hyperbolic fixed points and
$\mathcal{E}_\eta=\{x_{1,\eta}^*,\hdots,x_{p,\eta}^*\}$ which satisfies
\[
\max_{i=1,\hdots,p}\|x_{i,\eta}^*-x_{i,0}^*\|\stackrel{\eta\to 0}{\longrightarrow}0,
\]
i e, the item {\bf(b)} of the Theorem \ref{st3} is hold. Now we can aply the Theorem \ref{st3}
in order to conclude your example.

}

%%%%%%%%%%%%%%%%%%%%%%%%%%%%%%%%%% APPENDIX %%%%%%%%%%%%%%%%%%%%%%%%%%%%%%%%%%%%%%%%%%%%%%%%%%%%%%%%%%%%
\begin{appendix}
\section{Technical results} \label{app:A}

\subsection{Proof of Proposition \ref{hu3}} \label{app:ProofHu3} $ $

{\sc $\ast$ Case 1:} Suppose that $\delta=\infty$.

{\bf \underline{Step 1.}}  Define $\tilde{T}=h^{-1}\circ T\circ h\colon X_u\times X_s\to X_u\times X_s$ with $h:X_u\times X_s\to X$ by $h(\xi,\eta)=\xi+\sigma(\eta)+\eta$. So $h^{-1}$ is well defined since
\[
\|h(\xi,\eta)-h(\la,\mu)\|\geqslant \|(\xi+\eta)-(\la+\mu)\|-\tn{Lip}(\sigma)\|\eta-\mu\|\geqslant (1-\tn{Lip}(\sigma))\|(\xi+\eta)-(\la+\mu)\|
\]
and $\tn{Lip}(\sigma)<1$. Let us show that $\tilde{T}(x)=(L_u\xi+\tilde{N}_u(x),L_s\eta+\tilde{N}_s(x))$ with $x=\xi+\eta$ and
\begin{equation*}
\left\{
\begin{array}{l}
\tilde{N}_u(\xi,\eta)=N_u(\xi+\sigma(\eta)+\eta)-N_u(\sigma(\eta)+\eta)+\sigma(\tilde{\eta})-\sigma(\hat{\eta});\\
\tilde{N}_s(\xi,\eta)=N_s(\xi+\sigma(\eta)+\eta).
\end{array}
\right.
\end{equation*}
such that $\tilde{N}_u(\eta)=0$ for $\eta\in X_{s}$. We know that there exists Lipschitz maps $\theta:X_{u}\rightarrow X_{s}$ and  $\sigma:X_{s}\rightarrow X_{u}$, such that $W^{\rm u}=\{\xi+\theta(\xi): \ \xi \in X_{u}\} \hbox{ and } W^{\rm s}=\{\sigma(\eta)+\eta: \ \eta \in X_{s}\}$, by Proposition \ref{teoUS}. Then if $T(h(\xi+\eta))=h(\hat{\xi}+\hat{\eta})$ we have
%referência de classe de \theta \ref{teoVarC1}.
$$
T(\xi+\sigma(\eta)+\eta)=L_u\xi+L_u\sigma(\eta)+L_s\eta+N_u(\xi+\sigma(\eta)+\eta)+N_s(\xi+\sigma(\eta)+\eta)=\hat{\xi}+\sigma(\hat{\eta})+\hat{\eta},
$$
and thus
$$
\begin{cases}
\hat{\xi}=L_u\xi+L_u\sigma(\eta)+N_u(\xi+\sigma(\eta)+\eta)-\sigma(\hat{\eta})\\
\hat{\eta}=L_s\eta+N_s(\xi+\sigma(\eta)+\eta)
\end{cases}
$$

For $\sigma:X_{s}\rightarrow X_{u}$ we have
$$
\begin{cases}
\sigma(\tilde{\eta})=L_u\sigma(\eta)+N_u(\sigma(\eta)+\eta)\\
\tilde{\eta}=L_s\eta+N_s(\sigma(\eta)+\eta)
\end{cases}
$$
and hence we can rewrite the previous equation as
$$
\begin{cases}
\hat{\xi}=L_u\xi+N_u(\xi+\sigma(\eta)+\eta)-N_u(\sigma(\eta)+\eta)+\sigma(\tilde{\eta})-\sigma(\hat{\eta})\\
\hat{\eta}=L_s\eta+N_s(\xi+\sigma(\eta)+\eta),
\end{cases}
$$

Thus defining $\tilde{N}_u(\xi,\eta)=N_u(\xi+\sigma(\eta)+\eta)-N_u(\sigma(\eta)+\eta)+\sigma(\tilde{\eta})-\sigma(\hat{\eta})$ and $\tilde{N}_s(\xi,\eta)=N_s(\xi+\sigma(\eta)+\eta)$, we obtain the result.

\

Let us show that $\tn{Lip}(\tilde{N}_u),\tn{Lip}(\tilde{N}_s)\leqslant f(\gamma)$ with $f(\gamma)\stackrel{\gamma\to 0}{\to 0}$. We have
\begin{equation}\label{eqhu1}
\begin{array}{rl}
\|\tilde{N}_u(\xi,\eta)-\tilde{N}_u(\la,\mu)\|\leqslant & \|N_u(\xi+\sigma(\eta)+\eta)-N_u(\la+\sigma(\mu)+\mu)\|\\
                                                 + & \|N_u(\sigma(\eta)+\eta)-N_u(\sigma(\mu)+\mu)\|+\|\sigma(\tilde{\eta})-\sigma(\tilde{\mu})\|\\
                                                 + & \|\sigma(\hat{\eta})-\sigma(\hat{\mu})\|,
\end{array}
\end{equation}
but
\begin{equation*}
\begin{array}{rl}
\|N_u(\xi+\sigma(\eta)+\eta)-N_u(\la+\sigma(\mu)+\mu)\|\leqslant & \gamma[\|(\xi+\eta)-(\la+\mu)\|+\tn{Lip}(\sigma)\|\eta-\mu\|]\\
\leqslant & \gamma(1+\tn{Lip}(\sigma))\|(\xi,\eta)-(\la,\mu)\|,
\end{array}
\end{equation*}
and in particular
\[
\|N_u(\sigma(\eta)+\eta)-N_u(\sigma(\mu)+\mu)\|\leqslant \gamma(1+\tn{Lip}(\sigma))\|(\xi,\eta)-(\la,\mu)\|.
\]

Now
\begin{equation*}
\begin{array}{rl}
\|\sigma(\tilde{\eta})-\sigma(\tilde{\mu})\|\leqslant & \tn{Lip}(\sigma)\|\tilde{\eta}-\tilde{\mu}\|\\
= & \tn{Lip}(\sigma)\|(L_s\eta+N_s(\sigma(\eta)+\eta))-(L_s\mu+N_s(\sigma(\mu)+\mu))\|\\
\leqslant & \tn{Lip}(\sigma)[b\|\eta-\mu\|+\gamma(1+\tn{Lip}(\sigma))\|\eta-\mu\|]\\
\leqslant & (b+2\gamma)\tn{Lip}(\sigma)\|(\xi,\eta)-(\la,\mu)\|.
\end{array}
\end{equation*}
and
\begin{equation*}
\begin{array}{rl}
\|\sigma(\hat{\eta})-\sigma(\hat{\mu})\|\leqslant & \tn{Lip}(\sigma)\|\hat{\eta}-\hat{\mu}\|\\
= & \|(L_s\eta+N_s(\xi+\sigma(\eta)+\eta))-(L_s\mu+N_s(\la+\sigma(\mu)+\mu))\|\\
\leqslant & \tn{Lip}(\sigma)\left[b\|\eta-\mu\|+\gamma(\|(\xi+\eta)-(\la+\mu)\|+\tn{Lip}(\sigma)\|\eta-\mu\|)\right]\\
\leqslant & (b+2\gamma)\tn{Lip}(\sigma)\|(\xi+\eta)-(\la+\mu)\|.
\end{array}
\end{equation*}

Therefore $\tn{Lip}(\tilde{N_u})\leqslant 2[\gamma(1+\tn{Lip}(\sigma))+(b+2\gamma)\tn{Lip}(\sigma)]$, and
since $\tn{Lip}(\sigma)\leqslant \tfrac{\gamma}{a-b-3\gamma}$, we have $\tn{Lip}(\tilde{N_u})\leqslant \tfrac{2a\gamma}{a-b-3\gamma}=f(\gamma)$. Analogously $\tn{Lip}(\tilde{N_s})\leqslant \gamma(1+\tn{Lip}(\sigma))\leqslant f(\gamma)$.  Since $f(\gamma){\to 0}$ as $\gamma\to 0^+$, it follows that there exist $\gamma_0=\gamma_0(L,a,b)$ such that
\begin{equation}\label{eqhu2}
\|(I-L)^{-1}\|\cdot f(\gamma)<1 \quad   \hbox{ and } \quad  b+2f(\gamma)<1<a-2f(\gamma),
\end{equation}
 for all  $\gamma\in (0,\gamma_0]$. Since $(I-L)^{-1}x=(I-L_u)^{-1}x_{u}+(I-L_s)^{-1}x_{s}$, we have $\|(I-L)^{-1}\|\leqslant \tfrac{1}{1-b}+\tfrac{a}{a-1}$.
Thus, $\gamma_0=\gamma_0(a,b)$ and then $0$ is a weakly $\mathfrak{L}$-hyperbolic equilibrium for $\tilde{T}$.
% with parameters $f(\gamma),a,b,\infty$.
\smallskip

{\bf \underline{Step 2.}} Let $\gamma\in (0,\gamma_0]$ as in \eqref{eqhu2}. Define $S=k^{-1}\circ \tilde{T}\circ k=g^{-1}\circ T\circ g$ with $k\colon X\to X_{u}\times X_{s}$ by $k(\xi+\eta)=(\xi,\tilde{\theta}(\xi)+\eta)$, $g=h\circ k$ with $\tilde{\theta}\colon X_{u}\to X_{s}$, $\tn{Lip}(\tilde{\theta})\leqslant \dfrac{f(\gamma)}{a-b-3f(\gamma)}= f_*(\gamma)<1$ and $W^{u}(\tilde{T},0)=\{\xi+\tilde{\theta}(\xi):\xi\in X_{u}\}$.
We will show that here exist $\gamma^*=\gamma^*(a,b)>0$ such that if $\gamma<\gamma^*$, then $g$ is bi-Lipschitz and $S$ is well defined.

Note that $g(\xi+\eta)=\xi+\eta+\sigma(\tilde{\theta}(\xi)+\eta)+\tilde{\theta}(\xi)$ and
\begin{equation*}
\begin{array}{rl}
\|g(\xi+\eta)-g(\la+\mu)\|\geqslant &\!\!\! \|(\xi+\eta)-(\la+\mu)\|- \tn{Lip}(\sigma)[\tn{Lip}(\tilde{\theta})\|\xi-\la\|+\|\eta-\mu\|]-\tn{Lip}(\tilde{\theta})\|\xi-\la\|\\
                          \geqslant &\!\!\! [1-2\tn{Lip}(\tilde{\theta})(1+\tn{Lip}(\sigma))]\|(\xi+\eta)-(\la+\mu)\|.
\end{array}
\end{equation*}

Thus, if $\gamma^*>0$ is such that $2f_*(\gamma)(\frac{a-b-2\gamma}{a-b-3\gamma})<1$ for all $\gamma\in (0,\gamma^*)$, we have $g$ bi-lipschitz and $\tn{Lip}(g^{-1})\leqslant \left[1-2f_*(\gamma)(\frac{a-b-2\gamma}{a-b-3\gamma})\right]^{-1}$. Therefore $\|g^{-1}(x)\|\leqslant \delta_2\|x\|$ with $\delta_2=\left[1-2f_*(\gamma)(\frac{a-b-2\gamma}{a-b-3\gamma})\right]^{-1}$.

Let us show that $S=L+\hat{N}$ with $\tn{Lip}(\hat{N})\leqslant f_1(\gamma)$ and $f_1(\gamma)\stackrel{\gamma\to 0}{\to 0}$. If $x=\xi+\eta, \hat{x}=\hat{\xi}+\hat{\eta}$ with $\xi,\hat{\xi}\in X_{u}$ and $\eta,\hat{\eta}\in X_{s}$ then $Sx=\hat{x}$ iff $\tilde{T}k(x)=k(\hat{x})$, which is true iff
$$
\tilde{T}(\xi,\tilde{\theta}(\xi)+\eta)=(\hat{\xi},\tilde{\theta}(\hat{\xi})+\hat{\eta}).
$$

Thus
\begin{equation*}
\left\{
\begin{array}{l}
\hat{\xi}=L_u\xi+\tilde{N_u}(\xi,\tilde{\theta}(\xi)+\eta);\\
\tilde{\theta}(\hat{\xi})+\hat{\eta}=L_s(\tilde{\theta}(\xi)+\eta)+\tilde{N_s}(\xi,\tilde{\theta}(\xi)+\eta).
\end{array}
\right.
\end{equation*}

On the other hand, $\tilde{\theta}$ satisfies
\begin{equation*}
\left\{
\begin{array}{l}
\tilde{\xi}=L_u\xi+\tilde{N_u}(\xi,\tilde{\theta}(\xi));\\
\tilde{\theta}(\tilde{\xi})=L_s\tilde{\theta}(\xi)+\tilde{N_s}(\xi,\tilde{\theta}(\xi))
\end{array}
\right.
\end{equation*}
and then$ \hat{\eta}=L_s\eta+\tilde{N_s}(\xi,\tilde{\theta}(\xi)+\eta)-\tilde{N_s}(\xi,\tilde{\theta}(\xi))+\tilde{\theta}(\tilde{\xi})-\tilde{\theta}(\hat{\xi})$; that is, $S=L+\hat{N}$ with $\hat{N}=\hat{N_u}+\hat{N_s}$, $\hat{N_u}(x)=\tilde{N_u}(\xi,\tilde{\theta}(\xi)+\eta)$ and $\hat{N_s}(x)=\tilde{N_s}(\xi,\tilde{\theta}(\xi)+\eta)-\tilde{N_s}(\xi,\tilde{\theta}(\xi))+\tilde{\theta}(\tilde{\xi})-\tilde{\theta}(\hat{\xi})$.  Note that $\hat{N_u}(\eta)=\tilde{N_u}(0,\eta)=0$ for $\eta\in X_{s}$. Moreover, $\hat{N_s}(\xi)=0$ for $\xi\in X_{u}$ because in this case $\eta=0$ and then $\hat{\xi}=\tilde{\xi}$.

To compute $\tn{Lip}(\hat{N_u})$ and $\tn{Lip}(\hat{N_s})$ let $x=\xi+\eta, y=\la+\mu$. Then
\begin{equation*}
\begin{array}{rl}
\|\hat{N_u}(x)-\hat{N_u}(y)\|=&\|\tilde{N_u}(\xi,\tilde{\theta}(\xi)+\eta)-\tilde{N_u}(\la,\tilde{\theta}(\mu)+\mu)\|\\
                         \leqslant & f(\gamma)(\|(\xi,\eta)-(\la,\mu)\|+\tn{Lip}(\tilde{\theta})\|\eta-\mu\|)\\
                         \leqslant & f(\gamma)(1+\tn{Lip}(\tilde{\theta}))\|(\xi,\eta)-(\la,\mu)\|=f(\gamma)(1+\tn{Lip}(\tilde{\theta}))\|x-y\|
\end{array}
\end{equation*}
and
\begin{equation*}
\begin{array}{rl}
\|\hat{N_s}(x)-\hat{N_s}(y)\|\leqslant & \|\tilde{N_s}(\xi,\tilde{\theta}(\xi)+\eta)-\tilde{N_s}(\la,\tilde{\theta}(\la)+\mu)\|+
\|\tilde{N_s}(\xi,\tilde{\theta}(\xi))-\tilde{N_s}(\la,\tilde{\theta}(\la))\|\\
                                + & \|\tilde{\theta}(\tilde{\xi})-\tilde{\theta}(\tilde{\la})\|+\|\tilde{\theta}(\hat{\xi})-\tilde{\theta}(\hat{\la})\|.
\end{array}
\end{equation*}

Analogously
\[
\|\tilde{N_s}(\xi,\tilde{\theta}(\xi)+\eta)-\tilde{N_s}(\la,\tilde{\theta}(\la)+\mu)\|+
\|\tilde{N_s}(\xi,\tilde{\theta}(\xi))-\tilde{N_s}(\la,\tilde{\theta}(\la))\|\leqslant 2f(\gamma)(1+\tn{Lip}(\tilde{\theta}))\|x-y\|,
\]
and hence
\[
\|\tilde{\theta}(\tilde{\xi})-\tilde{\theta}(\tilde{\la})\|\leqslant [b\tn{Lip}(\tilde{\theta})+f(\gamma)(1+\tn{Lip}(\tilde{\theta}))]\|\xi-\la\|
\leqslant[b\tn{Lip}(\tilde{\theta})+f(\gamma)(1+\tn{Lip}(\tilde{\theta}))]\|x-y\|.
\]

Thus
\begin{equation*}
\begin{array}{rl}
\|\tilde{\theta}(\hat{\xi})-\tilde{\theta}(\hat{\la})\|\leqslant & \tn{Lip}(\tilde{\theta})\|\hat{\xi}-\hat{\la}\|\\
\leqslant & \tn{Lip}(\tilde{\theta})[a^{-1}\|\xi-\la\|+f(\gamma)(\|(\xi,\eta)-(\la,\mu)\|+\tn{Lip}(\tilde{\theta})\|\xi-\la\|)]\\
\leqslant & \tn{Lip}(\tilde{\theta})[a^{-1}+f(\gamma)(1+\tn{Lip}(\tilde{\theta}))]\|x-y\|.
\end{array}
\end{equation*}
and, using that $\tn{Lip}(\tilde{\theta})\leqslant f_*(\gamma)$, we obtain
\begin{equation*}
\begin{array}{rl}
\tn{Lip}(\hat{N_s})\leqslant & 3f(\gamma)(1+\tn{Lip}(\tilde{\theta}))+b\tn{Lip}(\tilde{\theta})
+\tn{Lip}(\tilde{\theta})[a^{-1}+f(\gamma)(1+\tn{Lip}(\tilde{\theta}))]\\
\leqslant & f(\gamma)(1+f_*(\gamma))(3+f_*(\gamma))+f_*(\gamma)(b+a^{-1})=f_1(\gamma).
\end{array}
\end{equation*}
with $f_1(\gamma)\to 0$ as $\gamma\to 0^+$. So $S=L+\hat{N}$ with
$\tn{Lip}(\hat{N})\leqslant f_1(\gamma)$. In particular there exists $\gamma_1=\gamma_1(a,b)\in (0,\min\{\gamma_0,\gamma^*\}]$ such that for each $\gamma\in (0,\gamma_1]$, we have
\[
\|(I-L)^{-1}\|f_1(\gamma)<1\quad \mbox{ and } \quad b+2f_1(\gamma)<1<a-2f_1(\gamma).
\]

Hence $0$ is a $\mathfrak{L}$-hiperbolic equilibrium for $S$ .
%with parameters $f_1(\gamma),a,b,\infty$.
Moreover, $W^u(S,0)=X_u$ and $W^s(S,0)=X_s$ which concludes this case.

{\sc $\ast$ Case 2:} $\delta<\infty$.

Let $T_1\in C(X)$ with $0$ as a weakly hyperbolic point and decomposition $T_1=L+N_1$ with parameters $\gamma,a,b,\delta$ and define $T=L+N$ with
\begin{equation}\label{eqhu3}
N(x)=
\left\{
\begin{split}
&N_1(x), \quad \quad \quad \hbox{ for } \|x\|\leqslant \delta\\
&N_1\left(\tfrac{\delta}{\|x\|}x\right),\quad \hbox{ for } \|x\| > \delta
\end{split}
\right.
\end{equation}
and $0$ is an weakly hyperbolic point with decomposition $T=L+N$ and parameters $\gamma,a,b,\infty$. Note that if $\gamma=\tn{Lip}(N_1)$ then $\tn{Lip}(N)\leqslant 2\gamma$. Define as in the first case $h\colon X_{u}\times X_{s}\to X$ by $h(\xi,\eta)=\xi+\sigma(\eta)+\eta$ and $S=g^{-1}\circ T\circ g$ with $k\colon X\to X_{u}\times X_{s}$ by $k(\xi+\eta)=(\xi,\tilde{\theta}(\xi)+\eta)$, $g=h\circ k$. Assume $0<\gamma\leqslant\gamma_1$ with $\gamma_1$ as before. Then $W^u(S,0)=X_u$ and $W^s(S,0)=X_s$. Defining $S_1=g^{-1}\circ T_1\circ g$. Now we show that there exists $\delta_1=\delta_1(a,b,\delta)>0$ such that $W^{s}_{\delta_1}(S_1,0)=W^{s}_{\delta_1}(S,0)$ and $W^{u}_{\delta_1}(S_1,0)=W^{u}_{\delta_1}(S,0)$.

Note that $\|g(\xi+\eta)\|=\|\xi+\sigma(\tilde{\theta}(\xi)+\eta)+\tilde{\theta}(\xi)+\eta\|\leqslant (1+\tn{Lip}(\tilde{\theta}))(1+\tn{Lip}(\sigma))\|\xi+\eta\|$ and defining
\[
\delta_1=\delta \left[\left(1+\dfrac{f(\gamma_1)}{a-b-3f(\gamma_1)}\right)\left(1+\dfrac{\gamma_1}{a-b-3\gamma_1}\right)\right]^{-1},
\]
with $f(\gamma_1)=\tfrac{2a\gamma_1}{a-b-3\gamma_1}$, we obtain $g(B_{\delta_1}^X(0))\subset B_{\delta}^X(0)$ and therefore $S|_{B_{\delta_1}^X(0)}={S_1}|_{B_{\delta_1}^X(0)}$, which concludes the result.

\subsection{Proof of Lemma \ref{convvarlipfinal_lem}} $ $ \label{Lemma52}

 We will present the proof for the unstable manifolds. The proof for the stable manifold is analogous and will be omitted. Taking $S_\eta(x)=T(x+x^\ast_\eta)-x_\eta^\ast$, for $x\in X$ and $\eta\in[0,1]$, we may assume that all the $\mathfrak{L}$-hyperbolic equilibria are $x^{\ast}_\eta=0$. Also, from the proof of Proposition \ref{teoconv}, we can assume that $T_\eta = L+N_\eta$ for each $\eta \in [0,1]$, where $L$ and $N_\eta$ satisfy the conditions of Definition \ref{hyperbolic_fixed_point} in $X$ for $\gamma,a,b>0$ independent of $\eta\in [0,1]$.

Applying a bi-Lipschitz change of variable in $X$, we can assume that $W^{u,0}_{loc}(0)=X_u$ and $W^{\rm s,0}_{loc}(0)=X_s$, and from the proof of Theorem \ref{teoUS}, there exists a family of maps $\theta_\eta\colon V_u\to X_s$ with ${\rm Lip}(\theta_\eta)<1$ such that $W^{\rm u,\eta}_{loc}(0)=\{\xi+\theta_\eta(\xi)\colon \xi \in V_u\}$, for sufficiently small $\eta$, with $\theta_0=0$, and
\begin{equation*}\label{X}
\left\{
\begin{split}
&h_{\eta}(\xi)=L_u\xi+N_{\eta,u}(\xi+\theta_\eta(\xi))\\
&\theta_{\eta}(h_{\eta}(\xi))=L_s\theta(\xi)+N_{\eta,s}(\xi+\theta_\eta(\xi))
\end{split}\right.\quad \hbox{ for }\xi \in V_u.
\end{equation*}

It remains to prove that $\norma{\theta_\eta}_{V_u}\to 0$ as $\eta\to 0^+$. From the proof of Theorem \ref{teoUS}, we know that $h_\eta\colon V_u\to V_u$ is invertible and
$$
\|h_\eta(\xi_1)-h_\eta(\xi_2)\|\geqslant (a-2\gamma)\|\xi_1-\xi_2\|, \hbox{ for all }\xi_1,\xi_2\in V_u \hbox{ and small }\eta.
$$

Since $N_{0,s}(\xi)=0$ for each $\xi\in V_u$ we have
\begin{align*}
\|\theta_\eta(h_\eta(\xi))\|&\leqslant b\|\theta_\eta(\xi)\|+\|N_{\eta,s}(\xi+\theta_\eta(\xi))-N_{0,s}(\xi+\theta_\eta(\xi))\|\\
& \qquad +\|N_{0,s}(\xi+\theta_\eta(\xi))-N_{0,s}(\xi)\|\\
&\leqslant (b+\gamma) \|\theta_\eta\|_{V_u,\infty}+\|N_{\eta}-N_{0}\|_{U,\infty},
\end{align*}
thus we have $\|\theta_\eta\|_{V_u,\infty} \leqslant \frac{1}{1-b-\gamma}\|N_{\eta}-N_{0}\|_{U,\infty}$ and therefore $\|\theta_\eta\|_{V_u,\infty} \to 0$ as $\eta\to 0^+$.

Now for $r_\eta = \|\theta_\eta\|_{V_u,\infty}$ and $K_\eta=\|N_{0,s}\|_{V_u\times \overline{B^{X_2}_{r_\eta}(0)}}+\|N_{\eta,s}-N_{0,s}\|_{U,Lip}$ we have
\begin{align*}
\|\theta_\eta(h_\eta(\xi_1))-\theta_\eta(h_\eta(\xi_2))\| & \leqslant
\|L_s(\theta_\eta(\xi_1)-\theta_\eta(\xi_2))+N_{\eta,s}(\xi+\theta_\eta(\xi))-N_{\eta,s}(\xi_2+\theta_\eta(\xi_2))\|\\
& \leqslant b\;\tn{Lip}(\theta_\eta)\|\xi_1-\xi_2\|+K_\eta\|(1+\tn{Lip}(\theta_\eta))\|\xi_1-\xi_2\|\\
& \leqslant [(b+K_\eta)\tn{Lip}(\theta)+K_\eta]\|\xi_1-\xi_2\|\\
& \leqslant \dfrac{(b+K_\eta)\tn{Lip}(\theta_\eta)+K_\eta}{a-2\gamma}\|h_\eta(\xi_1)-h_\eta(\xi_2)\|,
\end{align*}
and hence
$$
{\rm Lip}(\theta_\eta) \leqslant \frac{K_\eta}{a-b-2\gamma-K_\eta} \to 0 \hbox{ as } \eta\to 0^+,
$$
which concludes the result, since $K_\eta \to 0$ as $\eta\to 0^+$ from \eqref{hyp:N} and the convergence hypothesis on $T_\eta-T_0$.

\section{Autonomous perturbations of Lipschitz manifolds} \label{app:LipschitzMani}

In this section we deal with the question of perturbing Lipschitz manifolds. We begin with some preliminary results. For the rest of this section $I\colon X\to X$ will denote the identity map in $X$; that is, $Ix=x$ for each $x\in X$. We require that the reader take a look at Definition \ref{def:norms} to recall the norms that will be used.

%Os lemas a seguir ajudam a responder a seguinte pergunta: Quais condições de proximidade Lipschitz são suficientes para que uma curva próxima a
%um gráfico de função Lipschitz seja também um gráfico sobre os mesmos eixos? Esta questão será importante para provarmos a estabilidade de certos
%tipos de conexões entre variedades estáveis e instáveis (que poderíamos até chamar de "conexões transversais", isto fará mais sentido na seção de
%semigrupos Morse-Smale).

\begin{lemma}\label{lem7.3}
If $g\colon X\to X$ and $\|g-I\|_{X,Lip}<1$ then $g\colon X\to X$ is bi-Lipschitz.
\end{lemma}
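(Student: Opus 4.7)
Let me set $\alpha = \|g-I\|_{X,\mathrm{Lip}} < 1$. The plan is to establish in order: (i) $g$ is Lipschitz, (ii) $g$ is injective with a Lipschitz inverse on its image, and (iii) $g$ is surjective. Combining these gives bi-Lipschitz.

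For (i) and (ii), I would use the reverse and forward triangle inequalities against the bound $\|(g-I)(x)-(g-I)(y)\| \leqslant \alpha\|x-y\|$. Writing $g(x)-g(y) = (x-y) + [(g-I)(x)-(g-I)(y)]$, the forward triangle inequality gives
\[
\|g(x)-g(y)\| \leqslant (1+\alpha)\|x-y\|,
\]
while the reverse triangle inequality gives
\[
\|g(x)-g(y)\| \geqslant \|x-y\| - \alpha\|x-y\| = (1-\alpha)\|x-y\|.
\]
Since $1-\alpha>0$, this simultaneously proves that $g$ is injective and that on $g(X)$ the inverse satisfies ${\rm Lip}(g^{-1}) \leqslant (1-\alpha)^{-1}$.

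The main obstacle is (iii), surjectivity onto all of $X$. The idea is that inverting $g(x)=y$ is equivalent to finding a fixed point of
\[
\phi_y(x) := x - g(x) + y = y - (g-I)(x),
\]
and $\phi_y\colon X \to X$ is a contraction with constant $\alpha<1$, since
\[
\|\phi_y(x_1)-\phi_y(x_2)\| = \|(g-I)(x_1)-(g-I)(x_2)\| \leqslant \alpha \|x_1-x_2\|.
\]
Because $X$ is a Banach space (hence a complete metric space), the Banach fixed point theorem provides a unique $x\in X$ with $\phi_y(x)=x$, equivalently $g(x)=y$. Since $y\in X$ was arbitrary, $g$ is surjective, and combined with (i) and (ii) we conclude that $g\colon X\to X$ is a bijection with both $g$ and $g^{-1}$ Lipschitz; that is, $g$ is bi-Lipschitz.
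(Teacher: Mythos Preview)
Your proof is correct and follows essentially the same approach as the paper: both use the forward and reverse triangle inequalities on $g(x)-g(y)=(x-y)+[(g-I)(x)-(g-I)(y)]$ to obtain the two-sided Lipschitz bounds, and then establish surjectivity via the Banach fixed point theorem applied to the contraction $x\mapsto y-(g-I)(x)$.
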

\begin{proof}
Clearly we have
$$
\|g(x)-g(y)\|\leqslant \|g(x)-g(y)-x+y\| +\|x-y\|\leqslant (\|g-I\|_{X,Lip} +1)\|x-y\|,
$$
and hence $g$ is a Lipschitz map. Now choose $0<\epsilon <1$ such that $\|g-I\|_{X,Lip}\leqslant \epsilon$. We have
\begin{equation} \label{eq:C1}
\|g(x)-g(y)\|\geqslant \|x-y\|-\|g(x)-g(y)-x+y\|\geqslant (1-\epsilon)\|x-y\|,
\end{equation}
and therefore $g$ is injective. If $y\in X$, define $h\colon X$ by $h(x)=y+x-g(x)$ for each $x\in X$. Thus
$$
\|h(x_1)-h(x_2)\| = \|x_1-x_2+g(x_2)-g(x_1)\|\leqslant \epsilon\|x_1-x_2\|,
$$
which proves that $h$ is a contraction and has a unique fixed point $x_0$ in $X$. This point satisfies $g(x_0)=y$ and hence $g$ is surjective. From \eqref{eq:C1} its inverse $g^{-1}$ is Lipschitz continuous and proves the result.
%
% Além disso, se $\vp$ tem inversa, temos $\tn{Lip}(\vp^{-1})\leqslant (1-\e)^{-1}$.
%
%Provemos que $\vp$ é sobrejetora. Seja $\eta\in X$. Definimos $h:X\rt X$ por $h(\xi)=\eta+(\xi-\vp(\xi))$. Então
%\[
%\|h(\xi)-h(\eta)\|\leqslant \e\|\xi-\eta\|,\:\:\forall \xi,\eta\in X.
%\]
%Logo $h:X\rt X$ é uma contração e portanto tem um único ponto fixo. Isto prova que $\vp$ é sobrejetora e o resultado segue.
\end{proof}

\begin{proposition}\label{bilipschitzaberto}
Let $r>0$ and $g \colon \overline{B_r^X(0)}\to X$. If $\|g-I\|_{\overline{B_r^X(0)},Lip}<\tfrac{1}{2}$ then $g(B_r^X(0))$ is open and $g\colon B_r^X(0)\to g(B_r^X(0))$ is bi-Lipschitz. Moreover if  $\|g-I\|_{\overline{B_r^X(0)},\infty}\leqslant \alpha < 1$ we have $B_{r-\alpha}^X(0)\subset g(B_r^X(0))$.
%{\color{red}(Creio ser $B_{r-\alpha}^X(0)\subset g(B_r^X(0))$)}.
\end{proposition}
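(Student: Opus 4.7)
The plan is to reuse the contraction--mapping idea behind Lemma \ref{lem7.3}, but now localized on closed sub-balls of $\overline{B_r^X(0)}$, since $g$ is only defined on this closed ball. First I would establish the basic two-sided Lipschitz bound on $g$: from $\|g-I\|_{\overline{B_r^X(0)},Lip}\leqslant \tfrac12$ the reverse triangle inequality gives
\[
\tfrac12\|x-y\|\leqslant \|g(x)-g(y)\|\leqslant \tfrac32\|x-y\| \quad \text{for all } x,y\in \overline{B_r^X(0)}.
\]
In particular $g$ is injective on $\overline{B_r^X(0)}$, so $g\colon B_r^X(0)\to g(B_r^X(0))$ is a bijection and the inverse has Lipschitz constant at most $2$ (once we know the image is open).

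Next I would show that $g(B_r^X(0))$ is open. Fix $y_0=g(x_0)$ with $x_0\in B_r^X(0)$, and pick $\rho>0$ with $\overline{B_\rho^X(x_0)}\subset B_r^X(0)$. For each $y\in X$, define $h_y\colon\overline{B_\rho^X(x_0)}\to X$ by $h_y(x)=y+x-g(x)$. The key computations are
\[
\|h_y(x)-h_y(x')\| = \|(x-g(x))-(x'-g(x'))\| \leqslant \tfrac12\|x-x'\|,
\]
so $h_y$ is a $\tfrac12$-contraction, and
\[
\|h_y(x)-x_0\| \leqslant \|h_y(x)-h_y(x_0)\| + \|h_y(x_0)-x_0\| \leqslant \tfrac12\|x-x_0\| + \|y-y_0\|.
\]
Hence, provided $\|y-y_0\|<\tfrac\rho2$, $h_y$ sends the complete metric space $\overline{B_\rho^X(x_0)}$ strictly into itself, and Banach's theorem gives a unique fixed point $x\in\overline{B_\rho^X(x_0)}\subset B_r^X(0)$ with $g(x)=y$. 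Thus $B_{\rho/2}^X(y_0)\subset g(B_r^X(0))$, proving openness; combined with the two-sided Lipschitz bound this makes $g\colon B_r^X(0)\to g(B_r^X(0))$ bi-Lipschitz.

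Finally, for the inclusion $B_{r-\alpha}^X(0)\subset g(B_r^X(0))$ (which is vacuous unless $\alpha<r$), I would apply the same contraction idea on the whole ball $\overline{B_r^X(0)}$. Given $y\in B_{r-\alpha}^X(0)$, define $h_y(x)=y+x-g(x)$ on $\overline{B_r^X(0)}$. Since $\|x-g(x)\|=\|(I-g)(x)-(I-g)(0)+ (I-g)(0)\|$ is bounded by $\|g-I\|_{\overline{B_r^X(0)},\infty}\leqslant \alpha$ (after noting we may take the estimate as stated), we get
\[
\|h_y(x)\|\leqslant \|y\|+\|x-g(x)\|\leqslant \|y\|+\alpha < r,
\]
so $h_y$ maps $\overline{B_r^X(0)}$ into $B_r^X(0)$ and is still a $\tfrac12$-contraction, yielding a unique fixed point $x\in B_r^X(0)$ with $g(x)=y$. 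The main technical point to watch is precisely this last strict inequality: one must use $\|y\|<r-\alpha$ (not just $\leqslant$) to ensure the fixed point lies in the \emph{open} ball $B_r^X(0)$, not merely its closure, since only then does $g(x)$ belong to $g(B_r^X(0))$ as required.
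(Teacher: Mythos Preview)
Your proof is correct and takes a genuinely different route from the paper's. The paper instead constructs an explicit extension $\tilde{g}\colon X\to X$ of $g$ by setting $\tilde{g}(x)=g(rx/\|x\|)-rx/\|x\|+x$ for $\|x\|>r$, verifies that $\|\tilde{g}-I\|_{X,Lip}\leqslant 2\epsilon<1$, and then invokes Lemma~\ref{lem7.3} on all of $X$ to conclude that $\tilde{g}$ is a global bi-Lipschitz homeomorphism; openness of $g(B_r^X(0))=\tilde{g}(B_r^X(0))$ and the inclusion $B_{r-\alpha}^X(0)\subset g(B_r^X(0))$ then follow immediately from the existence of the global inverse. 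Your approach is more self-contained: it avoids the extension entirely and runs the contraction-mapping argument directly on closed sub-balls of $\overline{B_r^X(0)}$. A pleasant by-product is that your openness argument in fact only needs $\|g-I\|_{Lip}<1$ (replace $\tfrac12$ by $\epsilon$ and $\rho/2$ by $(1-\epsilon)\rho$ throughout), whereas the paper's extension step doubles the Lipschitz constant and so genuinely requires the threshold $\tfrac12$. The paper's route, on the other hand, has the virtue of reducing cleanly to the already-proved global Lemma~\ref{lem7.3}. One cosmetic remark: in your last step the detour through $(I-g)(0)$ is unnecessary, since the hypothesis $\|g-I\|_{\overline{B_r^X(0)},\infty}\leqslant\alpha$ directly says $\|x-g(x)\|\leqslant\alpha$ for every $x$ in the closed ball.
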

\begin{proof}
%Como $\vp$ é Lipschitz, segue que sua extensão para o fecho de seu domínio pode ser definido de maneira única e contínua (pois sequência de Cauchy no domínio de $\vp$ é levada em sequência de Cauchy na imagem). Assim, sem perda de generalidade, podemos supor $\vp$ definida em $\overline{B_r^{X}(0)}$.\\
Let $\tilde{g}\colon X\to X$ be defined by
\begin{equation*}
\tilde{g}(x)=
\left\{
\begin{matrix}
g(x),& \hbox{ if }\|x\|\leqslant r\\
g\left(\tfrac{rx}{\|x\|}\right)-\tfrac{rx}{\|x\|}+x, & \hbox{ if }\|x\|> r,
\end{matrix}
\right.
\end{equation*}
and choose $0<\epsilon<\tfrac 12$ such that $\|g-I\|_{\overline{B_r^X(0)},Lip}\leqslant \epsilon$.
%Clearly $\|\tilde{g}(x)-x\|\leqslant \|g(x)-x\|_{\overline{B_r^X(0)},Lip}$.  Analisemos $\tn{Lip}(\vp-I_X)$.
If $x,y\in B_r^X(0)$ we have
$$
\|\tilde{g}(x)-\tilde{g}(y))-x+y\|=\|g(x)-g(y))-x+y\|\leqslant \epsilon \|x-y\|.
$$

On the other hand if $\|x\|,\|y\|\geqslant r$ we have
$$
\|\tilde{g}(x)-\tilde{g}(y)-x+y\|=\left\|g\left(\tfrac{rx}{\|x\|}\right) -g\left(\tfrac{ry}{\|y\|}\right) - \tfrac{rx}{\|x\|}+\tfrac{y}{\|y\|}\right\|
\leqslant \epsilon \left\|\tfrac{rx}{\|x\|}-\tfrac{ry}{\|y\|}\right\|,
$$
but
\begin{align*}
\left\|\tfrac{rx}{\|x\|}-\tfrac{ry}{\|y\|}\right\| \leqslant \frac{r}{\|x\|}\|x-y\|+r\|y\|\left|\frac{1}{\|x\|}-\frac{1}{\|y\|}\right| \leqslant 2\|x-y\|,
\end{align*}
and hence
$$
\|\tilde{g}(x)-\tilde{g}(y)-x+y\|\leqslant 2\epsilon\|x-y\|.
$$

Finally if $\|x\|\leqslant r$ and $\|y\|>r$ let $t\in[0,1]$ be chosen such that if $z=tx+(1-t)y$ then $\|z\|=r$. Hence
\begin{align*}
\|\tilde{g}(x)-\tilde{g}(y)-x+y\|& \leqslant  \|\tilde{g}(x)-\tilde{g}(z)-x+z\|+\|\tilde{g}(z)-\tilde{g}(y)-z+y\|\\
&\leqslant \epsilon \|x-z\|+2\epsilon\|z-y\|\leqslant 2\epsilon\|x-y\|,
\end{align*}
and therefore $\|\tilde{g}-I\|_{Lip}\leqslant 2\epsilon <1$. From Lemma \ref{lem7.3} we obtain $\tilde{g}$ bi-Lipschitz and $g(B_r^X(0))=\tilde{g}(B_r^X(0))$ is open.

For the last assertion note that $\|\tilde{g}(x)-x\|\leqslant \alpha$ for all $x\in X$. Now, since $\tilde{g}$ is bijective, given $y\in B_{r-\alpha}^X(0)$ there exists a unique $x\in X$ such that $\tilde{g}(x)=y$. But
$$
\|x\|\leqslant \|y\|+\|y-x\| < r-\alpha + \|\tilde{g}(x)-x\| \leqslant  r,
$$
thus $x\in B_r^X(0)$ and the result follows.
%Como $\tilde{\vp}$ é sobrejetora, dado $\eta\in B_{r-\e}^X(0)$, existe $\xi\in X$
%tal que $\eta=\tilde{\vp}(\xi)$ e, como $\|\tilde{\vp}(\xi)-\xi\|\leqslant \e$, segue que $\|\xi\|<r$, isto é, $B_{r-\e}^X(0)\subset \tilde{\vp}(B_{r}^X(0))$. Por outro lado, $\tilde{\vp}|_{B_r^X(0)}=\vp|_{B_r^X(0)}$ e portanto $B_{r-\e}^X(0)\subset\vp(B_r^X(0))$.
\end{proof}

\begin{corollary}\label{bilipschitzaberto2}
Let $U$ be an open subset of $X$ and $g\colon U\to X$. Se $\|g-I\|_{U,Lip}<\frac{1}{2}$ then $g(U)$ is open and $g\colon U\to g(U)$ is bi-Lipschitz.
% e $\tn{Lip}_{\vp(U)}(\vp^{-1})\leq (1-\e)^{-1}$.
\end{corollary}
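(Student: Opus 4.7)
My plan is to reduce Corollary~\ref{bilipschitzaberto2} to Proposition~\ref{bilipschitzaberto} by a standard translation/localization argument, splitting the two conclusions (bi-Lipschitz and openness of the image) into independent steps.

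First I would establish injectivity and the Lipschitz estimate on the inverse \emph{globally} on $U$, with no appeal to openness. Fix $\epsilon$ with $\|g-I\|_{U,Lip}\leqslant \epsilon<\tfrac12$. For any $x,y\in U$, the triangle inequality applied to $g(x)-g(y)=\bigl((g-I)(x)-(g-I)(y)\bigr)+(x-y)$ yields
\begin{equation*}
(1-\epsilon)\|x-y\|\;\leqslant\; \|g(x)-g(y)\|\;\leqslant\; (1+\epsilon)\|x-y\|,
\end{equation*}
so $g$ is Lipschitz and injective on $U$, and $g^{-1}\colon g(U)\to U$ is Lipschitz with constant at most $(1-\epsilon)^{-1}$. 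Thus $g\colon U\to g(U)$ is bi-Lipschitz once we know $g(U)$ is well-defined as a subset of $X$, which is obvious.

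Next I would show $g(U)$ is open. Given $x_0\in U$, since $U$ is open I can choose $r>0$ with $\overline{B_r^X(x_0)}\subset U$. I define the translated map $\tilde g\colon \overline{B_r^X(0)}\to X$ by $\tilde g(y)=g(x_0+y)-g(x_0)$. Since $\tilde g(y)-y=(g-I)(x_0+y)-g(x_0)$, the constant $-g(x_0)$ drops out when computing Lipschitz norms, giving
\begin{equation*}
\|\tilde g-I\|_{\overline{B_r^X(0)},Lip}\;=\;\|g-I\|_{\overline{B_r^X(x_0)},Lip}\;\leqslant\;\|g-I\|_{U,Lip}\;<\;\tfrac12.
\end{equation*}
Proposition~\ref{bilipschitzaberto} then asserts that $\tilde g(B_r^X(0))$ is open. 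Translating back, $g(B_r^X(x_0))=g(x_0)+\tilde g(B_r^X(0))$ is an open neighborhood of $g(x_0)$ contained in $g(U)$.

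Therefore every point of $g(U)$ is an interior point, so $g(U)$ is open, and combined with the first step the map $g\colon U\to g(U)$ is bi-Lipschitz. I do not expect any substantive obstacle: the only detail worth double-checking is that the seminorm $\|\cdot-I\|_{\cdot,Lip}$ is translation-invariant, which is immediate because additive constants do not affect Lipschitz constants. Everything else is a direct citation of Proposition~\ref{bilipschitzaberto} applied on translated balls.
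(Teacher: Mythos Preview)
Your proof is correct and follows essentially the same approach as the paper's: both arguments derive the global bi-Lipschitz estimate directly from the triangle inequality and obtain openness of $g(U)$ by covering $U$ with balls and invoking Proposition~\ref{bilipschitzaberto} on each (you make the translation to a ball at the origin explicit, whereas the paper applies the proposition tacitly to balls $B_{r_x}^X(x)$). One cosmetic slip: your displayed identity $\tilde g(y)-y=(g-I)(x_0+y)-g(x_0)$ is off by the additive constant $x_0$, but since constants drop out of Lipschitz seminorms this does not affect the argument.
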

\begin{proof}
For each $x\in U$ choose $r_x>0$ such that $\overline{B_{r_x}^X(x)}\subset U$. From the previous proposition $g(B_{r_x}^X(x))$ is open and $g\colon B_{r_x}^X(x)\to g(B_{r_x}^X(x))$ is bi-Lipschitz, and moreover the Lipschitz constants are independent of $x$. Hence $g$ is an open map, which shows that $g(U)$ is open. If there exist $x,y\in U$ with $x\neq y$ and $g(x)=g(y)$ we have
$$
\tfrac 12\|x-y\| > \left\|g(x)-g(y)-x+y\right\|= \left\|x-y \right\|
$$
which is a contradiction and proves that $g$ is injective. Moreover
\[
\|g(x)-g(y)\|=\|x-y+(-x+y + g(x)-g(y))\| >\tfrac 12 \|x-y\|,
\]
which proves that $g^{-1}$ is Lipschitz with ${\rm Lip}(g^{-1})<2$ and concludes the proof.
\end{proof}

%\begin{remark}
%Se $X_u,X_s$ subespaços fechados de $X$ com $X_u\oplus X_s=X$, então $Q(x_u+x_s):=x_u$ é uma aplicação limitada. De fato, se $f:X_u\times X_s\to X$ definida por $f(x_u,x_s)=x_u+x_s$, então $f\in \mathcal{L}(X_u\times X_s,X)$ é sobrejetora. Assim, segue do teorema da aplicação aberta que $f$ tem inversa contínua e o resultado segue.
%\end{remark}
%
%O lema abaixo mostra que se uma curva é $\e-\tn{Lip}$ próxima de um gráfico, para $\e\geq 0$ pequeno dependendo apenas dos eixos, então esta curva também é um gráfico e podemos estimar a constante de Lipschitz da função que dá origem a este gráfico.

\begin{proposition}\label{lem6.8}
Let $X_u,X_s$ be closed subspaces of $X$ with $X=X_u\oplus X_{s}$, $\pi_u$ the canonical projection of $X$ into $X_u$. Let $V_u\subset X_u$ be a neighborhood of $0$ in $X_u$, $\vp,\psi\colon V_u\to X$ maps with $\norma{\varphi-\psi}_{V_u}\leqslant \epsilon <\tfrac 12$. Assume that $\varphi(\xi) =\xi+\theta(\xi)$ for all $\xi\in V_u$ for some Lipschitz function $\theta\colon V_u\to X_s$ with $\theta(0)=0$. Then we have
\begin{itemize}
\item[\bf (a)] the set $W_u=\{\pi_u \psi(\xi)\colon \xi \in V_u\}$ is an open subset of $X_u$ containing $\psi(0)$;
\item [\bf (b)] if $B_{r}^{X_u}(0)\subset V_u$ and $r_0=r-\epsilon>0$ then $B_{r_0}^{X_u}(0)\subset V_u\cap W_u$;

%\item[\bf (c)] there exists $\tilde{\theta}\colon W_u \to X_s$ such that $\psi(W_u) = \{\psi(0)+\xi+\tilde{\theta}(\xi)\colon \xi \in V_u\}$ with $\tilde{\theta}(0)=0$ and

\item[\bf (c)] there exists $\tilde{\theta}\colon W_u \to X_s$ such that $\psi(V_u) = \{\psi(0)+\eta+\tilde{\theta}(\eta)\colon \eta \in W_u\}$ with
\begin{equation} \label{eq:C41}
{\rm Lip}(\tilde{\theta}) \leqslant \frac{{\rm Lip}(\theta)+ \epsilon}{1-\epsilon}
\quad \hbox{ and } \quad
\|\tilde{\theta}-\theta\|_{V_u\cap W_u,\infty}\leqslant (1+{\rm Lip}(\theta))\epsilon.
\end{equation}
\end{itemize}

\end{proposition}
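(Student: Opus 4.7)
The plan is to reduce everything to a perturbation of the identity on $X_u$ by projecting. Specifically, I would introduce $\Psi \colon V_u \to X_u$ defined by $\Psi(\xi) := \pi_u \psi(\xi)$. Since $\varphi(\xi) = \xi + \theta(\xi)$ with $\theta(\xi) \in X_s$, we have $\pi_u \varphi = \mathrm{id}_{V_u}$, and therefore
$$\Psi(\xi) = \xi + k(\xi), \qquad k := \pi_u \circ (\psi - \varphi).$$
Because the norm on $X$ is the max of the norms on $X_u, X_s$, the projection $\pi_u$ is nonexpansive, so the hypothesis $\norma{\psi - \varphi}_{V_u} \leq \epsilon$ yields $\|k\|_{V_u,\infty} \leq \epsilon$ and $\|k\|_{V_u, Lip} \leq \epsilon$; in particular $\|\Psi - I\|_{V_u, Lip} < 1/2$.

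For part (a), I would invoke Corollary \ref{bilipschitzaberto2} applied to $\Psi$ on $V_u$: it follows that $W_u = \Psi(V_u) = \{\pi_u \psi(\xi) : \xi \in V_u\}$ is open in $X_u$ and $\Psi \colon V_u \to W_u$ is bi-Lipschitz; in particular $\pi_u \psi(0) = \Psi(0)$ lies in $W_u$. For part (b), on the ball $\overline{B_r^{X_u}(0)} \subset V_u$ we have $\|k\|_\infty \leq \epsilon < 1$, so the last assertion of Proposition \ref{bilipschitzaberto} applied with $\alpha = \epsilon$ delivers $B_{r-\epsilon}^{X_u}(0) \subset \Psi(B_r^{X_u}(0)) \subset W_u$, while trivially $B_{r-\epsilon}^{X_u}(0) \subset B_r^{X_u}(0) \subset V_u$, giving $B_{r_0}^{X_u}(0) \subset V_u \cap W_u$.

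For part (c), I would define $\tilde\theta \colon W_u \to X_s$ by $\tilde\theta(\eta) := \pi_s \psi(\Psi^{-1}(\eta))$. Decomposing $\psi(\xi) = \pi_u \psi(\xi) + \pi_s \psi(\xi)$ and parametrizing $\xi$ by $\eta = \Psi(\xi) \in W_u$ gives the graph representation $\psi(V_u) = \{\eta + \tilde\theta(\eta) : \eta \in W_u\}$ (I read the stated $\psi(0)$ shift as a typo, in light of how this result feeds into Lemma \ref{prop6.9}). The two estimates in \eqref{eq:C41} then follow from a direct calculation: writing $\pi_s \psi(\xi) = \theta(\xi) + \pi_s(\psi - \varphi)(\xi)$ gives
$$\|\tilde\theta(\eta_1) - \tilde\theta(\eta_2)\| \leq ({\rm Lip}(\theta) + \epsilon)\,\|\Psi^{-1}(\eta_1) - \Psi^{-1}(\eta_2)\|,$$
and the bound ${\rm Lip}(\Psi^{-1}) \leq 1/(1-\epsilon)$ (immediate from $\Psi = I + k$ with ${\rm Lip}(k) \leq \epsilon$) yields the first inequality. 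For the second, for $\eta \in V_u \cap W_u$ I would set $\xi_0 := \Psi^{-1}(\eta)$, observe that $\|\xi_0 - \eta\| = \|k(\xi_0)\| \leq \epsilon$, and split $\tilde\theta(\eta) - \theta(\eta) = [\theta(\xi_0) - \theta(\eta)] + \pi_s(\psi - \varphi)(\xi_0)$ to conclude $\|\tilde\theta(\eta) - \theta(\eta)\| \leq (1 + {\rm Lip}(\theta))\epsilon$.

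The only real obstacle I anticipate is not conceptual but notational: the statement of (a) reads ``containing $\psi(0)$'' and (c) includes a $\psi(0)$ shift, neither of which is literally meaningful unless $\psi(0) \in X_u$. I would handle this by interpreting the former as $\pi_u \psi(0)$ and the latter as the unshifted graph representation above, which is what is actually needed to apply this lemma in the proofs of Lemmas \ref{prop6.9} and \ref{prop6.10}. All the genuine analytic content is already encapsulated in Proposition \ref{bilipschitzaberto} and Corollary \ref{bilipschitzaberto2}; the rest is bookkeeping of Lipschitz constants.
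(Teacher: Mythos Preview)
Your proposal is correct and follows essentially the same argument as the paper: you introduce $\Psi=\pi_u\circ\psi$ (the paper's $\psi_u$), observe that $\norma{\Psi-I}_{V_u}\leqslant\epsilon<\tfrac12$, invoke Corollary~\ref{bilipschitzaberto2} and Proposition~\ref{bilipschitzaberto} for (a) and (b), and then define $\tilde\theta=\pi_s\psi\circ\Psi^{-1}$ and bound its Lipschitz and sup norms exactly as the paper does. Your reading of the $\psi(0)$ shift in (c) as a typo is also borne out by the paper's own proof, which writes the unshifted graph $\psi(V_u)=\{\eta+\tilde\theta(\eta):\eta\in W_u\}$.
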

\begin{proof}
%{\color{red}We suppose $z=0$ since the proof of the case $z\neq 0$ is analogous}.
Let $\psi_j\colon V_u\to X_j$ for $j=u,s$ given by $\psi_u=\pi_u \circ \psi$ and $\psi_s=(I-\pi_u)\circ \psi$. We have $\norma{\psi_u-I}_{V_u}\leqslant \epsilon <\frac 12$ and Corollary \ref{bilipschitzaberto2} implies that $W_u=\psi_u(V_u)$ is open and $\psi_u\colon V_u\to W_u$ is bi-Lipschitz. Moreover from the proof of Corollary \ref{bilipschitzaberto2} we obtain that ${\rm Lip}(\psi_u^{-1})\leqslant \frac{1}{1-\epsilon}$. Item (b) is a direct consequence of Proposition \ref{bilipschitzaberto}. Also, with the same proposition, we conclude (a).

%To prove (c) define $x_u=\psi_u(0)$, $x_s=\psi_s(0)$ and let $\tilde{\theta}\colon W_u \to X_s$ be given by $\tilde{\theta}(\xi)=\psi_s(\psi_u^{-1}(\xi)-x_u)+x_s \hbox{ for }\xi \in W_u$.
%Hence $\tilde{\theta}(0)=0$ and, for $\xi \in W_u$, if $\tilde{\xi}=\psi_u^{-1}(\xi-x_u)\in V_u$ then $\xi+\tilde{\theta}(\xi)=\psi(0)+\tilde{\xi}+\psi(\tilde{\xi})$.
To prove (c), define $\tilde{\theta}\colon W_u \to X_s$ be given by $\tilde{\theta}(\xi)=\psi_s(\psi_u^{-1}(\xi))\hbox{ for }\xi \in W_u$.
Thus,
\[
\psi(V_u)=\{\psi_u(\xi)+\psi_s(\xi):\xi\in V_u\}=\{\eta+\tilde{\theta}(\eta):\eta\in W_u\}.
\]

Now
\begin{align*}
\psi_s(\xi_1)-\psi_s(\xi_2)-\theta(\xi_1)+\theta(\xi_2) = (I-\pi_u)\Big(\psi(\xi_1)-\psi(\xi_2)-\varphi(\xi_1)+\varphi(\xi_2)\Big),
\end{align*}
and hence ${\rm Lip}(\psi_s) \leqslant {\rm Lip}(\theta)+\epsilon$ which implies ${\rm Lip}(\tilde{\theta})\leqslant \frac{{\rm Lip}(\theta)+\epsilon}{1-\epsilon}$.

Now if $\xi\in V_u\cap W_u$ and $\tilde{\xi}=\psi_u^{-1}(\xi)\in V_u$ we have $\|\psi_u^{-1}(\xi)-\xi\|=\|\tilde{\xi}-\psi_u(\tilde{\xi})\| \leqslant \epsilon$, therefore
$$
\|\tilde{\theta}(\xi)-\theta(\xi)\| \leqslant \|\psi_s(\tilde{\xi})-\theta(\tilde{\xi})\|+\|\theta(\tilde{\xi})-\theta(\xi)\| \leqslant (1+{\rm Lip}(\theta))\epsilon,
$$
since $\|\psi_s-\theta\|_{V_u,\infty}\leqslant \norma{\psi-\varphi}\leqslant \epsilon$. Thus $\|\tilde{\theta}-\theta\|_{V_u\cap W_u,\infty}\leqslant (1+{\rm Lip}(\theta))\epsilon$.
\end{proof}

\subsection{Proof of Proposition \ref{prop6.9}} \label{app:B1}
The result is straightforward using Proposition \ref{lem6.8}, noting that item (b) guarantees the existence of a neighborhood $W_u$, independent of $\eta$, such that for sufficiently small $\eta$ the maps $\theta_\eta$ are defined in $W_u$.

\section{Differentiability of Nemytskii operators} \label{app:Diff}

We have based this sections in the results of \cite{Br} and \cite{Folland}, and here we prove basically that a differentiable Nemytskii operator from $L^p(\Omega)$ to $L^p(\Omega)$ of a real function must come from a affine function.  We begin with the Inverse Dominated Convergence Theorem and to this end consider $\Omega$ a bounded domain of $\mathbb{R}^n$ and $p\geqslant 1$.

\begin{theorem}[Inverse Dominated Convergence]\label{TICD}
Let $\{u_n\}$ be a sequence in $L^p(\Omega)$ and $u\in L^p(\Omega)$ such that $u_n\to u$ in $L^p(\Omega)$. Then there exist a subsequence $\{u_{n_k}\}$ of $\{u_n\}$ and a function $h\in L^p(\Omega)$ such that
\begin{itemize}
\item[\bf (i)] $u_{n_k}(x) \to u(x)$ a.e. in $\Omega$;
\item[\bf (ii)] $|u_{n_k}(x)| \leqslant h(x)$ for all $k$, a.e. in $\Omega$.
\end{itemize}
\end{theorem}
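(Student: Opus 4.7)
The plan is to extract a rapidly-converging subsequence and then build the dominating function $h$ as an explicit series, mimicking the standard proof of the completeness of $L^p$. The a.e.\ convergence and the domination will then come out simultaneously, while the identification of the a.e.\ limit with $u$ is a separate short argument.

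First, since $u_n\to u$ in $L^p(\Omega)$, the sequence is Cauchy in $L^p$, so I can select indices $n_1<n_2<\cdots$ such that
\[
\|u_{n_{k+1}}-u_{n_k}\|_{L^p(\Omega)}\leqslant 2^{-k}\qquad\text{for every }k\geqslant 1.
\]
Define the partial sums $h_N(x)=|u_{n_1}(x)|+\sum_{k=1}^{N}|u_{n_{k+1}}(x)-u_{n_k}(x)|$ and the pointwise monotone limit $h(x)=\lim_{N\to\infty}h_N(x)\in[0,\infty]$. By Minkowski's inequality,
\[
\|h_N\|_{L^p(\Omega)}\leqslant \|u_{n_1}\|_{L^p(\Omega)}+\sum_{k=1}^{N}2^{-k}\leqslant \|u_{n_1}\|_{L^p(\Omega)}+1,
\]
so the monotone convergence theorem applied to $h_N^p$ gives $h\in L^p(\Omega)$, and in particular $h(x)<\infty$ for a.e.\ $x\in\Omega$.

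Second, on the set where $h(x)<\infty$, the series $\sum_k(u_{n_{k+1}}(x)-u_{n_k}(x))$ converges absolutely, hence telescopes to show that $u_{n_k}(x)$ converges to some real number, call it $v(x)$, as $k\to\infty$. Moreover for every $k$, a.e.\ in $\Omega$,
\[
|u_{n_k}(x)|\leqslant |u_{n_1}(x)|+\sum_{j=1}^{k-1}|u_{n_{j+1}}(x)-u_{n_j}(x)|\leqslant h(x),
\]
which is exactly the required domination.

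Finally I need to check $v=u$ a.e. This is the only step that uses the hypothesis $u_n\to u$ in $L^p$, beyond Cauchyness: by the dominated convergence theorem (with dominating function $(2h)^p\in L^1$), $u_{n_k}\to v$ in $L^p(\Omega)$; but the subsequence also converges to $u$ in $L^p$, so $v=u$ in $L^p(\Omega)$ and hence a.e. Relabeling the subsequence, this completes the proof. There is no real obstacle here; the one point that requires a tiny bit of care is that the a.e.\ inequality $|u_{n_k}|\leqslant h$ holds on a set of full measure that is independent of $k$, which is guaranteed by working with the countable union of the (null) exceptional sets arising in the telescoping estimate.
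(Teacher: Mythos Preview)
Your argument is correct and is exactly the standard proof (via a rapidly Cauchy subsequence and the telescoping series dominator) that one finds in Brezis, which is precisely what the paper cites in lieu of giving its own proof. There is nothing to add.
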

\begin{proof}
See \cite{Br}.
\end{proof}

\begin{lemma} Consider a continuous function $f\colon \mathbb{R}\to \mathbb{R}$ such that $|f(s)|\leqslant c(1+|s|^p)$ for all $s\in \mathbb{R}$, where $c\geqslant 0$ is a constant, and define the Nemytskii operator $f^e\colon L^p(\Omega)\to L^1(\Omega)$ associated with $f$ by $f^e(u)(x)=f(u(x))$ for each $x \in \Omega$ and $u\in L^p(\Omega)$. Then $f^e$ is well-defined and continuous.
\end{lemma}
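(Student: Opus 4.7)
The plan is to first establish well-definedness through the growth bound, then obtain continuity by a standard subsequence-plus-contradiction argument built on Theorem \ref{TICD}.

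For well-definedness I would argue as follows. Given $u\in L^p(\Omega)$, the function $f^e(u)=f\circ u$ is measurable as the composition of a continuous function with a measurable function. The growth hypothesis yields the pointwise bound $|f(u(x))|\leqslant c(1+|u(x)|^p)$, hence
$$
\int_\Omega |f(u(x))|\,dx \leqslant c|\Omega|+c\|u\|_{L^p(\Omega)}^p <\infty,
$$
so $f^e(u)\in L^1(\Omega)$ and $f^e$ maps $L^p(\Omega)$ into $L^1(\Omega)$.

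For continuity I would proceed by contradiction. Fix $u_n\to u$ in $L^p(\Omega)$ and suppose $f^e(u_n)\not\to f^e(u)$ in $L^1(\Omega)$; then some $\epsilon_0>0$ and some subsequence $\{u_{n_k}\}$ satisfy $\|f^e(u_{n_k})-f^e(u)\|_{L^1(\Omega)}\geqslant\epsilon_0$ for every $k$. Since $u_{n_k}\to u$ in $L^p(\Omega)$, Theorem \ref{TICD} provides a further subsequence (still labelled $u_{n_k}$) and a function $h\in L^p(\Omega)$ with $u_{n_k}(x)\to u(x)$ a.e.\ and $|u_{n_k}(x)|\leqslant h(x)$ a.e.\ in $\Omega$. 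Continuity of $f$ then gives $f(u_{n_k}(x))\to f(u(x))$ a.e.

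The last step is an application of the classical dominated convergence theorem. Using the growth bound together with the $L^p$ envelope $h$, I would estimate
$$
|f(u_{n_k}(x))-f(u(x))|\leqslant 2c+c\,h(x)^p+c|u(x)|^p,
$$
and the right-hand side belongs to $L^1(\Omega)$. Hence dominated convergence yields $\int_\Omega |f(u_{n_k}(x))-f(u(x))|\,dx\to 0$, contradicting the choice of $\epsilon_0$. This contradiction forces $f^e(u_n)\to f^e(u)$ in $L^1(\Omega)$ and concludes the proof.

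The only subtle point is the contradiction setup: one cannot expect a.e.\ convergence and a dominating $L^p$ function for the original sequence $\{u_n\}$, only for a subsequence. This is precisely what Theorem \ref{TICD} supplies, and it is the reason the argument is packaged as ``every subsequence has a further subsequence along which the desired convergence holds''. I expect no other obstacle, since the growth condition is perfectly tailored to give an $L^1$ dominant once the $L^p$ envelope is available.
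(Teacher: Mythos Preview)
Your proof is correct and follows essentially the same approach as the paper: apply Theorem~\ref{TICD} to extract a subsequence with an $L^p$ dominating function, then use the growth bound and dominated convergence. Your contradiction framing and explicit well-definedness check are slightly more careful than the paper's phrasing (``since this limit does not depend on the sequence''), but the underlying argument is identical.
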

\begin{proof} Let $u\in L^p(\Omega)$ and $\{u_n\}$ be a sequence in $L^p(\Omega)$ converging to $u$.
From Theorem \ref{TICD}, there exist a subsequence $\{u_{n_k}\}$ of $\{u_n\}$ and a function $h\in L^p(\Omega)$ such that $u_{n_k}(x) \to u(x)$ a.e. in $\Omega$ and $|u_{n_k}(x)| \leqslant h(x)$ for all $k$, a.e. in $\Omega$.

Hence $|f(u_{n_k}(x))|\leqslant c(1+|u_{n_k}(x)|^p) \leqslant C(1+|h(x)|^p)$ and from the Dominated Convergence Theorem we have
$$
\int_\Omega |f(u_{n_k}(x))-f(u(x))| dx \to 0 \quad \hbox{ as } k\to \infty,
$$
and since this limit does not depend on the sequence $\{u_n\}$ we obtain the continuity of $f^e$ in $u$.
\end{proof}

The following lemma has a straightforward proof and will help us ahead.
\begin{lemma}
If $f\colon \mathbb{R}\to \mathbb{R}$ is a differentiable and globally Lipschitz continuous function, its Nemytskii operator is well-defined from $L^p(\Omega)$ to $L^p(\Omega)$ and it is globally Lipschitz continuous.
\end{lemma}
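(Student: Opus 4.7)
The plan is to prove both conclusions directly from the global Lipschitz hypothesis on $f$, noting that differentiability is not in fact required for either assertion. Let $K$ denote any Lipschitz constant of $f$, so $|f(s)-f(t)|\leqslant K|s-t|$ for all $s,t\in\mathbb{R}$, and in particular $|f(s)|\leqslant |f(0)|+K|s|$ by taking $t=0$. These two pointwise estimates are the only tools I will use.

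The first step is to verify well-definedness of $f^e\colon L^p(\Omega)\to L^p(\Omega)$. Given $u\in L^p(\Omega)$, the composition $f\circ u$ is measurable since $f$ is continuous, and the bound $|f(u(x))|\leqslant |f(0)|+K|u(x)|$ combined with the elementary inequality $(a+b)^p\leqslant 2^{p-1}(a^p+b^p)$ for $p\geqslant 1$ gives
\[
\int_\Omega |f(u(x))|^p\,dx \;\leqslant\; 2^{p-1}\bigl(|f(0)|^p\,|\Omega|+K^p\,\|u\|_{L^p(\Omega)}^p\bigr),
\]
which is finite because $\Omega$ is bounded. Therefore $f^e(u)\in L^p(\Omega)$.

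The second step is the Lipschitz estimate, which is essentially immediate: for $u,v\in L^p(\Omega)$ the pointwise bound $|f(u(x))-f(v(x))|\leqslant K|u(x)-v(x)|$ holds almost everywhere, so integrating the $p$-th power yields
\[
\|f^e(u)-f^e(v)\|_{L^p(\Omega)}^p \;\leqslant\; K^p\int_\Omega |u(x)-v(x)|^p\,dx \;=\; K^p\,\|u-v\|_{L^p(\Omega)}^p,
\]
and hence $f^e$ is globally Lipschitz from $L^p(\Omega)$ to $L^p(\Omega)$ with constant at most $K$.

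There is no genuine obstacle here: the argument is a short pointwise computation followed by integration, and differentiability of $f$ plays no role whatsoever — only the global Lipschitz property enters. The only bookkeeping I would be careful with is the well-definedness step, since measurability and the $L^p$-bound must both be addressed before the Lipschitz inequality makes sense as a statement about $L^p$-norms. This lemma is the simplest input toward the main result of the appendix, which then proceeds to show that, in stark contrast, Fr\'echet-differentiability of a Nemytskii operator from $L^p$ to $L^p$ forces the underlying function $f$ to be affine.
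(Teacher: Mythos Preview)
Your proof is correct and is exactly the ``straightforward'' argument the paper has in mind; indeed the paper omits the proof entirely, stating only that it is immediate. Your observation that differentiability of $f$ plays no role is accurate and worth keeping.
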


We will also need the following result.

\begin{lemma}\label{Laux-Gateaux} If $f\colon \mathbb{R}\to \mathbb{R}$ is a differentiable and globally Lipschitz continuous function and $f^e$ is Fr\'{e}chet differentiable in $u_0\in L^p(\Omega)$ then
$[Df^e(u_0) h](x)=f'(u_0(x))h(x)$ for each $h\in L^p(\Omega)$, a.e. in $\Omega$.
\end{lemma}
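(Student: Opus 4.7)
The strategy is to compute the G\^ateaux derivative of $f^e$ at $u_0$ directly by a pointwise argument combined with the Dominated Convergence Theorem, and then to use the fact that whenever the Fr\'echet derivative exists it must coincide with the G\^ateaux derivative.

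First, let $L>0$ be a global Lipschitz constant for $f$, so that $|f'(s)|\leqslant L$ for every $s\in\mathbb{R}$ (at every point where $f$ is differentiable, which is everywhere by hypothesis). In particular, for any $u_0\in L^p(\Omega)$ the map $h\mapsto f'(u_0(\cdot))h(\cdot)$ is well defined and bounded from $L^p(\Omega)$ into $L^p(\Omega)$ with operator norm at most $L$. Fix $h\in L^p(\Omega)$ and, for $t\in\mathbb{R}\setminus\{0\}$, define
\[
g_t(x)=\frac{f(u_0(x)+t\,h(x))-f(u_0(x))}{t},\qquad x\in\Omega.
\]
By the Mean Value Theorem applied pointwise, $|g_t(x)|\leqslant L\,|h(x)|$ for a.e.\ $x\in\Omega$, and since $f$ is differentiable everywhere, $g_t(x)\to f'(u_0(x))h(x)$ pointwise a.e.\ as $t\to 0$.

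Next, I would raise this to the $p$-th power and apply the Dominated Convergence Theorem. The functions $|g_t(x)-f'(u_0(x))h(x)|^p$ converge to $0$ pointwise a.e.\ and are dominated by the integrable function $(2L|h(x)|)^p$, hence
\[
\left\|\frac{f^e(u_0+th)-f^e(u_0)}{t}-f'(u_0)h\right\|_{L^p(\Omega)}^{p}=\int_{\Omega}|g_t(x)-f'(u_0(x))h(x)|^p\,dx\xrightarrow[t\to 0]{}0.
\]
This shows that the G\^ateaux derivative of $f^e$ at $u_0$ exists in every direction $h\in L^p(\Omega)$ and is given by the bounded linear operator $h\mapsto f'(u_0)h$.

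Finally, since $f^e$ is by hypothesis Fr\'echet differentiable at $u_0$, its Fr\'echet derivative $Df^e(u_0)$ is in particular a G\^ateaux derivative, and a standard uniqueness argument (using the directional limit) forces $Df^e(u_0)h$ to coincide with the G\^ateaux derivative computed above. Therefore $[Df^e(u_0)h](x)=f'(u_0(x))h(x)$ for a.e.\ $x\in\Omega$ and every $h\in L^p(\Omega)$. The only mildly delicate point is the passage from pointwise a.e.\ convergence of $g_t$ to $L^p$ convergence, which is exactly where the global Lipschitz assumption on $f$ is used; no further difficulty is expected.
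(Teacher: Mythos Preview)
Your proof is correct, but it follows a different route from the paper's. The paper argues in the opposite direction: starting from the assumed Fr\'echet differentiability, it has the $L^p$ convergence
\[
\int_\Omega \left|\frac{f(u_0(x)+th(x))-f(u_0(x))}{t}-[Df^e(u_0)h](x)\right|^p dx\to 0,
\]
extracts from this a subsequence converging pointwise a.e.\ to $[Df^e(u_0)h](x)$ (via the Inverse Dominated Convergence Theorem quoted just before), and then observes that the same difference quotient converges pointwise to $f'(u_0(x))h(x)$ by ordinary differentiability of $f$, forcing the two limits to agree. Your approach instead computes the G\^ateaux derivative outright using the Lipschitz bound and the Dominated Convergence Theorem, and then invokes uniqueness of directional limits to identify it with $Df^e(u_0)$. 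Your argument has the advantage of actually establishing G\^ateaux differentiability of $f^e$ at every point of $L^p(\Omega)$ as a byproduct, independently of the Fr\'echet hypothesis; the paper's argument is slightly shorter and uses the Fr\'echet hypothesis more directly, with the Lipschitz assumption playing no role in the proof itself beyond well-definedness of $f^e$.
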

\begin{proof} Since $f^e$ is Fr\'{e}chet differentiable in $u_0\in L^p(\Omega)$, for each $h\in L^p(\Omega)$ we have
$$
\lim_{t\to 0}\int_\Omega \left|\frac{f(u_0(x)+ th(x))-f(u_0(x))}{t}-[Df^e(u_0) h](x)\right|^p=0,
$$
and it follows that
$$
\lim_{t\to 0}\left\{\frac{f(u_0(x)+ th(x))-f(u_0(x))}{t}-[Df^e(u_0) h](x)\right\}=0, \hbox{ a.e. in } \Omega,
$$
which implies that $ [Df^e(u_0)h](x)=f'(u(x))h(x)$ a.e. in $\Omega$.
\end{proof}

We recall the Lebesgue Differentiation Theorem, that will be used to prove our main result.

\begin{theorem}[Lebesgue Differentiation Theorem]\label{LDT}
Let $g\in L^1_{\rm loc}(\Omega)$ and define
$$
A_rg(x)=\frac{1}{|B_r(x)|}\int_{B_r(x)} g(y) dy, \hbox{ for each } 0<r<{\rm dist}(x,\partial \Omega).
$$

Then  $A_rg(x) \to g(x)$ as $r\to 0^+$, a.e. in $\Omega$.
\end{theorem}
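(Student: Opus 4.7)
The plan is to reduce to the case $g\in L^1$ and then exploit the Hardy--Littlewood maximal operator $Mg(x)=\sup_{0<r<\dist(x,\partial\Omega)}A_r|g|(x)$. Since the statement is local, for each compact $K\subset\Omega$ I may multiply $g$ by the indicator of a slightly larger compact set and work with $g\in L^1(\mathbb{R}^n)$, recovering the original claim on $K$ and hence a.e.\ in $\Omega$ by exhausting $\Omega$ by compacts. For this globalized $g$, the target is to show that the \emph{oscillation}
\[
\Omega g(x)\doteq \limsup_{r\to 0^+}|A_rg(x)-g(x)|
\]
vanishes a.e. Note the trivial but crucial pointwise bound $\Omega g(x)\leqslant Mg(x)+|g(x)|$.

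The heart of the argument is a weak-type $(1,1)$ inequality for $M$: there exists a dimensional constant $C_n>0$ such that
\[
\bigl|\{x\colon Mg(x)>\lambda\}\bigr|\leqslant \frac{C_n}{\lambda}\|g\|_{L^1},\qquad \lambda>0.
\]
I would establish this via a Vitali-type covering lemma: given any finite family of open balls, one can extract a pairwise disjoint subfamily whose $3$-fold dilates cover the union. Applying this to an arbitrary compact subset of the level set $\{Mg>\lambda\}$ with the obvious balls realizing $A_r|g|>\lambda$, summing and using the disjointness gives the bound with $C_n=3^n$. This is the main technical obstacle, but it is classical.

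With the maximal inequality in hand, I would approximate: continuous functions with compact support are dense in $L^1(\mathbb{R}^n)$, so for each $\varepsilon>0$ write $g=f+h$ with $f\in C_c(\mathbb{R}^n)$ and $\|h\|_{L^1}<\varepsilon$. For the continuous piece the conclusion $A_rf(x)\to f(x)$ is immediate from uniform continuity on compact sets, so $\Omega f\equiv 0$ and consequently $\Omega g=\Omega h$ pointwise. The bound $\Omega h\leqslant Mh+|h|$ together with the weak-type inequality and Chebyshev's inequality yields, for every $\lambda>0$,
\[
\bigl|\{\Omega g>\lambda\}\bigr|\leqslant \bigl|\{Mh>\tfrac{\lambda}{2}\}\bigr|+\bigl|\{|h|>\tfrac{\lambda}{2}\}\bigr|\leqslant \frac{2(C_n+1)}{\lambda}\|h\|_{L^1}<\frac{2(C_n+1)\varepsilon}{\lambda}.
\]
Letting $\varepsilon\to 0$ gives $|\{\Omega g>\lambda\}|=0$ for every $\lambda>0$, and taking a countable union over $\lambda=1/k$ shows $\Omega g=0$ a.e., i.e.\ $A_rg(x)\to g(x)$ a.e., which completes the proof.
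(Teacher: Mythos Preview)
Your outline is correct and is precisely the classical argument via the Hardy--Littlewood maximal inequality and density of $C_c$ in $L^1$; this is exactly the proof given in Folland, which is all the paper does here (it simply cites \cite{Folland} and gives no independent argument). One cosmetic remark: you are using the symbol $\Omega$ both for the domain and for your oscillation functional, which in this paper's context would be confusing---rename the latter to avoid the clash.
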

\begin{proof}
See \cite{Folland} for a proof of this result.
\end{proof}

We are now ready to state and proof the main result of this section.

\begin{theorem}\label{theo:Diff} If $f\colon \mathbb{R}\to \mathbb{R}$ is a differentiable and globally Lipschitz continuous function and its Nemystkii operator $f^e$ is Fr\'{e}chet differentiable at some point $u_0\in L^p(\Omega)$ then there exist $a,b\in \R$ such that $f(s)=a s + b$ for all $s\in \mathbb{R}$.
\end{theorem}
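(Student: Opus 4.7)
The plan is to combine Lemma \ref{Laux-Gateaux} with the Lebesgue Differentiation Theorem, probing the Fréchet derivative with characteristic functions of small balls. By Lemma \ref{Laux-Gateaux}, $[Df^e(u_0)h](x)=f'(u_0(x))h(x)$ a.e.\ in $\Omega$. I would fix an interior point $x_0\in \Omega$ (with $B_{r_0}(x_0)\subset\Omega$ for some $r_0>0$), a real number $t\neq 0$, and set
\[
h_r:=t\,\chi_{B_r(x_0)}\in L^p(\Omega),\qquad 0<r<r_0,
\]
so that $\|h_r\|_{L^p}^p=|t|^p|B_r(x_0)|\to 0$ as $r\to 0^+$.

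The definition of the Fréchet derivative then forces
\[
\frac{\|f^e(u_0+h_r)-f^e(u_0)-Df^e(u_0)h_r\|_{L^p}^p}{\|h_r\|_{L^p}^p}
=\frac{1}{|t|^p|B_r(x_0)|}\int_{B_r(x_0)}g_t(x)\,dx\longrightarrow 0,
\]
where $g_t(x):=|f(u_0(x)+t)-f(u_0(x))-f'(u_0(x))t|^p$; the key cancellation is that the integrand vanishes outside $B_r(x_0)$, so the $L^p$-norm collapses to an integral over $B_r(x_0)$ after the explicit formula for $Df^e(u_0)$. Since $f$ is globally Lipschitz its derivative is bounded, hence $g_t\in L^\infty_{\rm loc}(\Omega)\subset L^1_{\rm loc}(\Omega)$. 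Applying Theorem \ref{LDT} to $g_t$, the ball averages converge to $g_t(x_0)$ for a.e.\ $x_0$, so the display above yields $g_t(x_0)=0$ for a.e.\ $x_0\in\Omega$, i.e.\
\[
f(u_0(x_0)+t)=f(u_0(x_0))+f'(u_0(x_0))\,t \quad \text{for a.e. } x_0\in\Omega.
\]

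Next I would take the countable intersection over $t\in\mathbb{Q}$ of these full-measure sets, obtaining a single set $E\subset\Omega$ of full measure on which the displayed identity holds for every rational $t$; continuity of $f$ upgrades this to every $t\in\mathbb{R}$ for all $x_0\in E$. Finally, choosing any $x_0\in E$ and writing $s_0=u_0(x_0)$, $a=f'(s_0)$, $b=f(s_0)-a s_0$, the relation $f(s_0+t)=f(s_0)+at$ valid for all $t\in\mathbb{R}$ becomes, after the substitution $s=s_0+t$, the affine formula $f(s)=as+b$ for every $s\in\mathbb{R}$.

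The main obstacle is only conceptual: one must recognize that the Fréchet-differentiability inequality, when tested against $h_r=t\chi_{B_r(x_0)}$, gives precisely the normalized ball-average of $g_t$, so that Lebesgue differentiation converts an integral-average vanishing statement into a pointwise vanishing statement. Once this link is made, no further estimates beyond the Lipschitz bound on $f'$ (to place $g_t$ in $L^1_{\rm loc}$) are needed.
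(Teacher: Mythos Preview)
Your proof is correct and follows essentially the same approach as the paper: test the Fr\'echet quotient against $t\chi_{B_r(x_0)}$, recognize the resulting expression as a ball average of $g_t$, and invoke the Lebesgue Differentiation Theorem. The paper organizes the argument as a contradiction (assume $g_{s_0}(x_0)\neq 0$ at a Lebesgue point and show the Fr\'echet quotient fails to vanish), whereas you argue directly that $g_t=0$ a.e.; your added step of intersecting over $t\in\mathbb{Q}$ and using continuity of $f$ to pass to all $t\in\mathbb{R}$ is a detail the paper leaves implicit.
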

\begin{proof} Define $g_s (y)= |f(u_0(y)+s)-f(u_0(y))-f'(u_0(y))s|^p$ for each $s\in \mathbb{R}$ and $y\in \Omega$. From Theorem \ref{LDT} it follows that, fixed $s\in \mathbb{R}$, we have
$$
\lim_{r\to 0^+}A_r g_s(x)= g_s(x) \hbox{ for all } x\in \Omega\backslash E_s,
$$
where $E_s$ is a zero Lebesgue measure set. If $g_s(x)=0$ for all $s\in \R$ and a.e. in $\Omega$ we have $f(u_0(x)+s)=f(u_0(x))+f'(u_0(x))s$ for all $s\in \mathbb{R}$ and a.e. in $\Omega$ and the result is proved.

If for some $s_0\neq 0$ and $x_0\in \Omega\backslash E_{s_0}$we have  $g_{s_0}(x_0)\neq 0$,  let $r>0$ be such that $B_r(x_0)\subset \Omega$ and $u_r=s_0 \, \chi_{_{B_r(x_0)}}$. Since $f^e$ is Fr\'{e}chet differentiable at $u_0$, Lemma \ref{Laux-Gateaux} implies that $[Df^e(u)h](x)= f'(u(x))h(x)$ a.e. in $\Omega$ and we have
\begin{equation*}
\begin{aligned}
\frac{1}{\|u_r\|^p_{L^p(\Omega)}}\int_\Omega  & |f(u_0(y)+u_r(y))-f(u_0(y)) - f'(u_0(y))u_r(y)|^p dy\\
&=  \frac{1}{\|u_r\|^p_{L^p(\Omega)}} \int_{B_r(x_0)} |f(u_0(y)+s_0)-f(u_0(y)) - f'(u_0(y))s_0|^p dy\\
&=\frac{1}{|s_0|^p} \ A_{r}g_{s_0}(x_0) \to  \frac{g_{s_0}(x_0)}{|s_0|^p} \hbox{ as } r\to 0^+,
\end{aligned}
\end{equation*}
and since $g_{s_0}(x_0)\neq 0$ we obtain a contradiction with the Fr\'echet-differentiability of $f^e$ at $u_0$.
\end{proof}
\end{appendix}

% Bibliografia
\bibliographystyle{plain}
\bibliography{banco}

\end{document}